\setlist[enumerate]{leftmargin=1.8em}
\setlist[itemize]{leftmargin=1.8em}
\definecolor{green}{rgb}{0,0.8,0} % Redefines the color green.
\newtheorem{theorem}{Theorem}[section]
\newtheorem{corollary}[theorem]{Corollary}
\newtheorem{lemma}[theorem]{Lemma}
\newtheorem{proposition}[theorem]{Proposition}
\newenvironment{customthm}[1]
{\innercustomthm}
{\endinnercustomthm}
\theoremstyle{definition}
\theoremstyle{remark}
\newtheorem{remark}[theorem]{Remark}
\numberwithin{equation}{section}
\newcommand{\nrm}[1]{\Vert#1\Vert}
\newcommand{\tld}[1]{\widetilde{#1}}
\newcommand{\nnrm}[1]{{\vert\kern-0.25ex\vert\kern-0.25ex\vert #1 
		\vert\kern-0.25ex\vert\kern-0.25ex\vert}}
\newcommand{\dist}{\mathrm{dist}\,}
\newcommand{\supp}{{\mathrm{supp}}\,}
\newcommand{\lap}{\Delta}
\newcommand{\rd}{\partial}
\newcommand{\nb}{\nabla}
\newcommand{\alp}{\alpha}
\newcommand{\Gmm}{\Gamma}
\newcommand{\dlt}{\delta}
\newcommand{\kpp}{\kappa}
\newcommand{\lmb}{\lambda}
\newcommand{\tht}{\theta}
\newcommand{\omg}{\omega}
\newcommand{\Omg}{\Omega}
\newcommand{\varep}{\varepsilon}
\newcommand{\bfe}{{\bf e}}
\newcommand{\bfp}{{\bf p}}
\newcommand{\bfF}{{\bf F}}
\newcommand{\bfG}{{\bf G}}
\newcommand{\bfK}{{\bf K}}
\newcommand{\bbR}{\mathbb R}
\newcommand{\calR}{\mathcal R}
\begin{document}

	\title{Stability for multiple Lamb dipoles}

	\author{Ken Abe}
	\address{Department of Mathematics, Graduate School of Science, Osaka Metropolitan University, 3-3-138 Sugimoto, Sumiyoshi-ku Osaka, 558-8585, Japan.}
	\email{kabe@omu.ac.jp}
	
	\author{In-Jee Jeong} 
	\address{School of Mathematics, Korea Institute for Advanced Study, 85 Hoegi-ro, Seoul 02455, Republic of Korea.}
	\email{ijeong@kias.re.kr}
	
	\author{Yao Yao}
	\address{Department of Mathematics, National University of Singapore, Block S17, 10 Lower Kent Ridge Road, Singapore, 119076, Singapore.}
	\email{yaoyao@nus.edu.sg}

	\date{\today}

	\renewcommand{\thefootnote}{\fnsymbol{footnote}}
	\footnotetext{\emph{2020 AMS Mathematics Subject Classification:} 76B47, 35Q35, 35B40}
	\footnotetext{\emph{Key words: vortex stability, Lamb dipole, variational principle, Lagrangian bootstrapping, multi-soliton stability} }
	\renewcommand{\thefootnote}{\arabic{footnote}}

	\begin{abstract} 
		In the class of nonnegative vorticities on the half-plane, we establish the Lyapunov stability of finite sums of Lamb dipoles under the initial assumptions that the dipoles are sufficiently separated and that the faster dipoles are positioned to the right of the slower ones. Our approach combines sharp energy estimates near the Lamb dipoles with a Lagrangian bootstrapping scheme, enabling us to quantify the exchanges of circulation, enstrophy, impulse, and energy between various parts of the solution. The strategy of the proof is robust, and we present several potential extensions of the result. 
	\end{abstract}
	
	\maketitle 
	
	%	\tableofcontents
	
	\date{\today}         
	
	\section{Introduction}\label{sec:intro}
	
	In this paper, we are concerned with long-time dynamics of \textbf{nonnegative vorticities} in the upper half-plane $\bbR^2_+ = \left\{ x = (x_1,x_2) : x_{2} \ge 0 \right\}$, evolving by the incompressible Euler equation: \begin{equation}\label{eq:2D-Euler}
		\left\{
		\begin{aligned}
			\rd_t \omg + u \cdot \nb \omg = 0, & \\
			u = \nb^{\perp} \lap^{-1} \omg ,&
		\end{aligned}
		\right.
	\end{equation}  where $\nb^\perp = (-\rd_{x_2}, \rd_{x_1})^\top$ and $\lap^{-1}$ is the Dirichlet Laplacian inverse in $\bbR^2_+$. We shall always assume that the initial data $\omg_{0} = \omg(t=0)$ belongs to $L^{1} \cap L^{\infty}(\bbR^2_+)$, so that there is a unique global-in-time solution by Yudovich theory \cite{Y1}. 
	
	Although global well-posedness is well-known, the long-time dynamics of \eqref{eq:2D-Euler} is a highly challenging problem: while the vorticity seems to generally concentrate on just a few points as $t \to \infty$  (\cite{McW,BoVe,DE,CMV}), at the same time, it is known that complicated quasi-periodic motion can persist (\cite{Kha,BHM,CrF,EPT,HaRo,GSIP}). Regarding this problem, the setting of nonnegative vortices in $\bbR^2_+$ could offer a simplification, thanks to the following monotone behavior (\cite{ISG99,Iftimie1}):  \begin{equation}\label{eq:x1-mom-mon}
		\begin{split}
			c_{0}(\omg_{0}) \le \frac{d}{dt} \int_{\bbR^2_+} x_{1}\omg(t,x)\,dx =  \int_{\bbR^2_+} u_{1}(t,x)\omg(t,x)\,dx \le C_{0}(\omg_{0}) 
		\end{split}
	\end{equation} where $c_{0}(\omg_{0}) \le C_{0}(\omg_{0})$ are strictly positive constants (unless $\omg_{0}$ is identically zero) depending only on $\omg_{0}$. That is, on the vorticity support, the ``fluid particles'' are drifting to the right in average with $O(1)$ speed:  then it is conceivable that if the vorticity consists of a few separated vortices, the faster ones will escape to spatial infinity earlier than the slower ones, making the interaction smaller as $t\to\infty$. In the current work, we present several definite results in this direction, namely forward-in-time  stability and spatial separation of multi-vortex solutions. 
	
	Let us recall a few important conserved quantities for \eqref{eq:2D-Euler}. Any $L^{p}$ norm ($p \in [1,\infty]$), as well as the distribution function of $\omg(t,\cdot)$ is preserved in time, since the velocity is divergence-free. In relation to the Lamb dipoles, the \emph{enstrophy} $$K[\omg(t,\cdot)] := \nrm{\omg(t,\cdot)}_{L^{2}(\bbR^2_+)}^{2},$$ which is simply the square of the $L^{2}$ norm, plays a particularly important role. Furthermore, we introduce the \emph{impulse}  (the $L^{1}$ norm with weight $x_{2}$)  $$\mu[\omg(t,\cdot)] := \nrm{\omg(t,\cdot)}_{L^1_*} := 
	\int_{\bbR^2_+} x_{2}|\omg(t,x)| dx,$$ and the \emph{kinetic energy} \begin{equation*}
		\begin{split}
			E[\omg(t,\cdot)] := -\frac12 \int_{\bbR^2_+} \omg(t,x) (\lap^{-1}\omg)(t,x) dx = \frac12 \int_{\bbR^2_+} |u(t,x)|^{2} dx . 
		\end{split}
	\end{equation*} The kinetic energy is conserved by \eqref{eq:2D-Euler}. Furthermore, the integral $\int_{ \bbR_+^2 } x_{2}\omg(t,x) dx$  is also conserved by \eqref{eq:2D-Euler}, which coincides with our convention of the impulse under the sign assumption $\omg(t,\cdot) \ge 0$.

	\subsection{Lamb dipoles and the main result}\label{subsec:Lamb-Main}
	
	The collection of Lamb dipoles constitutes a two parameter family of traveling waves to \eqref{eq:2D-Euler}. 
	We set the \emph{normalized Lamb dipole} to be 
	\begin{equation}   \label{eq:lamb-norm}
		\overline{\omega}_{Lamb}(x):=  \frac{-2c_L}{    J_0(c_L)} \, J_1(c_L r) \mathbf{1}_{[0,1]}(r) \sin(\tht) 
	\end{equation}  in the polar coordinate system $(r,\tht)$. 
	Here, $J_{m}(r)$ is the $m$-th order Bessel function of the first kind and $c_L=3.83\dots$ is the first positive zero of $J_1(r)$. %Since $g_{L}(r) > 0$ on $(0,1)$ and vanishes for $r=0, 1$, $ \overline{\omega}_{Lamb}(x)$ is supported on $|x| \le 1$. 
	While we defer the details to \S \ref{subsec:Lamb}, $\overline{\omega}_{Lamb}(x - t \mathbf{e}_{1})$ is a compactly supported solution to \eqref{eq:2D-Euler} whose impulse, enstrophy, and energy are given by $\pi, \pi c_{L}^{2}$, and $\pi$, respectively. See Figure~\ref{fig_lamb1} for an illustration of the vorticity and streamline of a normalized Lamb dipole in the moving frame.\footnote{The illustration is shown in the whole plane $\mathbb{R}^2$. Due to the odd-in-$x_2$ symmetry, if we only take the portion in the upper half plane, it becomes a traveling wave in $\mathbb{R}^2_+$.}

	A two-parameter family of Lamb dipoles is obtained by scaling \eqref{eq:lamb-norm}: for any $\kpp, \mu > 0$, the Lamb dipole with enstrophy $\kpp^{2}$ and impulse $\mu$ is given by 
	\begin{equation}\label{eq:lamb-rescale-def}
		\omega_{Lamb}^{\kpp,\mu}(x) :=A \,\overline{\omega}_{Lamb}\left(R^{-1}x\right), \qquad 	
		A(\kappa,\mu)= \left(\frac{\kappa^{3}}{ \sqrt{\pi} c_{L}^{3} \mu}\right)^{1/2}, \quad R(\kappa,\mu)=\left( \frac{c_{L}\mu}{\sqrt{\pi}\kpp} \right)^{1/2}, 
	\end{equation} where $A$ refers to the ``amplitude'' and $R$ ``radius'' of the rescaled Lamb dipole. With this notation, we may express the normalized Lamb dipole by $\overline{\omega}_{Lamb} = \omg_{Lamb}^{ \sqrt{\pi}c_{L} , \pi }$.

	\begin{figure}[htbp]
		\includegraphics[scale=1]{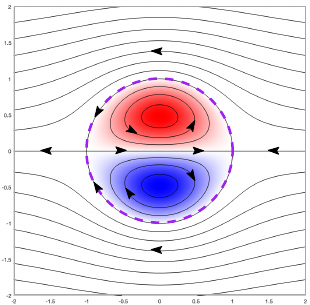} 
		\caption{Vorticity and streamlines of a normalized Lamb dipole in the moving frame} \label{fig_lamb1} 
	\end{figure}

	We are now ready to state our first main result. Given $N \ge 1$,  positive $N$-vectors $\boldsymbol{\bar{\kpp}} = \{ \bar{\kpp}_{i} \}_{i=1}^{N}$, $\boldsymbol{\bar{\mu}} = \{\bar{\mu}_{i} \}_{i=1}^{N}$ (that is, $\bar{\kpp}_{i}, \bar{\mu}_{i} > 0$ for all $1 \le i \le N$) and $\mathbf{\bar{p}} = (\bar{p}_{1},\cdots,\bar{p}_{N}) \in \bbR^{N}$, we introduce the $N$ Lamb dipole function as the linear superposition of $N$ Lamb dipoles: \begin{equation}\label{eq:N-dipole-def}
		\begin{split}
			\overline{\omg}^{\boldsymbol{\bar\kpp}, \boldsymbol{\bar\mu}}_{Lamb, \mathbf{\bar{p}}}(x) := \sum_{i=1}^{N} \omg_{Lamb}^{\bar\kpp_{i},\bar\mu_{i}}(x - \bar{p}_{i} \mathbf{e}_{1} ).
		\end{split}
	\end{equation} The velocity of the $i$-th dipole  $\omg_{Lamb}^{\bar\kpp_{i},\bar\mu_{i}}$ is given by $\bar\kpp_{i}/(\sqrt{\pi}c_{L})$ (note that $\kpp$ has the unit of velocity) and will be denoted by $\bar{V}_{i}$ for simplicity. See Figure~\ref{fig_multi_lamb} for an illustration of a superposition of three Lamb dipoles with ordered velocities $\bar V_1 > \bar V_2 > \bar V_3$.

	Given two norms $X, Y$ defined for functions on $\bbR^2_+$, we shall write $\nrm{f}_{X \cap Y} := \nrm{f}_{X} + \nrm{f}_{Y}$.

	\begin{figure}[htbp]
		\includegraphics[scale=1]{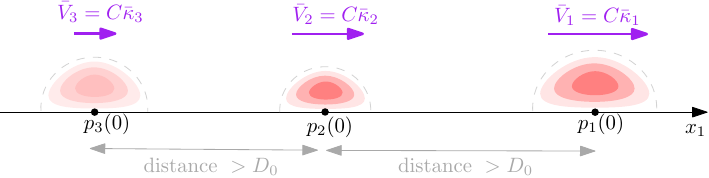} 
		\caption{\label{fig_multi_lamb} An illustration of a superposition of three Lamb dipoles with ordered velocities $\bar V_1 > \bar V_2 >\bar V_3$ that are far apart. (Note that Theorem~\ref{thm:N-Lamb-dipoles} requires the initial condition to be close to a superposition where the velocities are ordered and the centers are far apart.) } 
	\end{figure}
	
	\begin{customthm}{A}[Stability of linear superposition of $N$ Lamb dipoles]\label{thm:N-Lamb-dipoles}
		Fix $M>0$, $N\ge1$, and positive $N$-vectors $\boldsymbol{\bar{\kpp}}, \boldsymbol{\bar{\mu}}$, with $\boldsymbol{\bar{\kpp}}$ satisfying $\bar\kpp_{1} > \bar\kpp_{2} > \cdots > \bar\kpp_{N}.$  Then, there exists $\varep_{0} > 0$ depending only on $N, M, \boldsymbol{\bar{\kpp}}, \boldsymbol{\bar{\mu}}$ with the property that for any $0<\varep<\varep_{0}$, there exist $\dlt_{0}, D_{0} > 0$ depending on $\varepsilon, N, M, \boldsymbol{\bar\kpp}, \boldsymbol{\bar\mu}$ such that as long as \begin{equation*}
			\begin{split}
				\bar{p}_{i}  > \bar{p}_{i+1}  + D_{0} \qquad \mbox{for all} \qquad 1 \le i \le N - 1 , 
			\end{split}
		\end{equation*} we have the following stability result. For any initial data $\omg_{0} \ge 0$ on $\bbR^{2}_{+}$ with \begin{equation}\label{eq:ini-assumption}
			\begin{split}
				\left\Vert \omg_{0} - \overline{\omg}^{\boldsymbol{\bar\kpp}, \boldsymbol{\bar\mu}}_{Lamb, \bar{\bfp}} \right\Vert_{L^{2} \cap L^1_*} \le \dlt_{0}, \quad \nrm{\omg_{0}}_{L^{1} \cap L^{\infty}} \le M,  
				\quad\text{ and } \quad |\supp(\omega_0)|\le M, 
			\end{split}
		\end{equation} the corresponding solution $\omg(t,\cdot)$ to $\omg_{0}$ satisfies \begin{equation}\label{eq:varep-close}
			\begin{split}
				\left\Vert \omg(t,\cdot) - \overline{\omg}^{\boldsymbol{\bar\kpp}, \boldsymbol{\bar\mu}}_{Lamb, \mathbf{p}(t)} \right\Vert_{L^{2} \cap L^1_*  } \le \varepsilon \qquad\mbox{for all} \qquad t > 0 
			\end{split}
		\end{equation} for some time-dependent vector $\mathbf{p}(t) = (p_{1}(t),\cdots,p_{N}(t))$ satisfying the estimate \begin{equation}\label{eq:p-est}
			\begin{split}
				| p_{i}(t) - \bar{p}_{i} - \bar{V}_{i}\, t| \le C\varep^{1/2}(1+t), \qquad 1 \le i \le N, 
			\end{split}
		\end{equation} for some $C = C(M, N,  \boldsymbol{\bar{\kpp}}, \boldsymbol{\bar{\mu}})>0$. 
	\end{customthm}

	\begin{remark} A few remarks are in order. \begin{itemize}
			\item When $N = 1$ (single Lamb dipole case), global-in-time stability was obtained in \cite{AC2019}, and we shall discuss this case in more detail in \S \ref{subsubsec:Lamb-stability}. The ``shift estimate'' of the form \eqref{eq:p-est} was obtained later in \cite{CJ-Lamb}, with improvements in \cite{JYZ}. In the case $N = 1$, $\varepsilon_{0}$ does not depend on $\nrm{\omg_{0}}_{L^\infty}$. For $N \ge 2$, the $L^\infty$ bound of $\omg_{0}$ is necessary for estimating $\nrm{u}_{L^\infty}$ (cf. Lemma \ref{lem:vel-L-infty}), since our proof is dynamical in nature. (To this end,  a bound on $\nrm{\omg_{0}}_{L^p}$ for any $p>2$ suffices.) 
			\item When $N \ge 2$, the statement is  nontrivial even when the initial data is given \textit{exactly} as the linear superposition of Lamb dipoles. In fact, the proof in this case is not any simpler. Note that the velocity generated by each Lamb dipole is not compactly supported; it pushes down ones in the back, and lifts up ones in the front (cf. Figure \ref{fig_lamb1}). Furthermore, it is not difficult to see that the condition $\bar\kpp_{1} > \bar\kpp_{2} > \cdots > \bar\kpp_{N}$ is essential. Specifically, if a faster Lamb dipole is initially located to the left of a slower one, then by assuming \eqref{eq:varep-close} for all $t \ge 0$, it can be deduced that the faster Lamb dipole will eventually catch up to the slower one and ``collide,'' which would contradict \eqref{eq:varep-close}.\footnote{In this contradiction argument, \eqref{eq:p-est} does not need to be assumed.} Switching the initial order of the Lamb dipoles has little effect on the coercive conserved quantities of the Euler equations, clearly indicating that Theorem \ref{thm:N-Lamb-dipoles} cannot be proved solely through frozen-time variational considerations. 
			\item Note that the condition $\bar\kpp_{1} > \bar\kpp_{2} > \cdots > \bar\kpp_{N}$ only says the enstrophy (which is proportional to velocity) of the Lamb dipoles is ordered; we do not require their amplitude, impulse or radii of support to be ordered. In particular, a Lamb dipole with a larger radius is not necessarily faster than another one with a smaller radius; see the relation in \eqref{eq:lamb-rescale-def}. 
			
			\item The norm $L^{2} \cap L^1_* $ in \eqref{eq:ini-assumption}--\eqref{eq:varep-close} can be replaced by $L^{2} \cap L^1_*  \cap L^{1}$. In particular, any initial datum $\omg_{0}$ satisfying the assumptions above {gives an} example where the rescaled vorticity $t^{2} \omg(t, tx)$ {does not converge to a single Dirac delta measure} in the sense of measures as $t \to \infty$. {This is closely related to} a theory developed by Iftimie--Lopes Filho--Nussenzveig Lopes \cite{ILL2003}, see \S \ref{subsubsec:ILN} below for details.
			\item In the pioneering work of Martel--Merle--Tsai \cite{MMT02}, a similar statement which also includes the asymptotic stability was obtained for linear superpositions of the gKdV solitons. Unfortunately, the asymptotic stability for the Lamb dipoles is not known even in the case $N = 1$. One notable difference is that the family of Lamb dipoles (as well as any other traveling waves for Euler) comes in a more than two-parameter family, while gKdV solitons are just parameterized by the speed. This presents serious complications, and of course the Euler dynamics is very different from that for gKdV; see \S \ref{subsubsec:EIE} for the main challenges arising in our case.  
		\end{itemize}
	\end{remark}
	
	The strategy we develop is quite robust, and we believe that Theorem \ref{thm:N-Lamb-dipoles} can be generalized in several ways, in particular replacing each Lamb dipole with another orbitally stable one. We shall discuss potential generalizations below in \S \ref{subsec:half-plane}, after explaining the ideas and difficulties of proof. To demonstrate flexibility of the method, let us present a simple result which says that one can ``separate out'' a Lamb dipole from the solution, with the sole assumption that the other part is initially slower than the Lamb dipole under all possible rearrangements.  
	
	To state the result, given an essentially bounded $\omg \ge 0$ on $\bbR^2_+$, we define $V_{max}[\omega]$ as the maximum velocity under any rearrangement of $\omega$, namely \begin{equation*}
		\begin{split}
			V_{max}[\omg] = \sup_{g \in \calR[\omg]} \nrm{ \nb^\perp\lap^{-1}(g) }_{L^{\infty}}
		\end{split}
	\end{equation*} where $\calR[\omg]$ is the set of rearrangements of $\omg$ on $\bbR^2_+$: \begin{equation}\label{eq:rearrange}
		\begin{split}
			\calR[\omg] = \left\{ g \, : \, \left| \{ x \in \mathbb{R}^2_+ \, : \, g(x) > \lmb \} \right|  = \left| \{ x \in \mathbb{R}^2_+ \, : \, \omg(x) > \lmb \} \right| \mbox{ for all } \lmb > 0  \right\}. 
		\end{split}
	\end{equation} 
	
	\begin{customthm}{B}[Separating out a Lamb dipole from slower part] \label{thm:B}
		For any $\varep>0, \alpha \in (0,\frac{\pi}{2}]$ and $M>0$, there exist $\delta_0, D_0>0$ depending only on $\varepsilon, \alpha, M$, such that the following holds.
		
		Consider any initial data $\omega_0\geq 0$ on $\mathbb{R}_+^2$ of the form $\omega_0 = \omega_{0l} + \omega_{0r}$, where $\omega_{0r}$ satisfies the following for some $D>D_0$:
		\[
		\|\omega_{0r} - \bar\omega_{Lamb}(\cdot-D\mathbf{e}_1)\|_{L^2 \cap L^1_*  } \leq \delta_0, \quad \|\omega_{0r}\|_{L^1\cap L^\infty} \le M, 
		\quad\text{ and } \quad |\supp(\omega_{0r})|\le M, 
		\]
		and $\omega_{0l}$ satisfies $\supp\omega_{0l} \subset\{x_1 < x_2 \cot\alpha\}$, and $V_{max}[\omega_{0l}] < \sin\alpha$.  
		Then, with $V_{avr}$ and $\omega_r(t,\cdot)$ defined by \begin{equation*}
			\begin{split}
				V_{avr} := \frac{1}{2} \left(1+\frac{ V_{max}[\omega_{0l}]  }{\sin\alpha} \right)<1 \quad\text{ and }\quad \omega_r(t,\cdot):= \omega(t,\cdot) \mathbf{1}_{\{x_{1} > x_{2}\cot\alpha + V_{avr} t\}},
			\end{split}
		\end{equation*} we have the following stability: for some $p(t)$ satisfying $|p(t)-t-D|\le C\varepsilon^{1/2}(1+t)$,
		\[ \| \omega_r(t,\cdot) - \bar\omega_{Lamb}(\cdot-p(t) \mathbf{e}_1)\|_{L^2\cap L^1_* } \le \varep \quad \text{ for all }  \quad t \ge 0. 
		\]  
	\end{customthm}

	\begin{center}
		\begin{figure}[htbp]
			\includegraphics[scale=1]{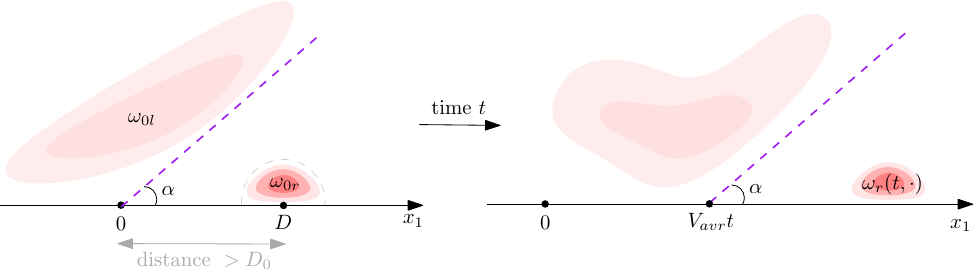} 
			\caption{\label{fig_thmB}An illustration of the initial data of Theorem~\ref{thm:B} (left figure), and the solution at time $t$ (right figure). Here $\omega_{0r}$ and $\omega_{r}(t,\cdot)$ are the vorticities on the yellow background at time $0$ and $t$ respectively.} 
		\end{figure}
	\end{center}
	See Figure~\ref{fig_thmB} for an illustration of the assumptions and conclusions of Theorem~\ref{thm:B}.
	\begin{remark}\label{rem:B}
		We give a few remarks regarding Theorem \ref{thm:B}. \begin{itemize}
			\item The point of this result is that we do not require any ``detailed'' information on the part $\omg_{0 l}$ for separating out a Lamb dipole to the right. It is expected that if furthermore $\omg_{0 l}$ is close to any of the ``locally energy maximizing'' dipole, then this information also propagates for all $t \ge 0$; we leave this extension to a future work.
			\item This result is stated for simplicity with the normalized Lamb dipole $\bar\omg_{Lamb}$ defined in \eqref{eq:lamb-norm}, and a similar result holds for all the other Lamb dipoles by appropriately rescaling the assumptions. 
			\item The condition $V_{max}[\omega_{0l}] < \sin\alpha$ is sharp in view of particle trajectories: any fluid particle starting from the support of $\omg_{0 l}$ is not able to reach the line $\{x_1 = x_2 \cot\alpha + V_{avr}t \}$ for $t \ge 0$ by the velocity that $\omg_{0 l}$ and its all possible rearrangements generate.  
			\item It is well-known that 
			$V_{max}[\omega] \le C_{0}\|\omega\|_{L^\infty}^{1/2}\|\omega\|_{L^1}^{1/2}$ holds 
			for some universal constant $C_{0}>0$ \cite{ILL2003,Iftimie1}. Therefore, Theorem \ref{thm:B} is applicable if  $\omg_{0 l} \in L^{\infty}$ is close to a vortex patch $\mathbf{1}_{ \Omg_{0} }$ with $\Omg_{0} \subset\{x_1 < x_2 \cot\alpha\}$  and $|\Omg_{0}|$ sufficiently small. This also shows that for Theorem \ref{thm:B} to hold, it is not necessary for the slower part $\omg_{0 l}$ to be located to the ``left'' of the Lamb dipole. 
		\end{itemize}
	\end{remark}

	\subsection{Overall strategy and difficulties}\label{subsec:difficulty}
	
	\subsubsection{Stability of a single Lamb dipole}\label{subsubsec:Lamb-stability}
	
	The first ingredient in our proof is the stability for a single Lamb dipole from \cite{AC2019,ACJ}. %We present an equivalent reformulation introduced in \cite{CJY} which will be much more convenient for us. 
	To describe the setup, we introduce the admissible class 
	\begin{equation}\label{eq:admissible}
		\widetilde{\mathcal{A}}_{\kappa,\mu}:=\left\{
		\omega \ge 0 \mbox{ on } \bbR^2_+ \,:\, 
		\, \nrm{\omg}_{L^{2}(\bbR^2_+)} \le \kpp,\,\, \nrm{\omg}_{L^1_* (\bbR^2_+)} \le \mu  
		\right\}.
	\end{equation} Note that  $\omg_{Lamb}^{\kpp,\mu}\in \widetilde{\mathcal{A}}_{\kappa,\mu}$ by definition, and the main result of \cite{AC2019} (see Theorem \ref{thm:AC2019} below) says that $\omg_{Lamb}^{\kpp,\mu}$ is the energy maximizer in the class \eqref{eq:admissible}, unique up to shifts along the $x_{1}$-axis. This variational characterization of the Lamb dipoles implies Lyapunov stability for each of them. In this work, we are going to use the following version, which says that under tight $L^{2}, L^1_* $ bounds, any vorticity having similar energy with the corresponding Lamb dipole must be close to it. 
	\begin{proposition}\label{prop:Lamb-energy-estimate}
		Fix some $\kpp,\mu>0$ and $M > 0$. Then, for any $\varep_{1}>0$, there exists $\dlt_{1} = \dlt_{1}(\kpp,\mu,M,\varep_{1})>0$ such that the following holds: if $\omg \ge 0$, $\nrm{\omg}_{L^{1}(\bbR^2_+)} \le M$ satisfies  \begin{equation*}
			\begin{split}
				\omg \in \widetilde{\mathcal{A}}_{ \kpp +\dlt_{1}, \mu +\dlt_{1} } \qquad\mbox{and}\qquad E[\omg] \ge E[\omg_{Lamb}^{\kpp,\mu}] - \dlt_{1}, 
			\end{split}
		\end{equation*} then we have \begin{equation*}
			\begin{split}
				\inf_{\tau\in\bbR} \nrm{ \omg - \omg_{Lamb}^{\kpp,\mu}(\cdot - \tau \mathbf{e}_{1}) }_{L^{2} \cap L^1_*  (\bbR^2_+)} \le \varep_{1}.  
			\end{split}
		\end{equation*} 
	\end{proposition}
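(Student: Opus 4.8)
The plan is to derive this from the variational characterization of the Lamb dipole (Theorem~\ref{thm:AC2019}), which says that $\omg_{Lamb}^{\kpp,\mu}$ is, up to horizontal translation, the unique maximizer of $E$ over $\widetilde{\mathcal{A}}_{\kpp,\mu}$, via a concentration--compactness argument. I would argue by contradiction: if the statement fails for some $\varep_1 > 0$, there are $\dlt_n \to 0^+$ and $\omg_n \ge 0$ with $\nrm{\omg_n}_{L^1(\bbR^2_+)} \le M$, $\omg_n \in \widetilde{\mathcal{A}}_{\kpp + \dlt_n, \mu + \dlt_n}$ and $E[\omg_n] \ge E[\omg_{Lamb}^{\kpp,\mu}] - \dlt_n$, but $\inf_{\tau\in\bbR}\nrm{\omg_n - \omg_{Lamb}^{\kpp,\mu}(\cdot - \tau\mathbf{e}_1)}_{L^2 \cap L^1_*} \ge \varep_1$ for every $n$. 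A direct computation from \eqref{eq:lamb-rescale-def} gives $E[\omg_{Lamb}^{\kpp,\mu}] = \kpp\mu/(\sqrt\pi c_L)$, which is positive and (separately affine, hence) continuous in $(\kpp,\mu)$; so the $\omg_n$ are uniformly bounded in $L^1 \cap L^2$ with uniformly bounded impulse, $E[\omg_n] \to E[\omg_{Lamb}^{\kpp,\mu}]$, and this limit is a positive constant.

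The core step is a localization of the energy. Using the decay of the Dirichlet Green's function on $\bbR^2_+$ together with the uniform $L^1 \cap L^2$ and impulse bounds (ILN-type estimates as in \cite{ILL2003, Iftimie1, AC2019}), I would show that for every $\eta > 0$ there exist $h(\eta), L(\eta) > 0$ and centers $a_n \in \bbR$ such that the energy of $\omg_n$ carried by the region outside the box $\{|x_1 - a_n| \le L(\eta),\ 0 \le x_2 \le h(\eta)\}$ is at most $\eta$. Vanishing of the energy is excluded by the uniform positive lower bound on $E[\omg_n]$; vertical escape of the energy-carrying mass is excluded since the mass above height $h$ is $O(1/h)$ by the impulse bound and its energy contribution vanishes as $h\to\infty$ by the structure of the kernel; horizontal escape is absorbed into the choice of $a_n$. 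Finally, \emph{dichotomy} --- a splitting $\omg_n = \omg_n^{(1)} + \omg_n^{(2)}$ with supports drifting infinitely far apart, each carrying a nontrivial share of the enstrophy and impulse --- is excluded by near-maximality: the cross energy then tends to $0$ while $E[\omg_n^{(j)}] \le \kpp_j\mu_j/(\sqrt\pi c_L)$ by Theorem~\ref{thm:AC2019}, where (up to $\dlt_n$) $\kpp_1^2 + \kpp_2^2 = \kpp^2$ and $\mu_1 + \mu_2 = \mu$, and the strict inequality $\kpp_1\mu_1 + \kpp_2\mu_2 \le \sqrt{\kpp_1^2+\kpp_2^2}\sqrt{\mu_1^2+\mu_2^2} < \sqrt{\kpp_1^2+\kpp_2^2}\,(\mu_1+\mu_2)$ (valid when both $\mu_j > 0$, with a gap controlled by $\eta$) would force $E[\omg_n] \le E[\omg_{Lamb}^{\kpp,\mu}] - c(\eta)$, a contradiction.

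Given this localization, after replacing $\omg_n$ by $\omg_n(\cdot + a_n\mathbf{e}_1)$ and passing to a subsequence, $\omg_n \rightharpoonup \omg_\infty \ge 0$ weakly in $L^2(\bbR^2_+)$, and the sequence is tight. Since $\lap^{-1}$ maps $L^2$ compactly into $H^2_{loc}$, $\lap^{-1}\omg_n \to \lap^{-1}\omg_\infty$ strongly in $L^2_{loc}$, and together with tightness and the Green's-function tail bound this gives $E[\omg_n] \to E[\omg_\infty]$. Weak lower semicontinuity of $\nrm{\cdot}_{L^2}$ and of the impulse, plus Fatou for the $L^1$ norm, yield $\omg_\infty \in \widetilde{\mathcal{A}}_{\kpp,\mu}$ with $\nrm{\omg_\infty}_{L^1} \le M$, while $E[\omg_\infty] = \lim E[\omg_n] \ge E[\omg_{Lamb}^{\kpp,\mu}]$. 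Hence $\omg_\infty$ maximizes $E$ over $\widetilde{\mathcal{A}}_{\kpp,\mu}$, so by Theorem~\ref{thm:AC2019} $\omg_\infty = \omg_{Lamb}^{\kpp,\mu}(\cdot - \tau\mathbf{e}_1)$ for some $\tau\in\bbR$; in particular $\nrm{\omg_\infty}_{L^2} = \kpp$ and $\nrm{\omg_\infty}_{L^1_*} = \mu$. Since $\nrm{\omg_n}_{L^2} \to \kpp = \nrm{\omg_\infty}_{L^2}$, the weak $L^2$ convergence upgrades to strong; combined with tightness, the compact support of $\omg_\infty$, and $\nrm{\omg_n}_{L^1_*} \to \mu$, one also gets $\omg_n \to \omg_\infty$ in $L^1_*$. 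Thus $\omg_n(\cdot + a_n\mathbf{e}_1) \to \omg_{Lamb}^{\kpp,\mu}(\cdot - \tau\mathbf{e}_1)$ in $L^2 \cap L^1_*$, contradicting the lower bound $\varep_1$.

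The main obstacle is the localization/concentration--compactness step, specifically excluding dichotomy on the half-plane, where the energy is only approximately additive over well-separated supports --- the cross terms, which involve the reflected ``image'' vorticities, must be shown to vanish with the separation --- and where one has to track how the slightly enlarged constraints $\kpp + \dlt_n$, $\mu + \dlt_n$ interact with the strict superadditivity gap. Controlling the vertical escape of mass using only the impulse bound and the structure of the Dirichlet Green's function is the other point requiring care; the rest (weak limits, lower semicontinuity, and the upgrade to strong convergence) is routine once the localization is established.
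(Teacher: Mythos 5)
The paper does not prove this proposition itself---it simply cites \cite[Proposition 2.6]{CJY}, which in turn repackages the compactness statement of \cite[Theorems 1.3 and 1.5]{AC2019}, and those proofs are concentration--compactness arguments for almost-maximizing sequences of exactly the kind you outline, so your approach is essentially the one behind the cited result. Your sketch is sound in outline (the use of the sharp inequality $E \le C_{L}\kpp\mu$ together with the strict superadditivity gap to exclude dichotomy, and the upgrade from weak to strong $L^{2}$ convergence via convergence of norms, are all correct), with two caveats: the genuinely hard steps---vertical confinement and the approximate additivity of the energy over well-separated pieces using only the $L^{1}$, $L^{2}$ and impulse bounds, which is precisely why the hypothesis $\nrm{\omg}_{L^{1}} \le M$ appears---are exactly where \cite{AC2019} does its work, and your dichotomy case should be phrased so as to also cover a piece carrying a definite share of enstrophy but asymptotically vanishing impulse (or vice versa), which is handled the same way since that piece's energy then tends to zero while $\kpp_{1} < \kpp$ already produces the required gap.
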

	The proof of this proposition is given in \cite[Proposition 2.6]{CJY}, but again, this is just a reformulation of the compactness statement from \cite[Theorems 1.3 and 1.5]{AC2019}.
	
	\subsubsection{Decomposing the solution into $N$ pieces} To prove Theorem \ref{thm:N-Lamb-dipoles}, we shall decompose the solution $\omg(t,\cdot)$ appropriately into $N$ disjoint pieces, and apply Proposition \ref{prop:Lamb-energy-estimate} to each of them. There does not seem to be a ``canonical'' way to decompose the solution, and we simply used hard cutoffs: we take (for the precise definition, see \S \ref{subsec:decomp-N-vortices}) \begin{equation}\label{eq:decomp-P}
		\begin{split}
			\omg(t,\cdot) = \sum_{i=1}^{N} \omg_{i}(t,\cdot), \qquad \omg_{i}(t,\cdot) := \omg(t,\cdot) \mathbf{1}_{ \left\{  \frac{P_{i}(t) + P_{i+1}(t)}{2} \le x_{1} < \frac{P_{i-1}(t) + P_{i}(t)}{2} \right\} }(\cdot)
		\end{split}
	\end{equation} where $P_{i}(t) = \bar{p}_{i} + C_{L} \bar\kpp_{i}t = \bar{p}_{i} + \bar{V}_{i}t$ is the ``expected location'' of the $i$-th Lamb dipole for $1 \le i \le N$ when interactions are absent, and $P_{0}:=+\infty$ and $P_{N+1}:=-\infty$. We shall set $K_{i}(t) := \nrm{\omg_{i}(t)}_{L^{2}}^{2}$, $\mu_{i}(t) := \nrm{\omg_{i}(t)}_{L^1_* }$, and $E_{i}(t) := E[\omg_{i}(t)]$ for simplicity.  Note that enstrophy and impulse are both \textit{additive}: we have \begin{equation}\label{eq:conserved-sum}
		\begin{split}
			K_{1}(t) + \cdots + K_{N}(t) = \nrm{\omg_{0}}_{L^{2}}^{2} \quad\mbox{ and }\quad \mu_{1}(t) + \cdots + \mu_{N}(t)  = \nrm{\omg_{0}}_{L^1_* }
		\end{split}
	\end{equation} for all $t$. %({This is why we find the enstrophy a bit easier to work with, compared to the $L^{2}$ norm.}) 
	On the other hand, the energy has a positive ``interaction'' term $E_{inter}(t)$ (see \S\ref{subsubsec:energy} for the precise definition), so that the total energy satisfies $$E_{0} = E(t) = E_{1}(t) + \cdots + E_{N}(t) + E_{inter}(t) > E_{1}(t) + \cdots + E_{N}(t).$$

	%	for $(\kpp,\mu) = (\bar\kpp_{i},\bar\mu_{i})$}

For Proposition \ref{prop:Lamb-energy-estimate} to be applicable to each of  $\omg_{i}(t,\cdot)$, we need \textit{global-in-time} bounds \begin{equation}\label{eq:goal}
	\begin{split}
		K_{i}(t)   \le K_{i}(0) + C\dlt_{0}, \quad \mu_{i}(t)   \le \mu_{i}(0) + C\dlt_{0}, \quad  E_{i}(t) > E_{i}(0) - C\dlt_{0}
	\end{split}
\end{equation} for all $1 \le i \le N$, with some time-independent $C>0$. We emphasize that $\dlt_{0}$-smallness is needed (which is from initial data) and not just $\varep$-smallness, which comes from the conclusion \eqref{eq:varep-close}.\footnote{Even in the single Lamb dipole case $N = 1$, the quantitative relation between $\dlt_{0}$ and $\varepsilon$ is unknown, and in any case we expect $\dlt_{0} \ll \varep$, so we cannot rely on this $\varepsilon$-smallness to close estimates of the form \eqref{eq:goal}.} 	The immediate difficulty we face in applying Proposition \ref{prop:Lamb-energy-estimate} with each $\omg_{i}(t,\cdot)$ is that all of $K_{i}, \mu_{i}, E_{i}$ are time-dependent due to interactions. In fact, as we shall explain below, there are competing effects which could make each of $K_i(t)$ and $\mu_i(t)$ grow or decay.

\subsubsection{Point vortex case} To see whether \eqref{eq:goal} has any chance to work, it is natural to first consider the case when each Lamb dipole is replaced by a point vortex. This was indeed considered in the pioneering work \cite{ILL2003}. To be concrete, take an initial data of the form \begin{equation}\label{eq:ini-PV}
	\begin{split}
		\omg_{0} = \sum_{i = 1}^{N} \Gmm_{i} \dlt_{ X_{i}(0) } \mbox{ on } \bbR^2_+, \quad X_{i}(0) = ( p_{i}(0), h_{i}(0) ), 
	\end{split}
\end{equation} with $\Gmm_{i}, h_{i}(0) > 0$ and $p_{1}(0) > \cdots > p_{N}(0)$. In the absence of interactions, the $i$-th vortex translates horizontally to the right with uniform speed $V_{i}  := \Gmm_{i}/(4\pi h_{i}(0))$. Similarly with the assumptions of Theorem \ref{thm:N-Lamb-dipoles}, we take \begin{equation}\label{eq:ini-PV-assume}
	\begin{split}
		V_{1} > V_{2} > \cdots > V_{N}, \qquad p_{i}(0) - p_{i+1}(0) > D_{0} \gg 1, \quad 1 \le i \le N - 1. 
	\end{split}
\end{equation} If we now let \eqref{eq:ini-PV} evolve according to the $N$ vortex system on $\bbR^2_+$ (Kirchhoff--Helmholtz equation) under the assumptions \eqref{eq:ini-PV-assume}, then it can be proved that the corresponding solution \begin{equation*}
	\begin{split}
		\omg(t,\cdot) = \sum_{i = 1}^{N} \Gmm_{i} \dlt_{ X_{i}(t) } \qquad X_{i}(t) = ( p_{i}(t), h_{i}(t) ), 
	\end{split}
\end{equation*} is global in time and satisfies  \begin{equation}\label{eq:stability-PV}
	\begin{split}
		|h_{i}(t) - h_{i}(0)| \ll 1, \qquad \frac1{t}|p_{i}(t) - p_{i}(0) - V_{i}t| \ll 1
	\end{split}
\end{equation} for all $1 \le i \le N$ and $t \ge 0$.\footnote{{In \cite{ILL2003}, the proof was given only in the case $N = 2$. In the case of general $N$, probably the simplest way to prove it is to repeat the bootstrapping scheme used in the current work. Filamentation, which is the main enemy in the case of Lamb dipoles, is completely absent for point vortices and the proof is significantly simpler.}} Therefore, an analogue of \eqref{eq:goal} holds for sufficiently separated point vortices, although we do not really have a concept corresponding to the enstrophy in this case. The key observations which give \eqref{eq:stability-PV} are: if we take $D_{i}(t)$ to be the smallest distance from $X_{i}(t)$ to the other point vortices in $\bbR^2_+$, we have the interaction velocity decay $\lesssim (D_{i}(t))^{-2}$, as well as linear in time separation $D_{i}(t)- D_{i}(0) \gtrsim t$, which makes the integrated (from $t = 0$ to $+\infty$) effect of mutual interactions small, as long as $D_{i}(0)$ is large. This type of velocity decay and linear separation, whose precise version is given in Lemma \ref{lem:vel-decay} below, is indeed the key mechanism behind our main results as well. {However, despite the above analogue, the setting of our main results is different from the desingularization regime: the Lamb dipoles have $O(1)$ energy and enstrophy as long as they have bounded velocity and circulation (integral of the vorticity), while energy and enstrophy blow up in the desingularization regime.
}

\subsubsection{Enstrophy, Impulse, and Energy} \label{subsubsec:EIE}

We are in a position to discuss the key technical issues in establishing \eqref{eq:goal}, most of which are present already in the case $N = 2$. In this simplest case, we use the borderline $L(t) := \frac12( \bar{p}_1+\bar{p}_2+(\bar{V}_1+\bar{V}_2)t )$ to decompose $\omega(t,\cdot)$ as \begin{equation*}
	\begin{split}
		\omg(t,x) = \omg_{1}(t,x) + \omg_{2}(t,x) := \omg(t,x) \mathbf{1}_{ \{ x_{1} \ge L(t) \} } + \omg(t,x) \mathbf{1}_{ \{ x_{1} < L(t) \} }, 
	\end{split}
\end{equation*} and the goal is to obtain \eqref{eq:goal} for $i = 1, 2$.

\medskip

\noindent \textbf{Evolution of enstrophy and impulse}. In the simplest case of $N = 2$, thanks to the conservation law \eqref{eq:conserved-sum}, it suffices to prove $\int_0^\infty|\dot{K}_{1}(t)|dt, \int_0^\infty|\dot{\mu}_{1}(t)|dt \lesssim  \dlt_{0}$, where $\dot{}$ denotes the time derivative. A direct computation yields\begin{equation*}
	\begin{split}
		\dot{K}_{1}(t) = - \int_{ \{ x_{1} = L(t) \} } (\dot{L} - u_{1}(t,x) ) (\omg(t,x))^2 dx_{2}
	\end{split}
\end{equation*} 
and
\begin{equation*}
	\begin{split}
		\dot{\mu}_{1}(t) = - \int_{ \{ x_{1} = L(t) \} } (\dot{L} - u_{1}(t,x) ) x_{2} \omg(t,x) dx_{2} + \int_{ \{ x_{1} > L(t)  \} } (u_{2})_{2}(t,x)\omg_{1}(t,x) dx , 
	\end{split}
\end{equation*}  and the bootstrap assumptions give that $\dot{L} \gg |u_{1}|$ on the line $\{ x_{1} = L(t) \}$ for all $t \ge 0$. (Here, $(u_{2})_{2}$ is the second component of the velocity generated by $\omg_{2}$.) Therefore, the first terms in the right-hand sides are always negative, which represents flux of enstrophy and impulse from right to left through the border $\{ x_{1} = L(t) \}$. 

\medskip

\noindent \textbf{Filamentation and variational principle for two Lamb dipoles}. These flux terms capture the effect of \textit{filamentation}, or creation of long tails, behind a Lamb dipole.\footnote{This behavior is not specific to Lamb dipoles and widely observed in experiments and simulations \cite{KrXu,MoMo}.  Some rigorous results in formation of filaments are given in \cite{CJ-Lamb,CJ_Hill,DEJ,JYZ}.} Filamentation occurs \textit{generically} because, in the reference frame moving with a Lamb dipole, its velocity field advects vorticity perturbations from the front to the rear outside the dipole, see Figure \ref{fig_lamb1}. This is the immediate difficulty we face, compared to the point vortex case. 

Even in the single Lamb dipole case ($N=1$), the only known way to rule out large filamentation is through the energy conservation and appealing to the energy maximizing character of a single Lamb dipole. Assuming for a moment that the last term in the equation for $\dot{\mu}_1$ is negligible, for $N = 2$, we need to have a statement that large filamentation from $\omg_1$ to $\omg_2$ is \textit{energy unfavorable}, namely it decreases the total energy of the solution. To be more concrete, one may introduce the admissible class involving two ``infinitely separated'' vortices, for instance\footnote{This type of admissible classes were used for both construction and stability of ``multi-soliton'' solutions in certain situations in incompressible and compressible flows; see for instance \cite{JaSe,CQZZ.2023}.} \begin{equation*} %_{\bar\kpp_1,\bar\mu_1,\bar\kpp_2,\bar\mu_2}
	\begin{split}
		\qquad  \widetilde{\mathcal{A}}^{(2)} := \left\{ \omg = \omg_{1} + \omg_{2} \, : \, \dist(\supp(\omg_{1}), \supp(\omg_{2})) \gg 1,  \, \omg_{1} \in \widetilde{\mathcal{A}}_{\bar\kpp_1,\bar\mu_1},  \, \omg \in  \widetilde{\mathcal{A}}_{(\bar\kpp_1^2 + \bar\kpp_2^2)^{1/2},\bar\mu_1+\bar\mu_2} \right\}. 
	\end{split}
\end{equation*} Somewhat surprisingly, this approach simply cannot work,  because $\bar{\omg}_1 + \bar{\omg}_2$ with $\bar{\omg}_{i} := \omg_{Lamb}^{ \bar\kpp_i, \bar\mu_i }$ is not necessarily a local energy maximizer in  $\widetilde{\mathcal{A}}^{(2)}$, even when $\bar\kpp_1 > \bar\kpp_2$. Local energy maximizing property holds only if we assume additionally  ${\bar\kpp_{2}}/{\bar\mu_{2}} > {\bar\kpp_{1}}/{\bar\mu_{1}}$. In other words, when $\bar\kpp_1 > \bar\kpp_2$ and ${\bar\kpp_{2}}/{\bar\mu_{2}} \le  {\bar\kpp_{1}}/{\bar\mu_{1}}$, we are in the ``energy favorable'' regime, in the sense that pure backward transfer of enstrophy and impulse either preserves or even increases the total energy! This suggests the possibility that over time, $\omega_{2}$ may accelerate and eventually surpass $\omega_{1}$, breaking the stability without violating any of the conserved quantities. 

\medskip

\noindent \textbf{Relating energy variation with impulse variation}. It turns out that multiple Lamb dipole stability holds even in the above ``energy favorable'' regime: our key observation is that the individual energy tends to vary much slowly relative to the corresponding impulse: The time derivative of $E_{1}(t) := E[\omg_{1}(t,\cdot)]$ satisfies a similar identity with that of $\dot{\mu}_{1}$, and its negative term (due to filamentation) turns out to be much smaller than the corresponding negative term in $\dot{\mu}_{1}$.\footnote{This estimate quantifies the well-known tendency of incompressible fluids, at least within this specific setting: once energy becomes concentrated in a region, it tends to remain localized.} With some algebraic computations, this observation rules out stability breaking by large filamentation.

\medskip 

\noindent \textbf{The ``lift-up'' effect and  Lagrangian analysis of Gain--Error interaction}. We still need to show that the last term in the equation for $\dot{\mu}_1$ is small, after integrated in time from $0$ to $\infty$. This last term is actually nonnegative: $(u_2)_2(t,x) > 0$ pointwise on $\{ x_{1} > L(t) \}$, namely all the fluid particles from $\omg_{2}$ are ``lifting up'' those in $\omg_{1}$, increasing the impulse of $\omg_{1}$.\footnote{This can be also understood from the streamlines depicted in Figure \ref{fig_lamb1}.}  Strictly away from the borderline $L(t)$, this lift-up effect decays with rate $t^{-2}$ in time, which makes the time-integrated effect small for initially far away Lamb dipoles. This is indeed the main lesson from the case of point vortices. 

However, what may happen is that, close to the borderline $L(t)$, the part of the vorticity that $\omg_{2}$ gains from $\omg_{1}$ via filamentation may continuously transfer its impulse back to the ``adjacent'' part which is still contained in $\omg_{1}$.  We shall refer to this scenario as \textbf{Gain--Error interaction}: controlling this is the most technically involved part of the current paper. Let us summarize the key issues here: \begin{itemize}
	\item \textbf{Error} is defined by the part of $\omg_{1}$ which is relatively close to the borderline with $\omg_{2}$; its smallness is given in Lemma \ref{lem:omg-err-small}.  
	\item \textbf{Gain} is defined by the part of $\omg_{2}$ which came originally from $\omg_{1}$; its smallness is deduced in Lemma \ref{lem:gain-bounds}. To control the Gain--Error interaction, we need to have the $L^{1}$ control over the gain part--when two fluid particles are very close, the kernel defining the impulse exchange is singular, and cannot be directly controlled either by the enstrophy nor impulse.  
\end{itemize}
To deal with this issue of having singular kernel, we switch to Lagrangian coordinates: to get uniform smallness integrated over time, we track the whole trajectory of each ``gain particle'' and explicitly show that it can make a nontrivial contribution only for a relatively short period of time during its ``lifetime,'' which is sufficient since the total circulation ($L^{1}$ norm) of gain particles is small (Lemma \ref{lem:omg-err-small}). This key Lagrangian computation is presented in Lemma \ref{lem:gain-error}. In the end, what we manage to show is that the total impulse gain (integrated from $t = 0$ to $\infty$) of $\omg_{1}$ from $\omg_{2}$ can be bounded in terms of the total impulse loss of $\omg_{1}$ to $\omg_{2}$ via filamentation.

\medskip 

\noindent \textbf{Lagrangian bootstrapping scheme and the shift estimate}. The previous strategy needs to be carried out carefully through a set of bootstrap assumptions: we assume \eqref{eq:goal} for some $C\dlt_{0}$ over an interval of time $[0,T]$ and  proceed to show, using these bootstrap assumptions, that \eqref{eq:goal} actually holds for $C\dlt_{0}/100$, say.\footnote{We remark again that we do not have a quantitative relation between $\varep$ and $\dlt_{0}$, and in particular $\varep$-closeness of the solution to a sum of Lamb dipoles (which is the conclusion of Theorem \ref{thm:N-Lamb-dipoles}) can be used only in a very limited way.} In this process, we also need to ensure that each piece $\omg_{i}$ defined by \eqref{eq:decomp-P} ``contains'' the corresponding Lamb dipole near the expected location. This does not follow from energy considerations, and the estimate which tracks the approximate location of the Lamb dipole is based on another Lagrangian bootstrapping scheme itself. In this work, we follow the strategy from \cite{JYZ} for this part of the argument.

\medskip 

\noindent \textbf{Functional inequalities involving enstrophy and impulse}. Since our bootstrap assumptions provide sharp bounds on the enstrophy and impulse, it is very important to control the solution based on these two quantities as much as possible. For instance, Proposition \ref{prop:Energy-X} shows that the interaction energy can be bounded purely in terms of the enstrophy and impulse. While the proof is elementary, such a bound seems to be new.

\medskip

In the very end, the reason why Theorem \ref{thm:N-Lamb-dipoles} works is because the linear separation between the Lamb dipoles is sufficiently fast to prevent filaments from gaining and exchanging enough circulation, enstrophy, impulse, and energy with each other. The proof of Theorem \ref{thm:B} is more or less similar, with a slight difference in the choice of the bootstrap assumptions.

\subsection{Dynamics in the upper half-plane}\label{subsec:half-plane}

Let us review relevant works regarding dynamics of the Euler equations in the upper half-plane, to put our main results in context.

\subsubsection{General results on the upper half-plane} \label{subsubsec:ILN}

There are a few results concerning the general long time dynamics of nonnegative vortices in $\bbR^2_+$ \cite{I99,ISG99,ILL2003,Iftimie1}. In \cite{I99}, Iftimie proved that for any nontrivial nonnegative $\omg_{0} \in L^{1} \cap L^{\infty}(\bbR^2_+)$, there exists $c_{0} > 0$ such that for all time, \begin{equation*}
	\begin{split}
		\int_{\bbR^2_+} x_{1}\omg(t,x) dx \ge 	\int_{\bbR^2_+} x_{1}\omg_{0}(x) dx + c_{0}t 
	\end{split}
\end{equation*} where $\omg(t,\cdot)$ is the corresponding Yudovich solution.\footnote{The linear rate of growth is sharp since there is a uniform in time bound for $\nrm{u(t,\cdot)}_{L^\infty}$. While this result does not imply that the support of the vorticity in the $x_{1}$-coordinate can also grow linearly in time, this was proved later in \cite{CJ-Lamb} by filamentation from a single Lamb dipole.} In the same paper and in \cite{ILL2003}, it was further proved that for any $t \ge 0$, 
\begin{equation*}
	\begin{split}
		x_{2} \le C(t\log (2+t))^{1/3}, \qquad x_{1} \ge -D(t \log(2+t))^{1/2} \qquad \mbox{for all} \quad x = (x_1,x_2) \in \supp(\omg(t,\cdot)), 
	\end{split}
\end{equation*}
with the additional assumption that $\omg_{0}$ is compactly supported, where $C, D>0$ depend only on $\omg_{0}$. To the best of our knowledge, it is still not known whether these inequalities are sharp. Furthermore, \cite{ILL2003} obtained a \textit{scattering result} upon rescaling: defining $\tld{\omg}(t,x) := t^{2} \omg(t, tx)$ (this preserves the $L^{1}$ norm for each $t>0$), it was proved that if $\tld{\omg}(t,\cdot)$ (viewed as a measure on $\bbR^2_+$) weakly converges to a measure $\mu \otimes \dlt_{0}$ as $t \to \infty$, then $\mu$ must be the sum of an at most countable set of Dirac deltas, which can possibly accumulate only at 0. That is (we refer to \cite{ILL2003} for the precise statement), \begin{equation*}
	\begin{split}
		t^{2} \omg(t, tx) \rightharpoonup  \sum_{i = 1}^{N} \Gmm_{i} \dlt(x_{1} - a_{i}) \otimes \dlt(x_{2})
	\end{split}
\end{equation*} where $1 \le N \le \infty$, $\Gmm_{i} > 0$ and $a_{i} \in [0,C]$ where $C = \sup_{t\ge0}\nrm{ u_{1}(t,\cdot)}_{L^{\infty}}<\infty$.

{It is likely that} the solutions of Theorem \ref{thm:N-Lamb-dipoles} give a nontrivial limiting measure, where for each $1 \le i \le N$, $\Gmm_{i}$ and $a_{i}$ are given as the $L^{1}$ norm and the traveling speed of the $i$-th Lamb dipole, up to small errors uniform in $t>0$. {To prove this based on Theorem \ref{thm:N-Lamb-dipoles}, one still needs to remove the $O(t)$ term in the right-hand side of \eqref{eq:p-est}, after potentially re-defining $p_{i}(t)$.} Given that the result of \cite{ILL2003} does not exclude the case of countably many Dirac deltas accumulating at the origin, it is tempting to speculate stability for a countable sequence of rescaled Lamb dipoles, summable in the $L^{1}$ norm. However, this seems to be a challenging problem for now. 

\subsubsection{Traveling waves on the upper half-plane} There is a vast literature on the study of traveling wave solutions of \eqref{eq:2D-Euler} on the upper half-plane, many of which concern \textit{existence}, the Lamb dipole being a rare example with an explicit formula. There are relatively few results proving the nonlinear stability of such traveling waves defined on $\bbR^2_+$ \cite{Wan86,Burton05b,AC2019,Wang.2024,CQZZ25,Tu83,Tur87,BNL13,Burton21,CSW,Burton88,Burton89,Burton96}. These stability results are largely based on the expectations of Kelvin \cite{Kelvin1880}, Arnol'd \cite{Arnold66}, and Benjamin \cite{Ben84} that stable traveling waves are those who maximize the kinetic energy under the impulse constraint. This variational principle can be supplemented with additional constraints which are conserved by the Euler flow, e.g. the $L^p$ norm of the vorticity and more generally the distribution function. The stability result for such traveling waves with general distribution function was obtained by Burton, Nussenzveig Lopes, Lopes Filho in \cite{BNL13}, with relatively recent improvements in \cite{Burton21,Wang.2024}. These works concern the case when the vortex is separated from the boundary of the half-plane. More recently, stability for a larger class of traveling waves including the Lamb dipole was obtained in \cite{AC2019}. Furthermore, the \textit{uniqueness} of the maximizer under appropriate constraints is open, except for the case of Lamb dipoles \cite{Burton05b,AC2019} and the ``large impulse regime'' where the vortex is asymptotically circular \cite{CQZZ25}. Without uniqueness, the stability needs to be stated for the whole set of maximizers. 

Numerical and experimental works \cite{OvZa82,Pul92,VV98,KrXu,GF89,FV94} demonstrate the existence and stability of even a larger class of traveling waves than what is known. 

We expect that the methods used here could be generalized to obtain stability of linear superpositions of other traveling waves, under the conditions that (i) each traveling wave is orbital stable in an appropriate sense, (ii) faster traveling wave is always located to the right relative to slower ones, and (iii) they are initially sufficiently far apart. In particular, a natural generalization is to consider the Sadovskii vortex \cite{HT,CJS,CSW}, which is a traveling wave patch that admits a similar variational characterization to the Lamb dipole.  Still, many technical challenges will be present, especially when each of them is not related to the other by rescaling of some common profile. 

Although we only consider 2D Euler on $\bbR^2_+$ in this work, an analogous problem is to consider single-signed vortex rings in $\bbR^3$, which are axisymmetric solutions to the 3D Euler equations \cite{Wan88,Burton-ring,Choi-Hill,CQYZZ}. This is indeed the original setup for Kelvin and Benjamin, which motivated the study of traveling waves on $\bbR^2_+$ as a model problem. 

Lastly, we note that there are several recent works which concern long time dynamics of vortices in the strongly interacting regime, most notably the leapfrogging motion, both in two and three dimensions \cite{HHM,DdPMW,DdPMW22,BCM,DHLM}. 

\subsubsection{Case of the quadrant: additional odd symmetry} In the half-plane, one may consider vorticities that are odd in $x_{1}$, which reduces the motion to the first quadrant $(\bbR_+)^2$. If one further assumes that the vorticity is nonnegative on $(\bbR_+)^2$, then one obtains monotonicity (\cite{ISG99})\begin{equation*}
	\begin{split}
		\frac{d}{dt} \int_{ (\bbR_+)^2 } x_{1} \omg(t,x) dx > 0 > 	\frac{d}{dt} \int_{ (\bbR_+)^2 } x_{2} \omg(t,x) dx . 
	\end{split}
\end{equation*} Under these symmetry and sign assumptions, the stability ($t\ge0$) for two odd-symmetric Lamb dipoles moving away from each other was obtained in \cite{CJY}. As this formula shows, the vorticity impulse restricted to $(\bbR_+)^{2}$ decreases with time. In particular, one does not need to worry about impulse gain, which is the most challenging issue in the current work. 

Based on a detailed asymptotic information for sharply concentrated traveling waves in the half-plane (\cite{DdPMP}), the work \cite{DdPMP2} obtained the existence of a concentrated vortex in $(\bbR_+)^{2}$, which is asymptotic to a traveling wave in $\bbR^2_+$ as $t\to+\infty$. It would be an interesting problem to obtain the Lamb dipole in this asymptotic limit as well.

\subsubsection{Viscous traveling waves} 

Another very important issue is to understand the dynamics of slightly viscous traveling waves, see numerical computations in \cite{KrXu}. Very recently, \cite{DoGa} obtained a long time approximation of a viscous traveling wave with a pair of Dirac deltas on $\bbR^2$ as the initial condition; see \cite{GaSv} for the case of a single vortex ring.

\subsection{Organization of the paper}

The remainder of the paper is organized as follows. In \S \ref{sec:prelim}, we collect several inequalities involving the stream function, velocity, and energy. We also present some computations for the Lamb dipole and  discuss its stability. The proof of Theorem \ref{thm:N-Lamb-dipoles} is carried out through \S \ref{sec:overview}, \S \ref{sec:boot-con}, and \S \ref{sec:boot}. In \S \ref{sec:overview}, we introduce various types of decompositions of the solution, and fix the  bootstrap hypotheses. Various consequences of the bootstrap hypotheses are established then in \S \ref{sec:boot-con}. Then, we close the bootstrap assumptions in \S \ref{sec:boot}, completing the proof of Theorem \ref{thm:N-Lamb-dipoles}. Lastly, we prove Theorem \ref{thm:B} in \S \ref{sec:B}. 

\bigskip

\noindent \textbf{Acknowledgments}. KA is supported by the JSPS through the Grant in Aid for Scientific Research (C) 24K06800 and MEXT Promotion of Distinctive Joint Research Center Program JPMXP0619217849. IJ has been supported by the NRF grant from the Korea government (MSIT), No. 2022R1C1C1011051, RS-2024-00406821, the Asian Young Scientist Fellowship, and the KIAS Individual Grant at Korea Institute
for Advanced Study.  YY has been supported by the NUS startup grant, MOE Tier 1 grant, and the Asian Young Scientist Fellowship. 

{We are very grateful for the anonymous referees for their careful reading of the manuscript and providing several helpful remarks, which have been incorporated into this current version. We also thank Kyudong Choi, Kwan Woo, Guolin Qin, and Chanhong Min for their comments.}

\section{Preliminaries}\label{sec:prelim}

In this section, we collect several useful inequalities which will be used later in the proofs. Most of the inequalities are well-known (cf. \cite{Iftimie1,ILL2003,Burton88}), but the ones in Proposition \ref{prop:Energy-X} are new, to the best of our knowledge.

\subsection{Notations and conventions} 

For a point $x = (x_1,x_2) \in \bbR^2$, we write $\bar{x} = (x_1,-x_2)$. With this notation, for two points $x, y \in \bbR^2$, we have $|x-\bar{y}| = |\bar{x}-y| \ge |x_2+y_2|$. The integrals and norms will be taken in the upper half-plane $\bbR^2_+$, unless otherwise specified. The general bounds for the stream function, velocity, and energy do not require that $\omg$ is nonnegative, and we shall mention it explicitly when the assumption $\omg\ge0$ is necessary.

\subsection{Bounds for stream function, velocity, and energy}

\subsubsection{Stream function} Given a vorticity $\omg$, the corresponding stream function $\psi = \lap^{-1}\omg$ is given explicitly by \begin{equation}\label{eq:G-def}
	\begin{split}
		\psi(x) %& = \frac{1}{2\pi}\int_{\mathbb{R}^2_{+}}\Large(\log{|x-y|} - \log{|x-\bar{y}|} \Large)\omega(y)\,dy  = -\frac{1}{4\pi}\int_{\mathbb{R}^2_{+}} \log \left(  \frac{|x-\bar{y}|^{2}}{|x-{y}|^{2}} \right)  \, \omega(y)\,dy \\
		& = -\frac{1}{4\pi} \int_{\mathbb{R}^2_{+}} \log \left( 1 + \frac{4x_{2}y_{2}}{|x-{y}|^{2}} \right)  \, \omega(y)\,dy =: -\int_{\mathbb{R}^2_{+}} \bfG(x,y) \, \omega(y)\,dy .
	\end{split}
\end{equation} The kernel $\bfG$ is nonnegative for all $x, y \in \bbR^2_{+}$ and therefore if $\omg\ge0$ on $\bbR^2_+$, we have that $\psi\le 0$. Furthermore, for any $0<\alp\le1$, there is a constant $C_\alp>0$ depending only on $\alp$ such that 
\begin{equation}\label{eq_alpha}
	\begin{split}
		0\leq \bfG(x,y) \le C_{\alp}  \frac{(x_2y_2)^\alp}{|x-{y}|^{2\alp}} \quad\text{ for all }x,y\in\bbR^2_+, x\neq y.
	\end{split}
\end{equation} {This follows from the general inequality $\log(1+z) \le C_{\alp}z^{\alp}$, which works for any $z \ge 0$ and $0<\alp \le 1$.} In particular, if $x\in\mathbb{R}^2_+$ satisfies $x_2>0$, and $x$ is separated from the support of $\omega$ by a distance $D(x) = \inf_{y \in \supp(\omg)} |x-{y}| > 0$,  we can apply the above inequality for $\alpha=1$ and obtain
\begin{equation*}
	\begin{split}
		\frac{|\psi(x)|}{x_{2}} \le \frac{C}{D^2(x)}\nrm{ \omg}_{L^1_* }. 
	\end{split}
\end{equation*}

\subsubsection{Velocity } The velocity $u = (u_1,u_2)$ is defined by $u(x) = \nb^\perp\psi =   -\nb^\perp(-\lap)^{-1}\omg$. For convenience, we introduce the kernel 	$\bfK(x,y) = (\bfK_1(x,y),\bfK_2(x,y)) = -\nb^\perp_{x}\bfG(x,y)$ so that \begin{equation}\label{eq:K-def}
	\begin{split}
		u(x) = \int_{\bbR^2_+} \bfK(x,y) \omg(y) dy . 
	\end{split}
\end{equation} 

We present a simple lemma for the velocity.  
\begin{lemma}\label{lem:vel-decay}
	The velocity kernel $\bfK$ satisfies \begin{equation}\label{eq:K-decay}
		\begin{split}
			|\bfK(x,y)| \le \frac{Cy_{2}}{|x-y||x-\bar{y}|}.
		\end{split}
	\end{equation} If $x\in\mathbb{R}_2^+$ is separated from $\supp(\omega)$ by a distance $D(x) :=  \inf_{y \in \supp(\omg)} |x-y| > 0$, then $u(x)$ satisfies \begin{equation}\label{eq:u-decay}
		\begin{split}
			|u(x)|  \le \frac{C}{ D^{2}(x)} \nrm{ \omg}_{L^1_* },
		\end{split}
	\end{equation} and its second component further satisfies the estimate \begin{equation}\label{eq:u2-decay2}
		\begin{split}
			\frac{|u_{2}(x)|}{x_{2}} \le \frac{C}{D^{3}(x)}\min\left\{  {\nrm{ \omg}_{L^1_* }} ,D(x) \nrm{\omg}_{L^{1}} \right\} . 
		\end{split}
	\end{equation} 
\end{lemma}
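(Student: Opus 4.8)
The plan is to establish the three estimates in turn, all starting from the explicit kernel $\bfG(x,y) = \frac{1}{4\pi}\log\left(1 + \frac{4x_2y_2}{|x-y|^2}\right)$ and its gradient $\bfK = -\nb_x^\perp \bfG$. First I would compute $\bfK$ explicitly. Writing $s = |x-y|^2$ and $\bar s = |x-\bar y|^2 = |x-y|^2 + 4x_2 y_2$, we have $\bfG = \frac{1}{4\pi}(\log \bar s - \log s)$, so $\nb_x \bfG = \frac{1}{4\pi}\left(\frac{\nb_x \bar s}{\bar s} - \frac{\nb_x s}{s}\right)$ with $\nb_x s = 2(x-y)$ and $\nb_x \bar s = 2(x-\bar y)$. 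Hence $\bfK = -\nb_x^\perp \bfG = \frac{1}{2\pi}\left(\frac{(x-\bar y)^\perp}{|x-\bar y|^2} - \frac{(x-y)^\perp}{|x-y|^2}\right)$. To get \eqref{eq:K-decay} I would put the two terms over a common denominator: the numerator is $(x-\bar y)^\perp|x-y|^2 - (x-y)^\perp|x-\bar y|^2$, which rearranges to $(x-\bar y)^\perp - (x-y)^\perp$ times $|x-y|^2$ plus $(x-y)^\perp$ times $(|x-y|^2 - |x-\bar y|^2)$; since $(x-\bar y) - (x-y) = y - \bar y = (0, 2y_2)$ and $|x-y|^2 - |x-\bar y|^2 = -4x_2 y_2$, both pieces carry a factor $y_2$, and after bounding $|x-y|^2 \le |x-\bar y|^2$ and $|x - y| \le |x-\bar y|$ one obtains $|\bfK(x,y)| \le \frac{C y_2}{|x-y|\,|x-\bar y|}$. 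This is the one genuinely computational step, but it is short.

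Next, for \eqref{eq:u-decay}, I would integrate \eqref{eq:K-decay} against $\omg$. On $\supp(\omg)$ we have $|x-y| \ge D(x)$ and also $|x-\bar y| \ge |x - y| \ge D(x)$, and moreover $|x - \bar y| \ge x_2 + y_2 \ge y_2$. Feeding the latter into \eqref{eq:K-decay} gives $|\bfK(x,y)| \le \frac{C y_2}{|x-y|\,|x-\bar y|} \le \frac{C y_2}{D(x)^2}$ after also using $|x-\bar y| \ge D(x)$; wait, that would give $\frac{C y_2}{D^2}$ which integrates to $\frac{C}{D^2}\|\omg\|_{L^1_*}$ — exactly \eqref{eq:u-decay}. (One must be slightly careful: use $|x-y| \ge D$ and $|x-\bar y| \ge D$ to kill the denominator, leaving the bare $y_2$ in the numerator, then integrate.)

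For the refined second-component estimate \eqref{eq:u2-decay2}, I would look at $\bfK_2(x,y)$, the second component of $\bfK$. From the formula above, $\bfK_2 = -\rd_{x_1}\bfG = \frac{1}{2\pi}\left(\frac{x_1 - y_1}{|x-y|^2} - \frac{x_1 - y_1}{|x-\bar y|^2}\right) = \frac{x_1 - y_1}{2\pi}\cdot\frac{|x-\bar y|^2 - |x-y|^2}{|x-y|^2|x-\bar y|^2} = \frac{(x_1-y_1)\cdot 4 x_2 y_2}{2\pi |x-y|^2 |x-\bar y|^2}$. Thus $|\bfK_2(x,y)| \le \frac{C x_2 y_2 |x_1 - y_1|}{|x-y|^2 |x-\bar y|^2} \le \frac{C x_2 y_2}{|x-y|\,|x-\bar y|^2}$. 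Dividing by $x_2$ and integrating: using $|x-y|\ge D$ and one factor $|x-\bar y| \ge D$, and then either (a) the remaining $|x-\bar y| \ge y_2$ gives $\frac{|u_2(x)|}{x_2} \le \frac{C}{D^2}\int \frac{y_2}{|x-\bar y|}\omg\,dy \le \frac{C}{D^2}\cdot\frac{1}{D}\int y_2\omg\,dy$... hmm, that needs care. More cleanly: $|x-\bar y|^2 \ge D(x)\cdot|x-\bar y| \ge D(x)\cdot(x_2+y_2) \ge D(x) y_2$, so $\frac{y_2}{|x-\bar y|^2} \le \frac{1}{D(x)}$, giving $\frac{|\bfK_2|}{x_2} \le \frac{C}{|x-y|\,D(x)} \le \frac{C}{D^2(x)}$, which integrates against $\omg$ to $\frac{C}{D^3(x)}\|\omg\|_{L^1_*}$ after reinstating one $y_2$ — this yields the $\|\omg\|_{L^1_*}$ branch. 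For the $D(x)\|\omg\|_{L^1}$ branch, instead bound $\frac{y_2}{|x-\bar y|^2}$ by $\frac{y_2}{|x-\bar y|\cdot D(x)} \le \frac{1}{D(x)}$ is the same; the point is to not use up a $y_2$: from $|\bfK_2(x,y)|/x_2 \le \frac{C y_2}{|x-y|\,|x-\bar y|^2} \le \frac{C y_2}{D^2|x-\bar y|} \le \frac{C y_2}{D^2(x_2+y_2)} \le \frac{C}{D^2}$ — wait this uses $y_2/(x_2+y_2)\le 1$, leaving no weight, integrating to $\frac{C}{D^2}\|\omg\|_{L^1}$; combined with the third factor $|x-\bar y|\ge D$ somewhere one gets $\frac{C}{D^3}\cdot D\|\omg\|_{L^1}$. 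Taking the minimum of the two resulting bounds gives \eqref{eq:u2-decay2}. The main obstacle is purely bookkeeping: distributing the three ``small'' factors $|x-y|^{-1}$, $|x-\bar y|^{-1}$, $|x-\bar y|^{-1}$ among the targets $D^{-3}$, the weight $y_2$, and the switch between $\|\omg\|_{L^1_*}$ and $\|\omg\|_{L^1}$ — so that both branches of the minimum come out with the correct power of $D$. No analytic subtlety is expected beyond the algebraic identity for $\bfK$.
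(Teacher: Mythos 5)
Your proposal is correct and follows essentially the same route as the paper: compute the kernel explicitly from $\bfG$, combine the image and direct terms over a common denominator so that a factor of $y_{2}$ appears in the numerator, bound using $|x-\bar{y}| \ge \max\{x_{2}+y_{2}, |x-y|\}$, and for the separated case distribute the available factors of $D(x)$ and the weight $y_{2}$ between the two branches of the minimum. The overall sign you drop in $\bfK$ and the hedging in the $\bfK_{2}$ bookkeeping are harmless, since only $|\bfK|$ is ever used and both branches of \eqref{eq:u2-decay2} come out with the correct powers of $D(x)$.
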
\begin{proof}
	We write 
	\begin{equation}\label{eq_u1}
		\begin{split}
			u_{1}(x)  & = - \frac{1}{2\pi} \int_{\bbR^2_+} \left( \frac{x_2-y_2}{|x-y|^2} - \frac{x_2+y_2}{|x-\bar{y}|^2} \right) \omg(y) dy \\
			& = -\frac{1}{2\pi} \int_{\bbR^2_+} \frac{4x_2^2}{|x-y|^2|x-\bar{y}|^2} y_2\omg(y) dy + \frac{1}{2\pi} \int_{\bbR^2_+} \left( \frac{1}{|x-y|^2} + \frac{1}{|x-\bar{y}|^2}  \right) y_2\omg(y) dy  \\
			& = \frac{1}{2\pi} \int_{\bbR^2_+} \frac{2(x_1-y_1)^{2} + 2(y_2^2-x_2^2)}{|x-y|^2|x-\bar{y}|^2} y_2\omg(y) dy
		\end{split}
	\end{equation} Similarly, \begin{equation}
		\label{eq_u2_temp}
		\begin{split}
			u_{2}(x) & =  \frac{1}{2\pi} \int_{\bbR^2_+} \left( \frac{x_1-y_1}{|x-y|^2} - \frac{x_1-y_1}{|x-\bar{y}|^2} \right) \omg(y) dy   =\frac{1}{2\pi} \int_{\bbR^2_+} \frac{4(x_1-y_1)x_2}{|x-y|^2|x-\bar{y}|^2} y_2\omg(y)dy. 
		\end{split}
	\end{equation} Therefore, \eqref{eq:K-decay} follows using that $|x-\bar{y}| \ge \max\{x_{2}+ y_{2},
	|x-y|\}$. From \eqref{eq:K-decay}, the bound \eqref{eq:u-decay} readily follows. \eqref{eq:u2-decay2} directly follows from \eqref{eq_u2_temp}.
\end{proof}

\begin{lemma}\label{lem:vel-L-infty}
	We have \begin{equation}\label{eq:vel-L-infty-log}
		\begin{split}
			\nrm{u}_{L^{\infty}} \le C\nrm{\omg}_{L^{2}}\left( 1 + \log_{+}\left( \frac{ \nrm{\omg}^{2}_{L^{\infty}} \nrm{ \omg}_{L^1_* } }{\nrm{\omg}_{L^{2}}^{3} } \right) \right).
		\end{split}
	\end{equation}
	As a consequence, for any $0<\alp<1$, there exists a constant $C_{\alp}>0$ such that \begin{equation}\label{eq:vel-L-infty-alpha}
		\begin{split}
			\nrm{u}_{L^{\infty}} \le C_{\alp}\nrm{\omg}_{L^{2}}^{1-\alp}\left( \nrm{\omg}_{L^{2}}^{\alp} + \nrm{\omg}_{L^{\infty}}^{2\alp/3} \nrm{ \omg}_{L^1_* }^{\alp/3} \right).
		\end{split}
	\end{equation}
\end{lemma}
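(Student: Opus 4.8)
\textbf{Proof plan for Lemma \ref{lem:vel-L-infty}.}

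The plan is to estimate $\nrm{u}_{L^\infty}$ by splitting the velocity integral \eqref{eq:K-def} into a near-field and a far-field piece and optimizing over the splitting radius. Fix $x\in\bbR^2_+$ and a parameter $\rho>0$ to be chosen. Using the kernel bound \eqref{eq:K-decay}, which gives $|\bfK(x,y)|\le Cy_2/(|x-y||x-\bar y|)\le C/|x-y|$ (since $|x-\bar y|\ge y_2$), I would write
\begin{equation*}
	|u(x)| \le C\int_{|x-y|\le\rho}\frac{|\omg(y)|}{|x-y|}\,dy + C\int_{|x-y|>\rho}\frac{y_2|\omg(y)|}{|x-y||x-\bar y|}\,dy =: I_{near} + I_{far}.
\end{equation*}
For $I_{near}$, the kernel $|x-y|^{-1}$ is in $L^2$ of the disk of radius $\rho$ with norm $\aeq \rho^{1/2}$, so Cauchy--Schwarz gives $I_{near}\le C\rho^{1/2}\nrm{\omg}_{L^2}$. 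For $I_{far}$, on the region $|x-y|>\rho$ I use $|x-y|^{-1}<\rho^{-1}$ and $|x-\bar y|^{-1}\le (x_2+y_2)^{-1}\le y_2^{-1}$, which cancels the $y_2$ weight and leaves $I_{far}\le C\rho^{-1}\nrm{\omg}_{L^1}$. This already gives a bound in terms of $L^1$; to get the stated bound in terms of $L^1_*$ I instead estimate $I_{far}$ more carefully, keeping the $y_2$ factor and bounding $|x-y|^{-1}|x-\bar y|^{-1}\le |x-\bar y|^{-2}$ only where that helps, or splitting the far region by the size of $y_2$. A cleaner route: bound $|\bfK(x,y)|\le C y_2 |x-\bar y|^{-2}$ when $|x-y|\ge|x-\bar y|/2$ and treat the complementary ``image-singular'' region separately; on the far region $|x-y|>\rho$ one has $|x-\bar y|\gtrsim\rho$ away from the image point, giving $I_{far}\le C\rho^{-2}\nrm{\omg}_{L^1_*}$, while the thin neighborhood of $\bar y$ (which only contributes when $x_2$ is small) contributes a lower-order term that can be absorbed. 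Then optimize: choosing $\rho$ to balance $\rho^{1/2}\nrm{\omg}_{L^2}$ against $\rho^{-2}\nrm{\omg}_{L^1_*}$ would give a power-type bound, but to recover the sharp logarithmic form \eqref{eq:vel-L-infty-log} one must instead use the $L^\infty$ bound for the innermost piece.

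More precisely, for the logarithmic estimate I would use a three-zone decomposition with radii $r_0<\rho$: on $|x-y|\le r_0$ estimate $\int |x-y|^{-1}\,dy\le C r_0$ and pull out $\nrm{\omg}_{L^\infty}$, getting $C r_0\nrm{\omg}_{L^\infty}$; on the annulus $r_0<|x-y|\le\rho$ use Cauchy--Schwarz with the $L^2$ norm of $|x-y|^{-1}$ over the annulus, which is $\aeq(\log(\rho/r_0))^{1/2}$, giving $C(\log(\rho/r_0))^{1/2}\nrm{\omg}_{L^2}$; and on $|x-y|>\rho$ the far-field estimate above gives $C\rho^{-2}\nrm{\omg}_{L^1_*}$. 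Now choose $\rho$ so that $\rho^{-2}\nrm{\omg}_{L^1_*}\aeq\nrm{\omg}_{L^2}$, i.e. $\rho\aeq(\nrm{\omg}_{L^1_*}/\nrm{\omg}_{L^2})^{1/2}$, and choose $r_0$ so that $r_0\nrm{\omg}_{L^\infty}\aeq\nrm{\omg}_{L^2}$, i.e. $r_0\aeq\nrm{\omg}_{L^2}/\nrm{\omg}_{L^\infty}$. Substituting, $\log(\rho/r_0)\aeq\log\big(\nrm{\omg}_{L^\infty}\nrm{\omg}_{L^1_*}^{1/2}\nrm{\omg}_{L^2}^{-3/2}\big)\aeq\frac12\log\big(\nrm{\omg}_{L^\infty}^2\nrm{\omg}_{L^1_*}\nrm{\omg}_{L^2}^{-3}\big)$, and collecting terms yields $\nrm{u}_{L^\infty}\le C\nrm{\omg}_{L^2}\big(1+\log_+(\nrm{\omg}_{L^\infty}^2\nrm{\omg}_{L^1_*}\nrm{\omg}_{L^2}^{-3})^{1/2}\big)$, which is \eqref{eq:vel-L-infty-log} up to absorbing the $1/2$ into the constant. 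One must be slightly careful when $r_0>\rho$ (the middle annulus is empty), in which case the argument collapses to the two-zone version and the $\log_+$ is simply zero; this is exactly why $\log_+$ rather than $\log$ appears.

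For the second estimate \eqref{eq:vel-L-infty-alpha}, I would interpolate: from \eqref{eq:vel-L-infty-log}, for any $0<\alp<1$ use the elementary inequality $1+\log_+ s\le C_\alp s^{\alp}$ (valid for $s\ge 1$, and trivial for $s<1$ since then $\log_+s=0$), applied with $s=\nrm{\omg}_{L^\infty}^2\nrm{\omg}_{L^1_*}\nrm{\omg}_{L^2}^{-3}$. This gives $\nrm{u}_{L^\infty}\le C_\alp\nrm{\omg}_{L^2}(1+s^{\alp})\le C_\alp\nrm{\omg}_{L^2}^{1-\alp}(\nrm{\omg}_{L^2}^\alp+\nrm{\omg}_{L^\infty}^{2\alp/3}\nrm{\omg}_{L^1_*}^{\alp/3})$ after distributing the powers, which is exactly \eqref{eq:vel-L-infty-alpha}; here I absorbed the factor $3$ rescaling into $\alp$ by replacing $\alp$ with $3\alp$ at the cost of only restricting the range, or more simply apply $1+\log_+ s\le C_\alp s^{\alp/3}$ directly. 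The main obstacle is organizing the far-field estimate so that the $y_2$-weight is genuinely turned into the $L^1_*$ norm with the correct $\rho^{-2}$ decay while handling the image singularity at $\bar y$ cleanly (this is where the factor $|x-\bar y|$ rather than $|x-y|$ in \eqref{eq:K-decay} is essential, and where the case $x_2$ small must be checked); the optimization and interpolation steps are then routine.
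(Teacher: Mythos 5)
Your proof is correct and follows essentially the same route as the paper: a three-zone splitting of the Biot--Savart integral at radii $r_0\aeq\nrm{\omg}_{L^2}/\nrm{\omg}_{L^\infty}$ and $\rho\aeq(\nrm{\omg}_{L^1_*}/\nrm{\omg}_{L^2})^{1/2}$, with the far field controlled by $|\bfK(x,y)|\le Cy_2/|x-y|^2$ and the second estimate obtained by bounding $\log_+$ by a power. The only superfluous worry is the ``image singularity'' at $\bar y$: since $x,y\in\bbR^2_+$ one always has $|x-\bar y|\ge|x-y|$, so the far-field bound $I_{far}\le C\rho^{-2}\nrm{\omg}_{L^1_*}$ follows directly with no separate treatment of a neighborhood of $\bar y$.
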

\begin{remark}\label{rmk:stream-Hardy-bound}
	These upper bounds hold also for $|\psi(x)/x_{2}|$, since \begin{equation*}
		\begin{split}
			\frac{\psi(x)}{x_{2}} = -\frac{1}{x_2}\int_0^{x_2} u_{1}(x_1,z) dz . 
		\end{split}
	\end{equation*}
\end{remark}
\begin{proof}
	Let us apply \eqref{eq:K-decay} to the integral \eqref{eq:K-def} for the velocity, and split the integration domain into regions (i) $\{ |x-y| < l \}$, (ii) $\{ l \le |x-y| < L \}$, and (iii) $\{  L \le |x-y|  \}$ where  \begin{equation*}
		\begin{split}
			l := \frac{\nrm{\omg}_{L^{2}} }{ \nrm{\omg}_{L^\infty} }, \qquad L^{2}:= \frac{\nrm{ \omg}_{L^1_* }}{\nrm{\omg}_{L^{2}}}.  
		\end{split}
	\end{equation*} When $l \ge L$, we do not need to estimate the integral in the intermediate region (ii), which corresponds precisely to the case when the argument of $\log_{+}$ is less than 1 in \eqref{eq:vel-L-infty-log}. 
	
	For the regions (i), (ii), and (iii), we may bound the integrals by \begin{equation*}
		\begin{split}
			Cl\nrm{\omg}_{L^\infty}, \quad C\nrm{\omg}_{L^2}\log_+\left( \frac{L}{l} \right), \quad CL^{-2}\nrm{ \omg}_{L^1_* }, 
		\end{split}
	\end{equation*} respectively, where the bound for (iii) is a direct consequence of Lemma \ref{lem:vel-decay}. Plugging the definitions of $L$ and $l$ into above and taking the sum finishes the proof. 
\end{proof}

\subsubsection{Kinetic energy}\label{subsubsec:energy} For $\omg$ defined on $\bbR^2_+$, its kinetic energy $E=E[\omega]$ on $\bbR^2_+$ is defined by \begin{equation}\label{eq:E-def}
	\begin{split}
		E[\omega]=  -\frac12 \int_{\mathbb{R}^2_+}\psi(x)\omega(x)\,dx= \frac12 \int_{\mathbb{R}^2_+}|u(x)|^{2} \,dx.
	\end{split}
\end{equation}
%For vorticity odd symmetric in $x_{2}$, we have \begin{equation*}
	%	\begin{split}
		%		E[\omg] = \frac{1}{4\pi}  \iint_{\bbR^2_+ \times \bbR^2_+} \log \left(  \frac{|x-\bar{y}|^{2}}{|x-{y}|^{2}} \right)  \, \omega(y)\omg(x) \, dx dy. 
		%	\end{split}
	%\end{equation*} which shows that 
	Alternatively, the energy can be expressed in terms of the double integral \begin{equation*}\label{eq:E-double-integral}
		\begin{split}
			E[\omg] = \frac12 \iint_{\bbR^2_+ \times \bbR^2_+} \omg(x)\bfG(x,y) \omg(y) dxdy % = \int_{\bbR^2_+} \omg(x)\varphi(x) dx 
		\end{split}
	\end{equation*}  where $\bfG$ was defined in \eqref{eq:G-def}. We also define the \emph{interaction energy} between two vorticities as follows:
	\begin{equation}\label{eq:E-int-def}
		\begin{split}
			E_{inter}[\omg,\tld{\omg}] := \frac12 \iint_{\bbR^2_+ \times \bbR^2_+} \omg(x) \bfG(x,y) \tld{\omg}(y) dxdy = E_{inter}[\tld\omg,{\omg}].
		\end{split}
	\end{equation}
	Since $\bfG(x,y) \geq 0$ for all $x, y \in \bbR^2_{+}$, we have
	$E[\omega]\ge 0$ for any $\omg\geq 0$, as long as it is finite. Furthermore, when $\omg, \tld\omg \ge 0$ on $\mathbb{R}^2_+$, we have $E_{inter}[\omg,\tld{\omg}] \ge 0$.
	
	Below we prove various bounds on the interaction energy. These inequalities are inspired by \cite[Proposition 2.1]{AC2019}, but we do not require the vorticity to be bounded in $L^{1}$. 
	\begin{proposition}\label{prop:Energy-X}
		Let $\omg, \tld{\omg} \in L^{2} \cap L^1_* (\bbR^2_+)$ and $\psi, \tld{\psi}$ be their corresponding stream functions. We have \begin{equation}\label{eq:E-inter-unsymm}
			\begin{split}
				\left|E_{inter}[\omg,\tld{\omg}]\right| \le \frac12 \min\left\{ \nrm{x_{2}^{-1}\tld{\psi}}_{L^{\infty}(\supp(\omg))}\nrm{\omg}_{L^1_* (\bbR^2_+)}  ,  \nrm{x_{2}^{-1}{\psi}}_{L^{\infty}(\supp(\tld\omg))}\nrm{\tld\omg}_{L^1_* (\bbR^2_+)}   \right\}, 
			\end{split}
		\end{equation} \begin{equation}\label{eq:Energy-interaction-X}
			\begin{split}
				\left|E_{inter}[\omg,\tld\omg]\right| \le C(\nrm{ \omg}_{L^1_* (\bbR^2_+)} \nrm{\omg}_{L^{2}(\bbR^2_+)}\nrm{\tld\omg}_{L^1_* (\bbR^2_+)} \nrm{\tld\omg}_{L^{2}(\bbR^2_+)})^{1/2},
			\end{split}
		\end{equation}and\begin{equation}\label{eq:Energy-difference-X}
			\begin{split} |E[\omg] - E[\tld\omg]| \le C(\nrm{\omg-\tld\omg}_{L^1_* (\bbR^2_+)} \nrm{\omg-\tld\omg}_{L^{2}(\bbR^2_+)}\nrm{\omg+\tld\omg}_{L^1_* (\bbR^2_+)} \nrm{\omg+\tld\omg}_{L^{2}(\bbR^2_+)})^{1/2}.\end{split}		\end{equation}
		If there is a positive distance $D$ between $\supp(\omg)$ and $\supp(\tld\omg)$ in $\bbR^{2}_{+}$, then we have  \begin{equation}\label{eq:Energy-interaction-support-separation}
			\begin{split}
				E_{inter}[\omg,\tld{\omg}] \le \frac{C}{D^{2}}\nrm{ \omg}_{L^1_* }\nrm{\tld\omg}_{L^1_* }. 
			\end{split}
		\end{equation}
	\end{proposition}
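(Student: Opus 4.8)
The plan is to establish the four inequalities in order, with \eqref{eq:Energy-interaction-X} as the analytic core; the other three follow from it or from elementary manipulations of the kernel $\bfG$. In all cases we may assume the right-hand sides are finite.

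For \eqref{eq:E-inter-unsymm}, I would simply unfold the definitions. Since $\int_{\bbR^2_+}\bfG(x,y)\,\tld\omg(y)\,dy = -\tld\psi(x)$ by \eqref{eq:G-def}, we have $E_{inter}[\omg,\tld\omg] = -\tfrac12\int_{\bbR^2_+}\omg(x)\tld\psi(x)\,dx$, hence
\[
|E_{inter}[\omg,\tld\omg]| \le \frac12\int_{\bbR^2_+}|\omg(x)|\,\frac{|\tld\psi(x)|}{x_2}\,x_2\,dx \le \frac12\,\nrm{x_2^{-1}\tld\psi}_{L^\infty(\supp\omg)}\,\nrm{\omg}_{L^1_*},
\]
and the symmetry $E_{inter}[\omg,\tld\omg] = E_{inter}[\tld\omg,\omg]$ supplies the other term in the minimum.

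For \eqref{eq:Energy-interaction-X}, I would estimate $|E_{inter}[\omg,\tld\omg]| \le \tfrac12\iint |\omg(x)|\,\bfG(x,y)\,|\tld\omg(y)|\,dx\,dy$ and insert \eqref{eq_alpha} with $\alp = 1/2$, reducing the problem to bounding $\iint \frac{f(x)g(y)}{|x-y|}\,dx\,dy$ where $f = |\omg|\,x_2^{1/2}$ and $g = |\tld\omg|\,x_2^{1/2}$, extended by zero to $\bbR^2$. The Hardy--Littlewood--Sobolev inequality on $\bbR^2$ with exponent $\lambda = 1$ (so the dual exponents are $p = q = 4/3$) bounds this double integral by $C\nrm{f}_{L^{4/3}}\nrm{g}_{L^{4/3}}$. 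To finish I would interpolate: writing $|\omg|^{4/3}x_2^{2/3} = (|\omg|\,x_2)^{2/3}\,|\omg|^{2/3}$ and applying H\"older with exponents $(3/2,3)$ gives $\nrm{f}_{L^{4/3}}^{4/3} \le \nrm{\omg}_{L^1_*}^{2/3}\nrm{\omg}_{L^2}^{2/3}$, i.e. $\nrm{f}_{L^{4/3}} \le \nrm{\omg}_{L^1_*}^{1/2}\nrm{\omg}_{L^2}^{1/2}$, and similarly for $g$. Combining yields \eqref{eq:Energy-interaction-X}. This is the step I expect to need the most care, although it remains elementary; note that no sign assumption on $\omg,\tld\omg$ is used here.

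Estimate \eqref{eq:Energy-difference-X} then follows by polarization: using the algebraic identity $\omg(x)\omg(y) - \tld\omg(x)\tld\omg(y) = \tfrac12\big[(\omg-\tld\omg)(x)(\omg+\tld\omg)(y) + (\omg+\tld\omg)(x)(\omg-\tld\omg)(y)\big]$ together with the symmetry $\bfG(x,y) = \bfG(y,x)$, one obtains $E[\omg] - E[\tld\omg] = E_{inter}[\omg-\tld\omg,\ \omg+\tld\omg]$, and \eqref{eq:Energy-interaction-X} (valid for signed data) applies directly with $\omg \rightsquigarrow \omg-\tld\omg$ and $\tld\omg \rightsquigarrow \omg+\tld\omg$. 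Finally, for \eqref{eq:Energy-interaction-support-separation}: on $\supp(\omg)\times\supp(\tld\omg)$ we have $|x-y| \ge D$, so \eqref{eq_alpha} with $\alp = 1$ gives $\bfG(x,y) \le C x_2 y_2 / D^2$; inserting this into $E_{inter}[\omg,\tld\omg] = \tfrac12\iint \omg(x)\bfG(x,y)\tld\omg(y)\,dx\,dy$ and separating the integrals yields $\tfrac{C}{D^2}\big(\int x_2\omg\big)\big(\int y_2\tld\omg\big) = \tfrac{C}{D^2}\nrm{\omg}_{L^1_*}\nrm{\tld\omg}_{L^1_*}$ when $\omg,\tld\omg\ge0$ (and the same bound with absolute values for signed data).
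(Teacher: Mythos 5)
Your proposal is correct, and three of the four estimates (\eqref{eq:E-inter-unsymm}, \eqref{eq:Energy-difference-X} via polarization to $E_{inter}[\omg-\tld\omg,\omg+\tld\omg]$, and \eqref{eq:Energy-interaction-support-separation}) are argued exactly as in the paper. The one genuine difference is in the core estimate \eqref{eq:Energy-interaction-X}. The paper splits the double integral at a scale $L$: on $\{|x-y|\ge L\}$ it uses the linear bound on the logarithm to get $CL^{-2}\nrm{\omg}_{L^1_*}\nrm{\tld\omg}_{L^1_*}$, on $\{|x-y|<L\}$ it applies \eqref{eq_alpha} with $\alp=1/3$ and Young's convolution inequality with the truncated kernel in $L^{3/2}$ (the truncation is what makes Young applicable, since $|x|^{-2/3}$ is not globally in $L^{3/2}(\bbR^2)$), and then optimizes over $L$. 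You instead take $\alp=1/2$ and invoke Hardy--Littlewood--Sobolev with $\lambda=1$, $p=q=4/3$ in $\bbR^2$ (the exponents check out: $3/4+3/4+1/2=2$), followed by the same H\"older interpolation $\nrm{x_2^{1/2}\omg}_{L^{4/3}}\le\nrm{\omg}_{L^1_*}^{1/2}\nrm{\omg}_{L^2}^{1/2}$. Your route is shorter and avoids the optimization, at the cost of using HLS rather than the more elementary Young inequality; both yield the identical bound, and you correctly note that no sign assumption is used, which is what makes the polarization step for \eqref{eq:Energy-difference-X} legitimate.
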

	\begin{proof}
		To begin with, to obtain \eqref{eq:E-inter-unsymm}, we just note that \begin{equation*}
			\begin{split}
				E_{inter}[\omg,\tld\omg] =  \frac12\int_{ \supp(\omg) }  (-\tld\psi)  \omg \, dx =  \frac12\int_{ \supp(\omg) } (-x_{2}^{-1}\tld\psi) \, (x_{2}\omg) \, dx.  
			\end{split}
		\end{equation*}
		
		Let us now prove \eqref{eq:Energy-interaction-X}: cut the integral into regions $\{ |x-y| \ge L \}$ and $\{ |x-y| < L \}$ for some $L>0$ to be determined. In the former, use the linear bound on the log to get \begin{equation*}
			\begin{split}
				\iint_{  \{ |x-y| \ge L \} } \omg(x) \bfG(x,y) \tld{\omg}(y) dxdy \le \iint_{  \{ |x-y| \ge L \} } \omg(x) \frac{Cx_{2}y_{2}}{|x-y|^{2}} \tld{\omg}(y) dxdy \le CL^{-2}\nrm{ \omg}_{L^1_* }\nrm{\tld\omg}_{L^1_* }. 
			\end{split}
		\end{equation*} To bound the latter, we first observe $\nrm{ x_{2}^{1/3} \omg }_{L^{3/2}} \le \nrm{\omg}_{L^1_* }^{1/3} \nrm{\omg}_{L^{2}}^{2/3}. $ Then, we apply \eqref{eq_alpha} with $\alpha=\frac13$ to bound \begin{equation*}
			\begin{split}
				\bfG(x,y)\mathbf{1}_{ \{ |x-y| < L \} } \le \frac{C(x_2y_2)^{1/3}}{|x-y|^{2/3}}\mathbf{1}_{ \{ |x-y| < L \} } =: (x_2y_2)^{1/3} \tilde{\bfG}_{L}(x-y)
			\end{split}
		\end{equation*} and use Young's convolution inequality to obtain \begin{equation*}
			\begin{split}
				\left| \iint x_{2}^{1/3}\omg(x)\tld{\bfG}_{L}(x-y)y_{2}^{1/3} \tld\omg(y) dxdy \right| \le \nrm{\tld	\bfG_{L}}_{L^{3/2}}\nrm{ x_{2}^{1/3}\omg(x)}_{L^{3/2}} \nrm{y_{2}^{1/3} \tld\omg(y)   }_{L^{3/2}}
			\end{split}
		\end{equation*} with $\nrm{\tld	\bfG_{L}}_{L^{3/2}} = CL^{2/3}$ to derive the bound \begin{equation*}
			\begin{split}
				\left|\iint_{  \{ |x-y| < L \} } \omg(x) \bfG(x,y) \tld{\omg}(y) dxdy \right|\le CL^{2/3}\nrm{ \omg}_{L^1_* }^{1/3}\nrm{\omg}_{L^{2}}^{2/3}  \nrm{\tld\omg}_{L^1_* }^{1/3} \nrm{\tld\omg}_{L^{2}}^{2/3} .
			\end{split}
		\end{equation*} Optimizing in $L$ gives the claimed bound.  The proof of \eqref{eq:Energy-difference-X} directly follows from \eqref{eq:Energy-interaction-X}, by noting that $E[\omg] - E[\tld\omg] = E_{inter}[\omg- \tld\omg, \omg+ \tld\omg]$.
		
		For the last statement, one may just bound \begin{equation*}
			\begin{split}
				\log\left( 1 + \frac{4 x_2 y_2}{|x-y|^{2}} \right) \le \frac{Cx_2 y_2}{|x-y|^{2}} \le \frac{C}{D^{2}}
			\end{split}
		\end{equation*} for $x\in \supp(\omg)$ and $y\in \supp(\tld\omg)$.
	\end{proof}  
	
	Simply taking $\omg = \tld\omg$ gives the following energy inequality. 
	\begin{corollary}[Energy inequality]  \label{cor:energy}
		Let $\omg \in L^{2} \cap L^1_* (\bbR^2_+)$. Then, its kinetic energy is bounded and satisfies  \begin{equation}\label{eq:Energy-X}
			\begin{split}
				E[\omg] \le C\nrm{ \omg}_{L^1_* (\bbR^2_+)} \nrm{\omg}_{L^{2}(\bbR^2_+)} 
			\end{split}
		\end{equation} with a universal constant $C>0$. 
	\end{corollary}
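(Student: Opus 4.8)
The plan is to obtain this corollary as the diagonal case of Proposition~\ref{prop:Energy-X}. Comparing the double-integral formula for the kinetic energy with the definition~\eqref{eq:E-int-def} of the interaction energy, one has $E[\omg] = E_{inter}[\omg,\omg]$; note that the sign of $\omg$ plays no role here. Applying \eqref{eq:Energy-interaction-X} with $\tld\omg = \omg$ then gives immediately $E[\omg] \le C\nrm{\omg}_{L^1_* }\nrm{\omg}_{L^{2}}$ with the same universal constant, and the finiteness of the right-hand side shows that $E[\omg]$ is indeed bounded. So, granting Proposition~\ref{prop:Energy-X}, there is nothing further to do; I expect no obstacle.

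If a self-contained argument were preferred, I would reprove the relevant case of the estimate directly, which is the $\alpha = \tfrac13$ instance. Split the double integral defining $E[\omg]$ into the far region $\{|x-y| \ge L\}$ and the near region $\{|x-y| < L\}$, with $L>0$ to be optimized at the end. On the far region, the linear bound $\log(1+t)\le t$ together with $|x-\bar y|\ge|x-y|$ gives $\bfG(x,y)\le Cx_2y_2|x-y|^{-2}\le Cx_2y_2L^{-2}$, so the far contribution is at most $CL^{-2}\nrm{\omg}_{L^1_* }^{2}$. On the near region, I would use \eqref{eq_alpha} with $\alpha=\tfrac13$ to write $\bfG(x,y)\mathbf 1_{\{|x-y|<L\}} \le (x_2y_2)^{1/3}\tld{\bfG}_L(x-y)$ where $\tld{\bfG}_L(z) = C|z|^{-2/3}\mathbf 1_{\{|z|<L\}}$ has $\nrm{\tld{\bfG}_L}_{L^{3/2}(\bbR^2)} = CL^{2/3}$, apply Young's convolution inequality, and combine with the Hölder bound $\nrm{x_2^{1/3}\omg}_{L^{3/2}} \le \nrm{\omg}_{L^1_* }^{1/3}\nrm{\omg}_{L^{2}}^{2/3}$ to control the near contribution by $CL^{2/3}\nrm{\omg}_{L^1_* }^{2/3}\nrm{\omg}_{L^{2}}^{4/3}$. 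Adding the two and choosing $L$ so that the two terms balance produces exactly $E[\omg]\le C\nrm{\omg}_{L^1_* }\nrm{\omg}_{L^{2}}$.

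The only point requiring a line of computation is the interpolation inequality $\nrm{x_2^{1/3}\omg}_{L^{3/2}} \le \nrm{\omg}_{L^1_* }^{1/3}\nrm{\omg}_{L^{2}}^{2/3}$, which follows by writing $\int x_2^{1/2}|\omg|^{3/2}\,dx = \int (x_2|\omg|)^{1/2}\,|\omg|\,dx$ and applying Cauchy--Schwarz; and the elementary evaluation $\int_{|z|<L}|z|^{-1}\,dz = 2\pi L$ for the norm of $\tld{\bfG}_L$. Neither is an obstacle, so the statement should be regarded as an immediate consequence of Proposition~\ref{prop:Energy-X}.
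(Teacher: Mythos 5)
Your proposal is correct and coincides with the paper's own proof, which also obtains the corollary by simply taking $\tld\omg = \omg$ in \eqref{eq:Energy-interaction-X} of Proposition \ref{prop:Energy-X}; your optional self-contained argument is just the paper's proof of \eqref{eq:Energy-interaction-X} specialized to the diagonal case, and the two computational points you flag (the interpolation bound for $\nrm{x_2^{1/3}\omg}_{L^{3/2}}$ and the $L^{3/2}$ norm of $\tld{\bfG}_L$) both check out.
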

	
	\begin{remark}
		A similar computation gives a kinetic energy bound for $L^{p} \cap L^1_* $ vorticity for $p > 1$: \begin{equation*}
			\begin{split}
				E[\omg] \le C(p) \nrm{ \omg}_{L^1_* (\bbR^2_+)}^{\tht} \nrm{\omg}_{L^{p}(\bbR^2_+)}^{2-\tht}, \qquad \tht = \frac{4(p-1)}{3p-2}.
			\end{split}
		\end{equation*} This inequality should be useful in extending Theorem \ref{thm:N-Lamb-dipoles} to the case of energy maximizers with the $L^{p} \cap L^1_* $ constraint. 
	\end{remark}

	\subsection{Lamb dipoles}\label{subsec:Lamb}
	
	The Lamb dipole was  introduced  by 
	H. Lamb \cite[p. 231]{Lamb} and independently by S. A. Chaplygin \cite{Chap1903}. We have fixed the  {normalized Lamb dipole} to be 
	\begin{equation*} 
		\bar\omega_{Lamb}(x)=  \frac{-2c_L}{    J_0(c_L)} \, J_1(c_L r)  \mathbf{1}_{[0,1]}(r) \sin(\tht) 
	\end{equation*}  in the polar coordinate system $(r,\tht)$.  The function $ \frac{-2c_L}{    J_0(c_L)} \, J_1(c_L r) $ is strictly positive for $r < 1$, and in particular $\bar{\omg}_{Lamb} \ge 0$ on $\mathbb{R}^2_{+}$. Furthermore, it vanishes when $r = 1$, and therefore $\bar\omega_{Lamb}$ is supported in $|x| \le 1$ and Lipschitz continuous. 
	
	The stream function $\bar{\psi}_{Lamb}$ of the Lamb dipole can be explicitly computed, by writing it in the form $\lap (\bar{\psi}_{Lamb}) = \lap (f(r)\sin(\tht)) = \bar{\omg}_{Lamb}$ and solving the ODE in $r$ for $f(r)$. We have \begin{equation}\label{eq:stream-vorticity}
		\begin{split}
			\bar{\psi}_{Lamb}(x) = -r\sin(\tht) + \frac{2 J_{1}(c_{L}r) }{c_{L} J_{0}(c_{L})} \sin(\tht) , \qquad r \le 1 
		\end{split}
	\end{equation} and we can observe the following \textit{stream-vorticity relation} \begin{equation}\label{eq:stream-vorticity-rel}
		\begin{split}
			\bar\omega_{Lamb}=(c_L)^2 (-\bar\psi_{Lamb} -  x_2)_{+}
		\end{split}
	\end{equation} where $(f)_+ := \max\{ f , 0 \}$. This relation implies that $\bar\omega_{Lamb}$ is a traveling wave solution to \eqref{eq:2D-Euler} with velocity $\mathbf{e}_{1}$: $\bar\omega_{Lamb}(x - t\mathbf{e}_1)$ solves \eqref{eq:2D-Euler}. 
	
	We can explicitly compute the enstrophy, impulse, and energy of $\bar\omega_{Lamb}$. To begin with, the enstrophy is given by \begin{equation*}
		\begin{split}
			K[\bar\omega_{Lamb}] =\int_{\mathbb{R}^2_+} \bar\omega_{Lamb}^{2}(x) \,dx = \int_0^{\pi}\int_0^1  \left(\frac{-2c_L}{    J_0(c_L)} \, J_1(c_L r) \right)^{2} \sin^2(\tht) \, rdr d\tht  = \pi c_{L}^{2}. 
		\end{split}
	\end{equation*} The impulse is  $$\mu[\bar\omega_{Lamb}] =\int_{\mathbb{R}^2_+}x_2\bar\omega_{Lamb}(x) \,dx = \int_0^{\pi}\int_0^1 r\sin(\tht) \, \frac{-2c_L}{    J_0(c_L)} \, J_1(c_L r)  \sin(\tht) \, r dr d\tht =\pi.$$ Lastly, the energy can be computed using the {stream-vorticity relation \eqref{eq:stream-vorticity-rel}:} \begin{equation*}\begin{split}
			E[\bar\omega_{Lamb}] &=-\frac12\int_{\bbR^2_+}\bar\psi_{Lamb}(x)\bar\omg_{Lamb}(x)dx \\
			&=\frac12\int_{\bbR^2_+}(-\bar\psi_{Lamb}(x)-x_2)\bar\omg_{Lamb}(x)dx+\frac12\int_{\bbR^2_+}x_2\bar\omg_{Lamb}(x)dx =\pi.
		\end{split}
	\end{equation*}  
	
	Combining the main theorem of Abe--Choi  \cite[Theorems 1.3 and 1.5]{AC2019} with Corollary \ref{cor:energy} gives the following \textbf{sharp energy inequality}. 
	
	\begin{theorem}\label{thm:AC2019}
		For any $\kpp, \mu > 0$, the rescaled Lamb dipole $\omega_{Lamb}^{\kpp,\mu}$ defined in \eqref{eq:lamb-rescale-def} is the unique maximizer, up to shifts in the $x_{1}$-coordinate, of the energy in the class $\widetilde{\mathcal{A}}_{\kappa,\mu}$ defined in \eqref{eq:admissible}.  Furthermore, we have the following sharp energy inequality for $\omg \ge 0$ belonging to $L^{2} \cap L^1_* $: \begin{equation}\label{eq:energy-bound-sharp}
			\begin{split}
				E[\omg] \le C_{L} \nrm{\omg}_{L^1_* (\bbR^2_+)} \nrm{\omg}_{L^{2}(\bbR^2_+)} , \qquad C_{L} := \frac{1}{ \sqrt{\pi} c_{L}}. 
			\end{split}
		\end{equation} and the equality holds if and only if $\omg$ is an $x_{1}$-translate of $\omg_{Lamb}^{\kpp,\mu}$ for some $\kpp, \mu$.
	\end{theorem}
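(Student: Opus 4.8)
The plan is to quote the variational part of the statement wholesale and spend the remaining effort only on the scaling bookkeeping that turns it into the homogeneous inequality. Indeed, the assertion that $\omega_{Lamb}^{\kpp,\mu}$ is the energy maximizer in $\widetilde{\mathcal{A}}_{\kpp,\mu}$, unique up to shifts in $x_{1}$, is precisely \cite[Theorems 1.3 and 1.5]{AC2019}; so the only thing left is to pin down the value of the maximal energy in terms of $\kpp$ and $\mu$ and then upgrade the constrained maximization to the pointwise bound \eqref{eq:energy-bound-sharp}.

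First I would record how the relevant functionals transform under the rescaling $\omg\mapsto A\,\omg(R^{-1}\,\cdot)$ appearing in \eqref{eq:lamb-rescale-def}. Since the half-plane Green kernel $\bfG$ in \eqref{eq:G-def} is scale invariant, $\bfG(Rx,Ry)=\bfG(x,y)$, a change of variables gives $\nrm{A\,\omg(R^{-1}\cdot)}_{L^{2}}^{2}=A^{2}R^{2}\nrm{\omg}_{L^{2}}^{2}$, $\nrm{A\,\omg(R^{-1}\cdot)}_{L^1_*}=AR^{3}\nrm{\omg}_{L^1_*}$, and $E[A\,\omg(R^{-1}\cdot)]=A^{2}R^{4}E[\omg]$. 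Plugging in the explicit $A(\kpp,\mu), R(\kpp,\mu)$ one computes $A^{2}R^{2}=\kpp^{2}/(\pi c_{L}^{2})$, $AR^{3}=\mu/\pi$, and $A^{2}R^{4}=\kpp\mu/(\pi^{3/2}c_{L})$; combined with the values $K[\bar\omega_{Lamb}]=\pi c_{L}^{2}$, $\mu[\bar\omega_{Lamb}]=\pi$, $E[\bar\omega_{Lamb}]=\pi$ computed in \S\ref{subsec:Lamb}, this yields
\begin{equation*}
	\nrm{\omega_{Lamb}^{\kpp,\mu}}_{L^{2}}=\kpp, \qquad \nrm{\omega_{Lamb}^{\kpp,\mu}}_{L^1_*}=\mu, \qquad E[\omega_{Lamb}^{\kpp,\mu}]=C_{L}\,\kpp\,\mu, \quad C_{L}=\frac{1}{\sqrt{\pi}\,c_{L}}.
\end{equation*}
The point to note here is that the maximizer saturates both constraints defining $\widetilde{\mathcal{A}}_{\kpp,\mu}$.

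Then, for any $\omg\ge0$ with $\omg\in L^{2}\cap L^1_*(\bbR^2_+)$ --- which has finite energy by Corollary \ref{cor:energy} --- I would set $\kpp:=\nrm{\omg}_{L^{2}}$ and $\mu:=\nrm{\omg}_{L^1_*}$, observe that $\omg\in\widetilde{\mathcal{A}}_{\kpp,\mu}$ trivially, and conclude from \cite[Theorem 1.3]{AC2019} together with the display above that
\begin{equation*}
	E[\omg]\le E[\omega_{Lamb}^{\kpp,\mu}]=C_{L}\,\kpp\,\mu=C_{L}\,\nrm{\omg}_{L^1_*}\nrm{\omg}_{L^{2}},
\end{equation*}
which is \eqref{eq:energy-bound-sharp}. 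For the equality case, if $E[\omg]=C_{L}\nrm{\omg}_{L^1_*}\nrm{\omg}_{L^{2}}$ then $\omg$ attains the maximum over $\widetilde{\mathcal{A}}_{\nrm{\omg}_{L^2},\nrm{\omg}_{L^1_*}}$, so by the uniqueness clause \cite[Theorem 1.5]{AC2019} it is an $x_{1}$-translate of $\omega_{Lamb}^{\nrm{\omg}_{L^2},\nrm{\omg}_{L^1_*}}$; the converse is immediate since $E$, $\nrm{\cdot}_{L^{2}}$ and $\nrm{\cdot}_{L^1_*}$ are all invariant under $x_{1}$-translation.

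I do not expect any real obstacle inside the present excerpt: the genuine analytic content --- existence and compactness of maximizers, and the ODE/Bessel-function argument for uniqueness of the Lamb profile --- is entirely outsourced to \cite{AC2019}, and what remains is just the scaling identities above plus the elementary remark that every admissible $\omg$ sits in its own ``tight'' class $\widetilde{\mathcal{A}}_{\nrm{\omg}_{L^2},\nrm{\omg}_{L^1_*}}$. The only step deserving a moment of care is verifying that the maximizer saturates the constraints, which is exactly what promotes the class-dependent bound $C_{L}\kpp\mu$ to the scale-invariant inequality \eqref{eq:energy-bound-sharp}. If instead one had to reprove \cite{AC2019} from scratch, the hard part would be the uniqueness of the maximizer up to translations, which is where the explicit Bessel-function structure of $\bar\omega_{Lamb}$ and the stream-vorticity relation \eqref{eq:stream-vorticity} become essential.
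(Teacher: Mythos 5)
Your proposal is correct and follows essentially the same route as the paper: cite \cite[Theorems 1.3 and 1.5]{AC2019} for the variational characterization, compute $E[\omega_{Lamb}^{\kpp,\mu}]=C_L\kpp\mu$ with the constraints saturated, and place each $\omg$ in its own tight class $\widetilde{\mathcal{A}}_{\nrm{\omg}_{L^2},\nrm{\omg}_{L^1_*}}$ to obtain \eqref{eq:energy-bound-sharp} and its equality case. Your scaling identities check out; you have merely made explicit the bookkeeping the paper compresses into one sentence.
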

	
	\begin{proof}
		The first statement follows from Abe--Choi  \cite[Theorems 1.3 and 1.5]{AC2019}; see also \cite{CJY,ACJ}. Therefore, it suffices to prove \eqref{eq:energy-bound-sharp}, but this is an immediate consequence of this variational principle and Corollary \ref{cor:energy}, simply because the constant $C>0$ in the corollary {is} achieved by plugging in a Lamb dipole into the inequality. 
	\end{proof}

	\section{Decompositions and Bootstrap Assumptions}\label{sec:overview}
	
	\subsection{Assumptions and notations}\label{subsec:assump-nota} For convenience, we recall the assumptions of Theorem \ref{thm:N-Lamb-dipoles} and the notations that we shall use in the following. To begin with, we fix some $N \ge 2$, and recall that the $N$-Lamb dipole function was defined in \eqref{eq:N-dipole-def} by \begin{equation*}
		\begin{split}
			\overline{\omg}^{\boldsymbol{\bar\kpp}, \boldsymbol{\bar\mu}}_{Lamb, \mathbf{p}}(x) = \sum_{i=1}^{N} \omg_{Lamb}^{\bar\kpp_{i},\bar\mu_{i}}(x - p_{i} \mathbf{e}_{1} ).
		\end{split}
	\end{equation*}We had the following assumptions in Theorem \ref{thm:N-Lamb-dipoles}. \begin{itemize}
		\item The $L^{2}$-norms are strictly ordered: $\bar\kpp_{1} > \bar\kpp_{2} > \cdots > \bar\kpp_{N}.$ This means that the corresponding velocities of Lamb dipoles are also strictly ordered: $\bar{V}_{1} > \cdots > \bar{V}_{N}$, since the velocity is directly proportional to the $L^{2}$-norm, given by $\bar V_i =  C_{L}\bar\kpp_{i}=  \bar\kpp_{i}/(\sqrt{\pi}c_{L})$.
		\item The initial locations are sufficiently separated: the components of $\bar\bfp$ satisfy $\bar{p}_{i} > \bar{p}_{i+1} + D_{0}$ for all $1 \le i \le N - 1$. 
		\item The initial data $\omg_{0}$ is $\dlt_{0}$-close to $\overline{\omg}^{\boldsymbol{\bar\kpp}, \boldsymbol{\bar\mu}}_{Lamb, \mathbf{p}(0)}$ in the norm $L^{2} \cap L^1_* $, and bounded by $M$ in $L^{1}\cap L^{\infty}$. Furthermore, the measure of the support of $\omg_{0}$ is bounded by $M$. \end{itemize}
	In what follows, we let the constant $C>0$ depend on  $N, M, \boldsymbol{\bar{\kpp}}, \boldsymbol{\bar{\mu}}$, but never on the time variable $t$, or $\varepsilon, \delta_0, D_0$.  Whenever it becomes necessary, we shall take $\varep_{0}, \dlt$ smaller  and $D_{0}$ larger, but only in a way depending on $N, M, \boldsymbol{\bar{\kpp}}, \boldsymbol{\bar{\mu}}$. Here, $\varep$ is the maximal distance between $\omg$ and the $N$-Lamb dipole for all $t\ge0$, see \eqref{eq:varep-close}, with $0<\varep<\varep_{0}$, and $\dlt>0$ is the smallness parameter introduced for convenience in our bootstrap assumptions, see \eqref{eq:B1}--\eqref{eq:B3} below. Lastly, we shall take $\dlt_{0} = \dlt/C_{0}$, for some large constant $C_{0} = C_{0}(N, M, \boldsymbol{\bar{\kpp}}, \boldsymbol{\bar{\mu}}) \ge 100N$ determined through the bootstrap procedure. Here is a dependency diagram:  \begin{equation*}\label{eq:params-diagram}
		\begin{split}
			N, M, \boldsymbol{\bar{\kpp}}, \boldsymbol{\bar{\mu}} \longrightarrow \varep_{0} \longrightarrow \dlt \longrightarrow D_{0} \longrightarrow \dlt_{0}. 
		\end{split}
	\end{equation*}
	
	\subsection{Decomposition into $N$ vortices}\label{subsec:decomp-N-vortices}
	We define the time-dependent ``borders'' \begin{equation}\label{eq:L-def}
		\begin{split}
			L_{i}(t) := \frac12\left( \bar{p}_{i} + \bar{p}_{i+1} + (\bar{V}_{i} + \bar{V}_{i+1})t \right), \qquad 1 \le i \le N-1. 
		\end{split}
	\end{equation} This is simply the midpoint of the centers of the Lamb dipoles starting from $\bar{p}_{i}, \bar{p}_{i+1}$ and moving with speed $\bar{V}_{i}$ and $\bar{V}_{i+1}$, respectively.  It will be convenient to also set $L_{0} := +\infty$ and $L_{N} := -\infty$, and we decompose the solution by \begin{equation}\label{eq:vort-decomp-def}
		\begin{split}
			\omg(t,\cdot) = \sum_{i=1}^{N} \omg_{i}(t,\cdot), \qquad \omg_{i}(t,\cdot) := \omg(t,\cdot) \mathbf{1}_{ \{ L_{i}(t) \le x_{1} < L_{i-1}(t)    \} }(\cdot). 
		\end{split}
	\end{equation}  
	See Figure~\ref{fig_borders} for an illustration of $L_i(t)$ and $\omega_i(t,\cdot)$ in the case of $N=3$.
	
	\begin{figure}[htbp]
		\includegraphics[scale=1]{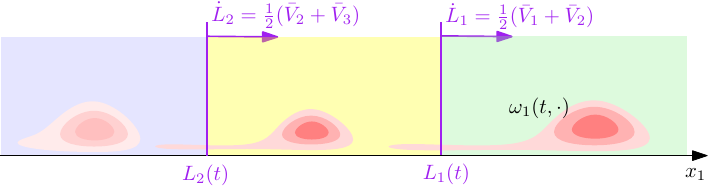} 
		\caption{\label{fig_borders} An illustration of the borders $L_i$ and the vorticities $\omega_i(t,\cdot)$ in the case of $N=3$. Here $\omega_1,\omega_2,\omega_3$ are the vorticities lying on the green, yellow and blue backgrounds respectively.} 
	\end{figure}

	The  velocity and stream function corresponding to $\omg_{i}$ are denoted by $u_{i}$ and $\psi_{i}$, respectively. It will be convenient to introduce the partial sums:   \begin{equation}\label{eq:vort-decomp-def2}
		\begin{split}
			\omg_{\ge i}(t,\cdot) := \sum_{i \le j \le N} \omg_{j}(t,\cdot) = \omg(t,\cdot) \mathbf{1}_{ \{  x_{1} < L_{i-1}(t)    \} }(\cdot), \qquad \omg_{\le i}(t,\cdot) := \sum_{1 \le j \le i} \omg_{j}(t,\cdot) = \omg(t,\cdot) \mathbf{1}_{ \{ L_{i}(t) \le  x_{1} \} }(\cdot)
		\end{split}
	\end{equation} and denote $u_{\ge i}, u_{\le i}$ and $\psi_{\ge i}, \psi_{\le i}$ as the corresponding sums of $u_{j}$ and $\psi_{j}$, respectively.  For simplicity, we set \begin{equation}\label{eq:vort-decomp-params}
		\begin{split}
			\kpp_{i}(t) := \nrm{\omg_{i}(t,\cdot)}_{L^{2}}, \quad K_{i}(t) := \nrm{\omg_{i}(t,\cdot)}_{L^{2}}^{2}, \quad \mu_{i}(t) := \nrm{\omg_{i}(t,\cdot)}_{L^1_* },  \quad E_{i}(t) := E[\omg_{i}(t,\cdot)]. 
		\end{split}
	\end{equation} Note the additivity of enstrophy and impulse:  \begin{equation}\label{eq:vort-decomp-params2}
		\begin{split}
			K_{\le i}(t) := \sum_{1 \le j \le i} K_{j}(t) = \nrm{ \omg_{ \le i }(t,\cdot) }_{L^{2}}^{2}, \qquad \mu_{\le i}(t) := \sum_{1 \le j \le i} \mu_{j}(t) = \nrm{ \omg_{ \le i }(t,\cdot) }_{L^1_* }. 
		\end{split}
	\end{equation} Similar formulae hold for $K_{\ge i}$ and $\mu_{\ge i}$, defined by the corresponding sums of $K_{j}$ and $\mu_{j}$ for $i \le j \le N$. In the subsequent sections, we shall frequently use the following computation.
	\begin{lemma}[Initial closeness] \label{lem:ini-close}
		At the initial time, we have \begin{equation}\label{eq:ini-smallness}
			\begin{split}
				\sum_{i=1}^{N} \left(|K_{i}(0) - \bar{K}_{i}| + |\kpp_{i}(0) - \bar{\kpp}_{i}| + |\mu_{i}(0) - \bar{\mu}_{i}| + |E_{i}(0) - C_{L} \bar{\kpp}_{i}\bar{\mu}_{i}| \right) \le \frac{C}{C_{0}} \dlt 
			\end{split}
		\end{equation} for any $C_{0}>0$ by taking $\dlt_{0} := \dlt/C_{0}$ for some $C>0$ depending on $M, N, \boldsymbol{\bar{\kpp}}, \boldsymbol{\bar{\mu}}$. 
	\end{lemma}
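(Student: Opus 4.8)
The plan is to reduce everything to one observation: once $D_0$ is taken large enough, the decomposition \eqref{eq:vort-decomp-def} at $t=0$ separates the $N$ Lamb dipoles of the reference profile \emph{exactly}. Indeed, $\omg_{Lamb}^{\bar\kpp_i,\bar\mu_i}(\cdot-\bar p_i\mathbf{e}_1)$ is supported in the disk of radius $R(\bar\kpp_i,\bar\mu_i)$ centered at $\bar p_i\mathbf{e}_1$, whereas $L_i(0)=\tfrac12(\bar p_i+\bar p_{i+1})$ and $L_{i-1}(0)=\tfrac12(\bar p_{i-1}+\bar p_i)$ lie at distance at least $\tfrac12 D_0$ from $\bar p_i$. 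Hence, choosing $D_0>2\max_{1\le i\le N}R(\bar\kpp_i,\bar\mu_i)$ --- a threshold depending only on $\boldsymbol{\bar\kpp},\boldsymbol{\bar\mu}$ --- forces $\supp(\omg_{Lamb}^{\bar\kpp_i,\bar\mu_i}(\cdot-\bar p_i\mathbf{e}_1))\subset\{L_i(0)<x_1<L_{i-1}(0)\}$ for every $i$. Since these strips are pairwise disjoint and each $\omg_{Lamb}^{\bar\kpp_j,\bar\mu_j}$ is nonnegative, the cutoff in \eqref{eq:vort-decomp-def} picks out precisely the $i$-th dipole from $\overline{\omg}^{\boldsymbol{\bar\kpp},\boldsymbol{\bar\mu}}_{Lamb,\bar{\bfp}}$, so that
\[
\omg_i(0,\cdot)-\omg_{Lamb}^{\bar\kpp_i,\bar\mu_i}(\cdot-\bar p_i\mathbf{e}_1)=(\omg_0-\overline{\omg}^{\boldsymbol{\bar\kpp},\boldsymbol{\bar\mu}}_{Lamb,\bar{\bfp}})\,\mathbf{1}_{\{L_i(0)\le x_1<L_{i-1}(0)\}}.
\]
As multiplication by an indicator is a contraction in both $L^2$ and $L^1_*$, assumption \eqref{eq:ini-assumption} yields $\nrm{\omg_i(0,\cdot)-\omg_{Lamb}^{\bar\kpp_i,\bar\mu_i}(\cdot-\bar p_i\mathbf{e}_1)}_{L^2\cap L^1_*}\le\dlt_0=\dlt/C_0$ for each $i$.

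From this single bound the four estimates follow by soft arguments. The triangle inequality for $\nrm{\cdot}_{L^2}$ and $\nrm{\cdot}_{L^1_*}$, together with $\nrm{\omg_{Lamb}^{\bar\kpp_i,\bar\mu_i}}_{L^2}=\bar\kpp_i$ and $\nrm{\omg_{Lamb}^{\bar\kpp_i,\bar\mu_i}}_{L^1_*}=\bar\mu_i$, gives $|\kpp_i(0)-\bar\kpp_i|\le\dlt/C_0$ and $|\mu_i(0)-\bar\mu_i|\le\dlt/C_0$. For the enstrophy, write $K_i(0)-\bar K_i=\kpp_i(0)^2-\bar\kpp_i^2=(\kpp_i(0)-\bar\kpp_i)(\kpp_i(0)+\bar\kpp_i)$ and use the crude a priori bound $\kpp_i(0)\le\nrm{\omg_0}_{L^2}\le(\nrm{\omg_0}_{L^1}\nrm{\omg_0}_{L^\infty})^{1/2}\le M$ to obtain $|K_i(0)-\bar K_i|\le(M+\bar\kpp_i)\dlt/C_0\le C\dlt/C_0$. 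For the energy, apply \eqref{eq:Energy-difference-X} with $\omg=\omg_i(0,\cdot)$ and $\tld\omg=\omg_{Lamb}^{\bar\kpp_i,\bar\mu_i}(\cdot-\bar p_i\mathbf{e}_1)$: the factor involving $\omg-\tld\omg$ is at most $\dlt/C_0$, while $(\nrm{\omg+\tld\omg}_{L^1_*}\nrm{\omg+\tld\omg}_{L^2})^{1/2}\le C$ once $\dlt$ (hence $\dlt_0$) is small, using $\nrm{\omg_i(0)}_{L^1_*}\le\nrm{\omg_0}_{L^1_*}\le\sum_j\bar\mu_j+\dlt_0$ and $\nrm{\omg_i(0)}_{L^2}\le M$. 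Since $E[\omg_{Lamb}^{\bar\kpp_i,\bar\mu_i}]=C_L\bar\kpp_i\bar\mu_i$ --- the equality case of Theorem \ref{thm:AC2019}, or the direct computation in \S\ref{subsec:Lamb} --- this gives $|E_i(0)-C_L\bar\kpp_i\bar\mu_i|\le C\dlt/C_0$. Summing the four estimates over $1\le i\le N$ and absorbing the factor $N$ into $C$ yields \eqref{eq:ini-smallness}.

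There is no genuine obstacle here; the only thing demanding care is the order in which the parameters are frozen, according to the diagram $N,M,\boldsymbol{\bar\kpp},\boldsymbol{\bar\mu}\to\varep_0\to\dlt\to D_0\to\dlt_0$. One must check that the separation $D_0$ needed to disjointify the dipoles' supports may be chosen depending on $\boldsymbol{\bar\kpp},\boldsymbol{\bar\mu}$ alone (and not on $\dlt$), which is clear since the radii $R(\bar\kpp_i,\bar\mu_i)$ do not involve $\dlt$; and that the constants $C$ arising in the enstrophy and energy steps likewise depend only on $M,N,\boldsymbol{\bar\kpp},\boldsymbol{\bar\mu}$, which holds because all the a priori bounds used ($\nrm{\omg_0}_{L^2}\le M$, $\nrm{\omg_0}_{L^1_*}\le\sum_j\bar\mu_j+1$, and so on) are of that type as soon as $\dlt_0\le 1$.
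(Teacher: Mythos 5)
Your proof is correct and fills in, in the intended way, what the paper dismisses in one line as ``a direct consequence of the initial data assumptions'': the key point that for $D_{0}>2\max_{i}R(\bar\kpp_{i},\bar\mu_{i})$ the strips $\{L_{i}(0)\le x_{1}<L_{i-1}(0)\}$ isolate the individual dipoles, so that each $\omg_{i}(0)-\omg_{Lamb}^{\bar\kpp_{i},\bar\mu_{i}}(\cdot-\bar p_{i}\mathbf{e}_{1})$ is a restriction of $\omg_{0}-\overline{\omg}^{\boldsymbol{\bar\kpp},\boldsymbol{\bar\mu}}_{Lamb,\bar{\bfp}}$ and hence $\dlt_{0}$-small in $L^{2}\cap L^{1}_{*}$, after which the four estimates follow from the triangle inequality, interpolation, and \eqref{eq:Energy-difference-X} exactly as you describe. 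Your attention to the parameter ordering (that the threshold for $D_{0}$ depends only on $\boldsymbol{\bar\kpp},\boldsymbol{\bar\mu}$) is consistent with the paper's dependency diagram.
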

	\begin{proof}
		This is a direct consequence of the initial data assumptions from Theorem \ref{thm:N-Lamb-dipoles}. 
	\end{proof}
	
	\subsection{Bootstrap assumptions} For $t\ge0$, we take the following bootstrap assumptions \begin{equation}\label{eq:B1}\tag{B1}
		\begin{split}
			K_{ \le i }(t) \le   K_{\le i}(0) + \dlt , \qquad 1 \le i \le N-1, 
		\end{split}
	\end{equation} \begin{equation}\label{eq:B2}\tag{B2}
		\begin{split}
			\mu_{ \le i }(t) \le   \mu_{\le i }(0) + \dlt, \qquad 1 \le i \le N-1, 
		\end{split}
	\end{equation} and \begin{equation}\label{eq:B3}\tag{B3}
		\begin{split}
			E_{i}(t) \ge E_{i}(0) - \dlt , \qquad 1 \le i \le N. 
		\end{split}
	\end{equation}
	Note that the lower bound assumption on the energy is imposed for all $1 \le i \le N$; {unlike the enstrophy and impulse, the energy has interaction terms, so information on $N-1$ of them does not directly give bounds on the remaining one.}
	
	\subsection{More refined decompositions of the vortices}
	
	In the following, we will introduce two different ways to decompose $\omega(t,\cdot)$ further. The decompositions are based on two different viewpoints, and as we will see, the interplay between these two viewpoints will be crucial in the proof. 
	
	The first way takes an Eulerian viewpoint by decomposing each $\omega_i(t,\cdot)$ into the parts that are more ``central'' in the interval $[L_i(t), L_{i-1}(t)]$ and the two ``error terms'' that are close to the left border $L_i(t)$ and right border $L_{i-1}(t)$.  The second way takes a Lagrangian viewpoint, by decomposing $\omega_{\geq i}(t)$ into the ``gained'' vorticity that originally comes from the right of the border $L_{i-1}(0)$ at $t=0$, and the rest that originally comes from the left of $L_{i-1}(0)$. See Figure~\ref{fig_euler} and \ref{fig_lagrange} for  illustrations of the two decompositions.

	\subsubsection{Eulerian decomposition: left, center, and right} \label{sec_euler} We  set additional borderlines \begin{equation}\label{eq:Lpm-def}
		\begin{split}
			L^{\pm}_{i}(t) := L_{i}(t) \pm \frac1{10}\left( D_{0} + (\bar{V}_{i} - \bar{V}_{i+1})t \right), \qquad 1 \le i \le N-1. 
		\end{split}
	\end{equation} Then, we introduce the left and right ``error terms'' \begin{equation}\label{eq:omg-err-def}
		\begin{split}
			\omg_{i,err}^{(\ell)}(t,\cdot) := \omg_{i}(t,\cdot) \mathbf{1}_{ \{  L_{i}(t) \le x_{1} < L_{i}^{+}(t) \} }(\cdot), \quad \omg_{i,err}^{(r)}(t,\cdot) := \omg_{i}(t,\cdot) \mathbf{1}_{ \{  L_{i-1}^{-}(t) \le x_{1} < L_{i-1}(t) \} }(\cdot),
		\end{split}
	\end{equation} for all $1 \le i \le N$, where we set $\omg_{1,err}^{(r)} := 0$ and $\omg_{N,err}^{(\ell)} := 0$ to unify notation. This defines the ``central'' part $\omg_{i,cen}$ by the decomposition \begin{equation}\label{eq:omg-cen-def}
		\begin{split}
			\omg_{i,cen}(t,\cdot) := \omg_{i}(t,\cdot) \mathbf{1}_{ \{  L_{i}^{+}(t) \le x_{1} < L_{i-1}^{-}(t) \}},
		\end{split}
	\end{equation} so that we have $\omg_{i}(t,\cdot) = \omg_{i,err}^{(\ell)}(t,\cdot) + \omg_{i,cen}(t,\cdot) + \omg_{i,err}^{(r)}(t,\cdot)$ for all $1 \le i \le N$; see Figure~\ref{fig_euler} for an illustration.

	\begin{figure}[htbp]
		\includegraphics[scale=1]{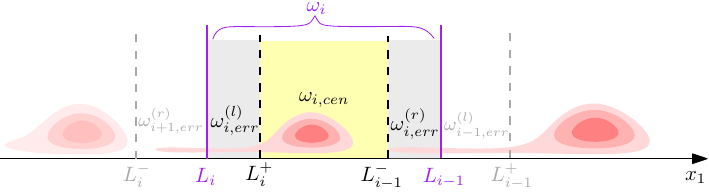} 
		\caption{\label{fig_euler} An illustration of the Eulerian decomposition of $\omega_i$ into the ``central part'' $\omega_{i,cen}$ (on yellow background) and the two ``error terms''  $\omega_{i,err}^{(\ell)}$, $\omega_{i,err}^{(r)}$ on the left and right (on gray background).} 
	\end{figure}

	\subsubsection{Lagrangian decomposition: gain and the rest} For $i \ge 2$, we decompose $\omg_{\ge i}$ by  \begin{equation}\label{eq:omg-gain}
		\begin{split}
			\omg_{\ge i}(t,\cdot) =: {\omg}_{\ge i, *}(t,\cdot) + \omg_{\ge i,gain}(t,\cdot)
		\end{split}
	\end{equation} where $\omg_{\ge i,gain}$ is defined by the part of $\omg_{\ge i}$ at time $t$ which was located to the right of {$L_{i-1}(0)$} at the initial time; whereas ${\omg}_{\ge i, *}$ was located to the left of {$L_{i-1}(0)$} at the initial time; see Figure~\ref{fig_lagrange} for an illustration. More precisely, we define \begin{equation}\label{eq:omg-gain-def}
		\begin{split}
			{\omg}_{\ge i, gain}(t,\cdot) := \omg(t,\cdot) \mathbf{1}_{ \{ x_{1} <{ L_{i-1}(t)} \} } \, \mathbf{1}_{ \{\Phi^{-1}_{1}(t,x) \ge {L_{i-1}(t)}\}},
		\end{split}
	\end{equation} where $\Phi^{-1}(t,x) = (\Phi^{-1}_1(t,x),\Phi^{-1}_2(t,x))$ is the inverse of the flow map $\Phi$ at time $t$; that is, $\Phi^{-1}(t,\Phi(t,x)) = x = \Phi(t,\Phi^{-1}(t,x))$.  
	Here, $\Phi$ is the flow map corresponding to $u(t,x) = \sum_{i=1}^{N}u_{i}(t,x)$; \begin{equation*}
		\begin{split}
			\frac{d}{dt} \Phi(t,x) = u(t,\Phi(t,x)), \qquad \Phi(0,x) = x. 
		\end{split}
	\end{equation*} 
	As a consequence of our bootstrap assumptions, we will show that fluid particles can only cross each $L_{i-1}(t)$ from right to left\footnote{This will be proved in Lemma~\ref{lem:u-border}.}, and therefore $\omg_{\ge i, gain}$ will be identified precisely as the part of $\omg_{ \ge i}$ which have crossed the line $L_{i-1}(t')$ to the left for some $0\le t' \le t$.

	\begin{figure}[htbp]
		\includegraphics[scale=1]{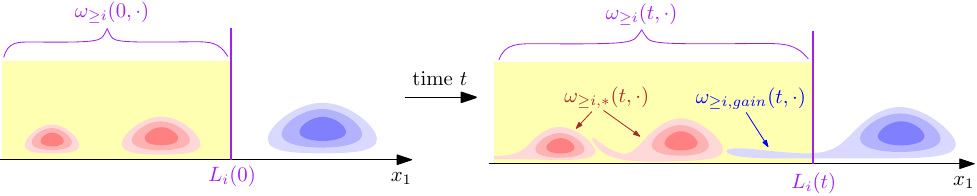} 
		\caption{\label{fig_lagrange} An illustration of the Lagrangian decomposition of $\omega_{\geq i}(t,\cdot)$ into the ``gained'' part $\omega_{\geq i, gain}$ and the rest $\omega_{\geq i,*}$. In this figure, we color the vorticity in red if the fluid particle is initially located on the left of {$L_{i-1}(0)$} at initial time; and blue otherwise. So $\omega_{\geq i,*}(t,\cdot)$ include all the red vorticity on yellow background; whereas $\omega_{\geq i,gain}(t,\cdot)$ include all the blue vorticity on yellow background.}  
	\end{figure}
	
	\section{Consequences of the Bootstrap Assumptions}\label{sec:boot-con}

	Throughout this section, we assume that the bootstrap assumptions \eqref{eq:B1}, \eqref{eq:B2}, and \eqref{eq:B3} hold for a time interval $[0,T]$ for some $T>0$. Then, for $t \in [0,T]$, we obtain various information on the solution, in the form of five lemmas. We emphasize again that the constant $C$ appearing in the statements does not depend on $T$.  In all of the lemmas, it is omitted that ``there exists $\varep_{0}>0$ depending on  $N, M, \boldsymbol{\bar{\kpp}}, \boldsymbol{\bar{\mu}}$ such that if we take $\dlt>0$ sufficiently small, $D_{0}$ sufficiently large depending on $\varep_{0}$, and $0<\varep<\varep_{0}$ then the following holds on $[0,T]$.''
	
	\subsection{Approximating by Lamb dipoles}\label{subsec:approx-Lamb-dipoles}
	
	In this section, under the bootstrap assumptions \eqref{eq:B1}--\eqref{eq:B3}, we show that each vortex $\omg_{i}$ is well-approximated by the corresponding initial Lamb dipole, and obtain an estimate on the shift. 
	
	\begin{lemma}\label{lem:approx-Lamb-dipoles}
		%Given $\varepsilon > 0$, there exists $\dlt>0$ such that if \eqref{eq:B1}, \eqref{eq:B2}, and \eqref{eq:B3} hold on $[0,T)$, then on the same time interval,  
		If the bootstrap assumptions \eqref{eq:B1}--\eqref{eq:B3} hold on $[0,T]$, then for all $t\in [0,T]$ we have \begin{equation}\label{eq:params-dlt-close}
			\begin{split}
				|K_{i}(t) - \bar{K}_{i}| + |\mu_{i}(t) - \bar{\mu}_{i}| + |E_{i}(t) - C_{L} \bar\kpp_{i} \bar\mu_{i} | \le C \dlt, 
			\end{split}
		\end{equation}\begin{equation}\label{eq:omg-1-Lamb-epsilon}
			\begin{split}
				\nrm{ \omg_{i}(t) - \bar\omg_{i,Lamb}(t) }_{L^{2} \cap  L^1_*  } \le \frac{\varepsilon}{N} , \qquad \text{where} \quad 	\bar\omg_{i,Lamb}(t,x):=  \omg_{Lamb}^{\bar\kpp_{i}, \bar\mu_{i}}(\cdot - p_{i}(t) \mathbf{e}_{1} )
			\end{split}
		\end{equation} for some $p_{i}(t)$ satisfying  \begin{equation}\label{eq:p1-B}
			\begin{split}
				|p_{i}(t)- \bar{p}_{i}  - \bar{V}_{i}\, t | \le C\varep^{1/2}(1+t) 
			\end{split}
		\end{equation} for all $1 \le i \le N$. 
	\end{lemma}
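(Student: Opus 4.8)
The plan is to prove the three assertions in the order they are stated: first the scalar bounds \eqref{eq:params-dlt-close}, which turn the bootstrap hypotheses \eqref{eq:B1}--\eqref{eq:B3} into sharp control of the individual enstrophies, impulses, and energies; then the $L^{2}\cap L^1_* $ closeness \eqref{eq:omg-1-Lamb-epsilon}, which is a direct application of the stability input Proposition \ref{prop:Lamb-energy-estimate}; and finally the shift estimate \eqref{eq:p1-B}, which requires tracking a suitable ``center'' of $\omg_{i}$.

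For the scalar bounds I would begin with the energies. Using \eqref{eq:Energy-difference-X} with the initial closeness $\nrm{\omg_{0}-\overline{\omg}^{\boldsymbol{\bar\kpp},\boldsymbol{\bar\mu}}_{Lamb,\bar{\bfp}}}_{L^{2}\cap L^1_* }\le\dlt_{0}$, together with the fact that the $N$ initial Lamb profiles have pairwise disjoint supports separated by $\gtrsim D_{0}$ (so that \eqref{eq:Energy-interaction-support-separation} bounds their mutual interaction energy by $CD_{0}^{-2}$), one gets $E_{0}=\sum_{i}C_{L}\bar\kpp_{i}\bar\mu_{i}+O(\dlt_{0}+D_{0}^{-2})$. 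Since $E_{0}=\sum_{i}E_{i}(t)+E_{inter}(t)$ with $E_{inter}(t)\ge0$, and since \eqref{eq:B3} and Lemma \ref{lem:ini-close} give $E_{i}(t)\ge C_{L}\bar\kpp_{i}\bar\mu_{i}-C\dlt$ for each $i$, subtracting yields the matching upper bound, hence $|E_{i}(t)-C_{L}\bar\kpp_{i}\bar\mu_{i}|\le C\dlt$ after taking $\dlt_{0}=\dlt/C_{0}$ and $D_{0}$ large. The sharp energy inequality \eqref{eq:energy-bound-sharp} then gives $C_{L}\kpp_{i}(t)\mu_{i}(t)\ge E_{i}(t)$, so $\kpp_{i}(t)\mu_{i}(t)\ge\bar\kpp_{i}\bar\mu_{i}-C\dlt$. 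I would now close the individual bounds on $\kpp_{i},\mu_{i}$ by induction on $i$: assuming $|\kpp_{j}(t)-\bar\kpp_{j}|+|\mu_{j}(t)-\bar\mu_{j}|\le C\dlt$ for $j<i$, the partial-sum hypotheses \eqref{eq:B1}, \eqref{eq:B2} (and conservation of enstrophy/impulse when $i=N$) force $\kpp_{i}(t)\le\bar\kpp_{i}+C\dlt$ and $\mu_{i}(t)\le\bar\mu_{i}+C\dlt$; combining these with the lower bound on the product $\kpp_{i}(t)\mu_{i}(t)$ then forces the matching lower bounds (if $\kpp_{i}(t)<\bar\kpp_{i}-C'\dlt$ then $\mu_{i}(t)>\bar\mu_{i}+C\dlt$, a contradiction). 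Since there are only $N$ steps, the implicit constants stay bounded, and $|K_{i}(t)-\bar K_{i}|=|\kpp_{i}(t)-\bar\kpp_{i}|\,(\kpp_{i}(t)+\bar\kpp_{i})\le C\dlt$.

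For \eqref{eq:omg-1-Lamb-epsilon} I would apply Proposition \ref{prop:Lamb-energy-estimate} to each $\omg_{i}(t,\cdot)$ with $\kpp=\bar\kpp_{i}$, $\mu=\bar\mu_{i}$, the given $M$, and $\varep_{1}=\varep/N$, taking $\dlt$ small enough that $C\dlt\le\min_{i}\dlt_{1}(\bar\kpp_{i},\bar\mu_{i},M,\varep/N)$ (legitimate since $\dlt$ may be chosen in terms of $\varep$). Step 1 verifies all hypotheses: $\omg_{i}(t)\ge0$, $\nrm{\omg_{i}(t)}_{L^{1}}\le\nrm{\omg_{0}}_{L^{1}}\le M$, $\omg_{i}(t)\in\widetilde{\mathcal{A}}_{\bar\kpp_{i}+\dlt_{1},\bar\mu_{i}+\dlt_{1}}$, and $E_{i}(t)\ge E[\omg_{Lamb}^{\bar\kpp_{i},\bar\mu_{i}}]-\dlt_{1}$, so the proposition produces a shift $p_{i}(t)$ with the stated bound, and at $t=0$ one may take $p_{i}(0)=\bar p_{i}$ up to $O(\dlt_{0})$ because, for $D_{0}$ large, $\overline{\omg}^{\boldsymbol{\bar\kpp},\boldsymbol{\bar\mu}}_{Lamb,\bar{\bfp}}$ restricted to the strip $\{L_{i}(0)\le x_{1}<L_{i-1}(0)\}$ is exactly $\omg_{Lamb}^{\bar\kpp_{i},\bar\mu_{i}}(\cdot-\bar p_{i}\mathbf e_{1})$. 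Finally, for the shift estimate \eqref{eq:p1-B} I would track a center $c_{i}(t)$ of $\omg_{i}(t)$ (a centroid of a fixed super-level set, whose shape is controlled by the $L^{2}$-closeness), following the Lagrangian scheme of \cite{JYZ}: on one hand \eqref{eq:omg-1-Lamb-epsilon} and the localization of the Lamb profile give $|c_{i}(t)-p_{i}(t)|\le C\varep^{1/2}$; on the other, the motion of $c_{i}$ is driven by the velocity on $\supp\omg_{i}$, which is the self-induced velocity of $\omg_{i}$ (that of the traveling Lamb dipole up to an $O(\varep^{1/2})$ correction from the perturbation, controlled via the interaction-energy bounds of Proposition \ref{prop:Energy-X}) plus the velocity induced by $\omg_{j}$, $j\ne i$, which by Lemma \ref{lem:vel-decay} and the linear separation of the strips is $O((D_{0}+ct)^{-2})$ with time-integrable contribution. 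Hence $\dot c_{i}(t)=\bar V_{i}+O(\varep^{1/2})+O((D_{0}+ct)^{-2})$, and integrating from $0$ (with $|c_{i}(0)-\bar p_{i}|\le C(\dlt_{0}+\varep^{1/2})$) gives \eqref{eq:p1-B}; this is run as an auxiliary bootstrap on $p_{i}$, the a priori version of \eqref{eq:p1-B} being used only to keep $\supp\omg_{i}$ near the center of its strip and away from the borders $L_{i}(t)$.

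The main obstacle is Step 3. Steps 1 and 2 are essentially algebraic manipulations of \eqref{eq:B1}--\eqref{eq:B3} combined with the variational input of \cite{AC2019}, but Step 3 requires relating the non-smooth quantity $p_{i}(t)$, defined through an infimum, to a differentiable centroid, and controlling both the flux of vorticity through the moving borders $L_{i}(t)$ and the velocity generated by the perturbation $\omg_{i}-\bar\omg_{i,Lamb}(t)$ near $\supp\omg_{i}$; this is precisely where the Lagrangian bookkeeping and the sharp functional inequalities of the paper must be combined, and I would follow the argument of \cite{JYZ} there.
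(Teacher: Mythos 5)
Your Steps 1 and 2 follow the paper's proof almost verbatim: the paper runs the same induction on $i$ (using \eqref{eq:B1}--\eqref{eq:B2} plus the already-established lower bounds for $j<i$, and the conservation laws for $i=N$, to get upper bounds on $\kpp_{i},\mu_{i}$; then \eqref{eq:B3} plus the sharp inequality $E_{i}\le C_{L}\kpp_{i}\mu_{i}$ to force the matching lower bounds), and applies Proposition \ref{prop:Lamb-energy-estimate} with $\varep_{1}\sim\varep/N$ exactly as you propose. Your derivation of the upper bound on $E_{i}(t)$ from total energy conservation and $E_{inter}\ge0$ is a harmless variant of the paper's route via $E_{i}\le C_{L}\kpp_{i}(t)\mu_{i}(t)$. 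The real divergence is in Step 3. The paper does not use a centroid of a super-level set: it fixes an \emph{odd, compactly supported} test function $g$ (equal to $x_{1}$ on $|x_{1}|\le 3R_{1}$, saturating at $4R_{1}$, vanishing beyond $D_{0}/4$), defines $H(t,p)=\int\omg(t,x+p\bfe_{1})g(x_{1})\,dx$, and obtains $p_{i}(t)$ as the unique zero of $H(t,\cdot)$ near $\tau_{i}(t)$ via a quantitative implicit function theorem ($\rd_{p}H\approx -m_{1}<0$); differentiability is then automatic and $\dot p_{i}=-\rd_{t}H/\rd_{p}H=\bar V_{i}+O(\varep_{1}^{1/2})$ follows from testing the traveling-wave identity $(u_{Lamb}-\bar V_{i}\bfe_{1})\cdot\nb\omg_{Lamb}=0$ against $g$. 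This is also run as a time-induction in steps of length $T_{1}$, after a preliminary argument showing $\tau_{i}$ cannot jump by more than $R_{1}$ over such a step.

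Two concrete points where your Step 3, as written, would need repair. First, the intermediate claim $|c_{i}(t)-p_{i}(t)|\le C\varep^{1/2}$ uniformly in $t$ is too strong: the perturbation $\omg_{i}-\bar\omg_{i,Lamb}$ is controlled only in $L^{2}\cap L^1_*$, so its contribution to a super-level set can sit anywhere in the strip $[L_{i}(t),L_{i-1}(t)]$, whose width grows like $D_{0}+ct$; the resulting displacement of the centroid is of order (measure of the polluting set)$\times$(strip width), which grows linearly in $t$. This does not destroy the final bound \eqref{eq:p1-B} (which itself allows $C\varep^{1/2}(1+t)$), but the argument must be restructured so that only the time-\emph{integrated} drift is estimated. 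Second, the super-level set of $\omg_{i}$ is not transported by the flow: it is the intersection of a transported set with the moving strip, so $\dot c_{i}$ acquires boundary-flux terms at $L_{i}(t)$ and $L_{i-1}(t)$ weighted by the distance from the centroid (again of order $D_{0}+ct$), and these must be summed against the total crossed area, which is only $O(\dlt)$. Both issues are precisely what the compact support and the plateau of the paper's $g$ are designed to neutralize, so I would recommend adopting the $H(t,p)$ construction rather than the geometric centroid; with that substitution your outline matches the paper's proof.
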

	
	\begin{proof} 
		In the proof, we first prove \eqref{eq:params-dlt-close} and \eqref{eq:omg-1-Lamb-epsilon} for $i=1,\dots,N$ using an inductive argument, starting with the $i=1$ case. We then prove the shift estimate \eqref{eq:p1-B} at the end.
		
		\smallskip
		\noindent \textbf{Case $i = 1$}. In this case, using \eqref{eq:B1}, \eqref{eq:B2}, and \eqref{eq:B3} for $i = 1$, and the initial data assumption with \eqref{eq:Energy-difference-X}, we obtain a lower bound on $E_{1}(t)$ \begin{equation*}
			\begin{split}
				E_{1}(t) \ge E_{1}(0) -\dlt > C_{L} \bar\kpp_{1} \bar\mu_{1} - C\dlt 
			\end{split}
		\end{equation*} as well as $\kpp_{1}(t) \le \bar{\kpp}_{1} + C\dlt$, $\mu_{1}(t) \le \bar{\mu}_{1} + C\dlt$. In particular, for any small $\varep_{1}>0$, by taking $\dlt>0$ smaller if necessary and appealing to Proposition \ref{prop:Lamb-energy-estimate}, we obtain \begin{equation}\label{eq:omg-1-Lamb-epsilon-temp}
			\begin{split}
				\nrm{ \omg_{i}(t) - \omg_{i,Lamb}(t) }_{L^{2} \cap  L^1_*  } \le  {\varepsilon_{1}} ,\quad 	 \omg_{i,Lamb}(t,x):=  \omg_{Lamb}^{\bar\kpp_{i}, \bar\mu_{i}}(\cdot - \tau_{i}(t) \mathbf{e}_{1} )
			\end{split}
		\end{equation} for $i=1$ with \textit{some} $\tau_{1}(t)$. Furthermore, from the general upper bound $E_{1}(t) \le C_{L} \kpp_{1}(t)\mu_{1}(t)$, we get \begin{equation*}
			\begin{split}
				C_{L} \kpp_{1}(t)\mu_{1}(t) > C_{L} \bar\kpp_{1} \bar\mu_{1} - C\dlt ,
			\end{split}
		\end{equation*} which combined with \eqref{eq:B1}$_{1}$ and \eqref{eq:B2}$_{1}$ again gives lower bounds  \begin{equation}\label{eq:K-mu-1-LB}
			\begin{split}
				K_{1}(t) > \bar{K}_{1} - C\dlt, \qquad \mu_{1}(t) > \bar\mu_{1} - C\dlt. 
			\end{split}
		\end{equation}
		This leads to \eqref{eq:params-dlt-close} for $i=1$.
		
		\smallskip
		\noindent \textbf{Case $2 \le i \le N-1$}. For $2 \le i \le N-1$, we use the following inductive argument starting with $i=2$. Applying \eqref{eq:params-dlt-close} for all $j=1,\dots,i-1$, we have \begin{equation}\label{eq:K-mu-j-LB}
			\begin{split}
				K_{j}(t) > \bar{K}_{j} - C\dlt, \qquad \mu_{j}(t) > \bar\mu_{j} - C\dlt \quad\text{ for all }j=1,\dots,i-1.
			\end{split}
		\end{equation} Combining \eqref{eq:K-mu-j-LB} with \eqref{eq:B1}$_{i}$ and \eqref{eq:B2}$_{i}$, we obtain \begin{equation}\label{eq:K-mu-UB}
			\begin{split}
				K_{i}(t) \le \bar{K}_{i} + C\dlt, \qquad \mu_{i}(t) \le \bar{\mu}_{i} + C\dlt. 
			\end{split}
		\end{equation} Together with this upper bound, using \eqref{eq:B3}$_{i}$ \begin{equation*}
			\begin{split}
				E_{i}(t) \ge E_{i}(0) - \dlt > C_{L} \bar{\kpp}_{i} \bar{\mu}_{i} - C\dlt 
			\end{split}
		\end{equation*} gives \eqref{eq:omg-1-Lamb-epsilon-temp} for $i$ with some $\tau_{i}(t)$. As in the case $i = 1$ above, with  \begin{equation*}
			\begin{split}
				C_{L}\bar\kpp_{i}\bar\mu_{i} - C\dlt < E_{i}(t) \le C_{L}\kpp_{i}(t) \mu_{i}(t), 
			\end{split}
		\end{equation*} we deduce the lower bounds \begin{equation*}\label{eq:muK1-LB}
			\begin{split}
				K_{i}(t) > \bar{K}_{i} - C\dlt, \qquad \mu_{i}(t) > \bar\mu_{i} - C\dlt ,
			\end{split}
		\end{equation*}
		leading to \eqref{eq:params-dlt-close}.
		
		\smallskip
		\noindent \textbf{Case $i = N$}. Again, in this case we may assume that \eqref{eq:K-mu-j-LB} is given for all $j < N$. Now using the conservation laws \begin{equation*}
			\begin{split}
				\sum_{i=1}^{N} \mu_{i}(t) = \sum_{i=1}^{N} \bar{\mu}_{i} +O(\dlt), \qquad 	\sum_{i=1}^{N} K_{i}(t) = \sum_{i=1}^{N} \bar{K}_{i} +O(\dlt)
			\end{split}
		\end{equation*} gives the upper bound \eqref{eq:K-mu-UB} for $N$. Proceeding similarly as above and using \eqref{eq:B3}$_{N}$, we obtain \eqref{eq:omg-1-Lamb-epsilon-temp} for $i = N$.
		
		\smallskip

		\noindent \textbf{Shift estimate}. 
		At this point, by \eqref{eq:omg-1-Lamb-epsilon-temp}, we know that for all $t \in [0,T]$, $\omg_{i}(t,\cdot)$ is ${\varep_{1}}$-close to the Lamb dipole $\omg_{Lamb}^{\bar\kpp_i, \bar\mu_i}$ after some shift $\tau_{i}(t)$, for each $1 \le i \le N$. Under this condition, we want to define $p_1(t)$ that satisfies \eqref{eq:omg-1-Lamb-epsilon} and \eqref{eq:p1-B}. We will provide this argument just for the case $i = 1$, the other cases being identical. 
		
		\medskip
		\noindent\underline{Step 1. Estimation on the oscillation of $\tau_1$.} Unfortunately, $\tau_{1}(t)$ defined by the property \eqref{eq:omg-1-Lamb-epsilon-temp} may not be unique, and is not necessarily continuous in $t$. Still, we will show that it cannot jump too much within any time interval of length $T_{1} := \min\{ T, \frac{R_{1}}{10 N \bar{V}_{1}} \}$, where $R_{1} := R(\bar\kpp_1, \bar\mu_1)$ is the support radius of the corresponding Lamb dipole defined in \eqref{eq:lamb-rescale-def}. Namely, for all $0\leq t < t' \leq T$, we will show
		\begin{equation}\label{eq:tau-jump}
			\begin{split}
				|\tau_{1}(t) - \tau_{1}(t')| \le R_{1}  \qquad \text{ if } 0<t'-t<T_1.
			\end{split}
		\end{equation} 
		To see this, we first note the global-in-time pointwise bound for the velocity $\nrm{u}_{L^{\infty}} \le 5N\bar{V}_{1}$, which holds even when one ``stacks up'' all the Lamb dipoles at a single point. (The maximum velocity of a Lamb dipole with traveling speed $\bar{V}_{1}$ is bounded by $4\bar{V}_{1}$.) For any $t' \in [t, t+T_1]$, assume towards a contradiction that $\tau_1(t') > \tau_{1}(t) + R_{1}$. Due to the global velocity bound, the fluid particles starting from $x_{1} \le \tau_{1}(t)$ at time $t$ (which contains almost half of the $L^{2}$ norm from $\omg_{1}(t,\cdot)$) lies to the left of $x_{1} = \tau_{1}(t) + 5N\bar{V}_{1} (t'-t) \le \tau_{1}(t) + R_{1}/2$ at time $t'$. Therefore, if $\tau_{1}(t') > \tau_{1}(t) + R_{1}$, the shifted Lamb dipole $\omg_{Lamb}^{\bar\kpp_{1}, \bar\mu_{1}}(\cdot - \tau_{1}(t') \mathbf{e}_{1} )$ is clearly going to miss all of them, which contradicts the defining property of $\tau(t')$ in \eqref{eq:omg-1-Lamb-epsilon-temp}. We can use a similar argument to rule out the possibility of $\tau_1(t') < \tau_{1}(t) - R_{1}$.
		
		\medskip
		\noindent
		\underline{Step 2. Definition of the displacement $p_1(t)$.} In the following, we will define $p_1(t)$ in an implicit way, by slightly 
		adjusting the argument in \cite[\S 3.2]{JYZ}.
		Let $g \in C^\infty(\mathbb{R})$ be a smooth test function that is odd, monotone increasing on $[-4R_{1},4R_{1}]$, and satisfies 
		\[g(x_{1}) = \begin{cases}
			x_1 & \text{ for } |x_{1}| \le 3R_{1}, \\
			4R_{1} & \text{ for } 4R_{1} \le x_{1} \le D_{0}/5, \\
			0 & \text{ for }x_{1} \ge D_{0}/4. 
		\end{cases}
		\]
		We then introduce the function $H(t,p)$ for $t\ge 0$ and $p \in \bbR$ as \begin{equation}\label{eq:def-H}
			\begin{split}
				H(t,p) := 	\int_{\bbR^2_+} \omg(t, x + p \, \bfe_1) g(x_1) dx ,
			\end{split}
		\end{equation} 
		which is differentiable in $t$ and $p$ since $g$ is smooth. 
		
		Using such function $H$, we will define $p_1(t)$ and show its properties through an induction argument, each time advancing by time $T_1$ until we hit time $T$. Namely, assuming at some $t_0 \in [0,T]$ we have 
		\begin{equation}\label{induction}
			\tau_1(t_0)-L_1(t_0)>\frac{D_0}{3},
		\end{equation} we will show the following holds for all $t\in [t_0,\min\{t_0+T_1,T\}]$:
		
		(a) There exists a unique $p_{1}(t) \in (\tau_1(t)-2R_1, \tau_1(t) + 2R_1)$ that satisfies \begin{equation}
			\begin{split}
				H(t,p_{1}(t)) = \int_{\bbR^2_+} \omg(t, x + p_{1}(t) \bfe_{1} ) g(x_{1}) dx = 0.\label{def_p}
			\end{split}
		\end{equation} 
		Furthermore, such $p_1(t)$ satisfies 
		\begin{equation}
			\label{temp_p_diff}
			|p_1(t)-\tau_1(t)|\leq C\varep_1,
		\end{equation}
		and 
		\begin{equation}\label{temp_goal_p}
			\nrm{ \omg_{1}(t) - \omg_{Lamb}^{\bar\kpp_{1}, \bar\mu_{1}}(\cdot - p_{1}(t) \mathbf{e}_{1} ) }_{L^{2} \cap  L^1_*  } \le C\varepsilon_{1}.
		\end{equation}
		
		(b) The function $p_1(t)$ defined above is differentiable in $t$ for $t\in[t_0, \min\{t_0+T_1, T\}]$, and satisfies
		\begin{equation}\label{eq:p1-B-goal}
			\begin{split}
				\Big|\frac{d}{dt}p_{1}(t)  - \bar{V}_{1} \Big| \le C\varep_{1}^{1/2}.
				%|p_{1}(t)- \bar{p}_{1}  - \bar{V}_{1}\, t | \le C\varep_{1}^{1/2}(1+t)  \quad \mbox{and}\quad  	\nrm{ \omg_{1}(t) - \omg_{Lamb}^{\bar\kpp_{1}, \bar\mu_{1}}(\cdot - p_{1}(t) \mathbf{e}_{1} ) }_{L^{2} \cap  L^1_*  } \le C\varepsilon_{1}.
			\end{split}
		\end{equation} 
		
		\medskip
		Note that \eqref{induction} holds at $t_0=0$, since $|\tau_1(0)-\bar p_1| < R_1$  (where $\bar{p}_1$ is from the assumption of Theorem \ref{thm:N-Lamb-dipoles}), and $\bar p_1 - L_1(0)>\frac{D_0}{2}$ by definition of $L_1(0)$. Assuming \eqref{induction} holds at some $t_0\in[0,T]$, once we prove that (b) holds, it implies \[
		\frac{d}{dt} (p_1(t)-L_1(t)) \geq \frac{\bar V_1-\bar V_2}{2} - C\varep_1^{1/2} \quad \text{ for all }t\in [t_0,t_0+\min\{t_0+T_1,T\}].
		\]
		If $t_0+T_1 < T$, we integrate the above in $[t_0,t_0+T_1]$ and combine it with \eqref{temp_p_diff} to obtain
		\[
		\begin{split}
			\tau_1(t_0+T_1)-L_1(t_0+T_1) &\geq p_1(t_0+T_1)-L_1(t_0+T_1) - C\varep_1\\
			& \geq p_1(t_0)-L_1(t_0) + \frac{\bar V_1 - \bar V_2}{2} T_1 - C(\varep_1 + \varep_1^{1/2} T_1) \\
			&\geq  \tau_1(t_0)-L_1(t_0) + \frac{\bar V_1 - \bar V_2}{2} T_1 - C(2\varep_1 + \varep_1^{1/2} T_1) \\
			&\geq \tau_1(t_0)-L_1(t_0)\geq \frac{D_0}{3},
		\end{split}
		\]
		thus \eqref{induction} also holds at $t_0+T_1$. We can then do induction to conclude that (a) and (b) holds for all $t\in [0,T]$.
		
		\medskip
		\noindent\underline{Step 3. Existence and uniqueness of $p_1(t)$.} Let us first justify the induction step (a). For any $t\in [t_0, \min\{t_0+T_1, T\}]$, combining \eqref{induction} with \eqref{eq:tau-jump}, we know $\tau_1(t) - L_1(t) > D_0/4$. 
		Using the decomposition $\tld{\omg}_{i}(t) := \omg_{i}(t) - \omg_{i,Lamb}(t)$ and \eqref{eq:omg-1-Lamb-epsilon-temp}, we have \begin{equation*}
			\begin{split}
				H(t,\tau_1(t))& =\int_{\bbR^2_+} \omg_{1}(t, x + \tau_{1}(t) \bfe_1) g(x_1) dx  + \int_{\bbR^2_+} \omg_{\geq 2}(t, x + \tau_{1}(t) \bfe_1) g(x_1) dx \\
				&= \int_{\bbR^2_+} \omg_{1}(t, x + \tau_{1}(t) \bfe_1) g(x_1) dx \\
				& =  \int_{\bbR^2_+} \omg_{Lamb}^{\bar\kpp_1,\bar\mu_1}(x) g(x_1) dx + \int_{\bbR^2_+} \tld\omg_{1}(t, x + \tau_{1}(t) \bfe_1) g(x_1) dx.
			\end{split}
		\end{equation*}
		Here the second equality follows from the fact that $\omega_{\geq 2}(t,\cdot+\tau_1(t)\mathbf{e}_1)$ are supported on $\{x_1<L_1(t)-\tau_1(t)\}$, whereas $g$ is supported on $\{x_1>-D_0/4\}$; these supports are disjoint since $\tau_1(t) - L_1(t) > D_0/4$. For the two integrals on the right hand side, note that the first integral is zero by odd symmetry of $g$ in $x_{1}$, and the second integral is bounded by $C\varep_{1}$ using $\nrm{\tld{\omg}_{1}}_{L^2} \lesssim \varep_{1}$, with $C>0$ depending on the size of the support of $\tld{\omg}_1$. The above discussion yields \begin{equation}\label{eq:H-bound-tau}
			\begin{split}
				\left| H(t,\tau_{1}(t)) \right| \le C\varep_{1}. 
			\end{split}
		\end{equation} Now we compute, for points $(t,p)$ satisfying $|p - \tau_{1}(t)| \le 2 R_{1}$,
		\begin{equation*}
			\begin{split}
				(\rd_{p}H)(t,p) &= - \int_{ \bbR_+^2 } \omg_{1}(t, x+p\mathbf{e}_1) g'(x_{1}) dx \\
				&= 
				- \int_{ \bbR_+^2 }  \omega_{Lamb}^{\bar \kappa_1,\bar \mu_1}(x+(p-\tau_1(t))\mathbf{e}_1) dx - \int_{ \bbR_+^2 } \tld{\omega}_1(t,x+p\mathbf{e}_1) g'(x_1) dx\\
				&= - \underbrace{ \int_{ \bbR_+^2 }  \omega_{Lamb}^{\bar \kappa_1,\bar \mu_1}(x) dx}_{=: m_1} - \int_{ \bbR_+^2 } \tld{\omega}_1(t,x+p\mathbf{e}_1) g'(x_1) dx.
			\end{split}
		\end{equation*} Here the first equality is again due to fact that the contribution from $\omega_{\geq 2}$ is zero, and the second equality follows from the fact that $g'=1$ on $|x_1|<3R_1$ and $|p-\tau_1(t)|\leq 2R_1$. Note that the last integral involving $\tld{\omg}_{1}$ can again be bounded by $C \varep_{1}$ , using $\nrm{\tld{\omg}_{1}}_{L^2} \lesssim \varep_{1}$, with $C>0$ depending on the size of the support of $\tld{\omg}_1$. Therefore we arrive at
		\begin{equation}\label{eq:rd-p-H-bound}
			|\partial_p H(t,p)+m_1|\leq C\varep_1 \quad\text{ for all }\quad|p-\tau_1(t)|\leq 2R_1.
		\end{equation}
		In particular, by choosing $\varep_1$ sufficiently small, \eqref{eq:rd-p-H-bound} implies 
		\begin{equation}
			\label{dHdp2}    \partial_p H(t,p)\leq -\frac{1}{2}m_1 \quad  \text{ for all } \quad |p-\tau_1(t)|\leq 2R_1.
		\end{equation}

		Combining \eqref{eq:H-bound-tau} and \eqref{dHdp2}, for any $t\in [t_0, \min\{t_0+T_1, T\}]$, there exists a unique $p_{1}(t)$ in $(\tau_1(t)-2R_1, \tau_1(t)+2R_1)$ satisfying \eqref{def_p}, and it also satisfies \eqref{temp_p_diff}.
		In particular, at $t=0$, this gives $|p(0)-\bar p_0|\leq C\varep$ since $\tau_1(0)$ can be chosen as $\bar p$.
		
		Putting \eqref{temp_p_diff} together with the triangle inequality and Lipschitz continuity of $\omg_{Lamb}^{\bar\kpp_{1}, \bar\mu_{1}}$, we obtain \eqref{temp_goal_p}:
		\begin{equation*}
			\begin{split}
				\nrm{ \omg_{1}(t) - \omg_{Lamb}^{\bar\kpp_{1}, \bar\mu_{1}}(\cdot - p_{1}(t) \mathbf{e}_{1} ) }_{L^{2} \cap  L^1_*  } &\le \nrm{ \omg_{1}(t) - \omg_{Lamb}^{\bar\kpp_{1}, \bar\mu_{1}}(\cdot - \tau_{1}(t) \mathbf{e}_{1} ) }_{L^{2} \cap  L^1_*  } \\
				&\qquad + \nrm{ \omg_{Lamb}^{\bar\kpp_{1}, \bar\mu_{1}}(\cdot)  - \omg_{Lamb}^{\bar\kpp_{1}, \bar\mu_{1}}(\cdot - (\tau_{1}(t) - p_{1}(t)) \mathbf{e}_{1} ) }_{L^{2} \cap  L^1_*  }  \\
				&\le C\varep_{1} + C|\tau_{1}(t) - p_{1}(t)| \le 2C\varep_{1}. 
			\end{split}
		\end{equation*} 
		
		\medskip
		\noindent\underline{Step 4. Estimating derivative of $p_1(t)$.} To justify the induction step (b), we proceed similarly as in \cite[\S 3.2]{JYZ}, which we briefly sketch below. (The only difference is that \cite[\S 3.2]{JYZ} considers the equation in the moving frame with velocity $\bar V_1$, whereas we consider it in the original frame.)
		
		Recall that $p_1(t)$ is defined to satisfy $H(t,p_1(t))=0$. Since $H(t,p)$ is differentiable in $t,p$, and $\partial_p H(t,p_1(t))$ does not vanish by \eqref{dHdp2}, we have $p_1(t)$ is differentiable  for $t\in [t_0, \min\{T_1,T\}]$, with its derivative given by
		\begin{equation}\label{dpdt}
			\frac{d}{dt}p_1(t) = -\frac{\partial_t H(t,p_1(t))}{\partial_p H(t,p_1(t))}.
		\end{equation}
		Note that we already have the estimate \eqref{eq:rd-p-H-bound} for the denominator. For the numerator, we have
		\[
		\partial_t H(t,p_1(t)) = \int_{ \bbR_+^2 } u_1(t, x+p_1(t)\mathbf{e}_1) \,\omg_1(t, x+p_1(t)\mathbf{e}_1) \,g'(x_1) dx.
		\]
		In \eqref{temp_goal_p}, we have shown $\omg_1(t, x+p_1(t)\mathbf{e}_1)$ is close to $\omg_{Lamb}^{\bar \kappa_1,\bar \mu_1}$. In the above integral, if we approximate $\omg_1(t, x+p_1(t)\mathbf{e}_1)$ and $u_1(t, x+p_1(t)\mathbf{e}_1)$ by $\omg_{Lamb}^{\bar \kappa_1,\bar \mu_1}$ and $(u_{Lamb}^{\bar \kappa_1,\bar \mu_1})_1$ respectively, the integral would become
		\[
		\begin{split}
			I_{approx}:=\int_{ \bbR_+^2 } (u_{Lamb}^{\bar \kappa_1,\bar \mu_1})_1(x) \,\omg_{Lamb}^{\bar \kappa_1,\bar \mu_1}(x)\, g'(x_1) dx 
			= \bar V_1\int_{ \bbR_+^2 }  \omg_{Lamb}^{\bar \kappa_1,\bar \mu_1}(x) g'(x_1) dx
			= \bar V_1 m_1.
		\end{split}
		\]
		Here the first equality is due to the fact that $(u_{Lamb}^{\bar\kappa_1, \bar\mu_1} - \bar V_1 \mathbf{e}_1)\cdot\nabla \omg_{Lamb}^{\bar\kappa_1, \bar\mu_1}=0$ (recall $\omg_{Lamb}^{\bar\kappa_1, \bar\mu_1}$ is a traveling wave solution with velocity $\bar V_1 \mathbf{e}_1$); testing this equation by $g(x_1)$ and applying the divergence theorem gives the first equality. The second inequality follows from the definition $m_1:=\int_{ \bbR_+^2 } \omg_{Lamb}^{\bar \kappa_1,\bar \mu_1} dx$, and the fact that $g'=1$ in $|x_1|<3R_1$. 
		
		It remains to estimate the difference $\partial_t H(t,p_1(t)) - I_{approx}$. By \eqref{temp_goal_p}, $\|\omg_1(t, x+p_1(t)\mathbf{e}_1)-\omg_{Lamb}^{\bar \kappa_1,\bar \mu_1}\|_{L^2}\leq C\varep_1$. Using the Biot--Savart law gives $\|u(t, x+p_1(t)\mathbf{e}_1)-u_{Lamb}^{\bar \kappa_1,\bar \mu_1}\|_{L^\infty}\leq C\varep_1^{1/2}$. These estimates (together with the fact that $\omega_1$ has compact support) gives
		\[
		|\partial_t H(t,p_1(t)) - \bar V_1 m_1| = |\partial_t H(t,p_1(t)) - I_{approx}|\leq C \varep_1^{1/2}.
		\]
		Plugging this and \eqref{eq:rd-p-H-bound} into \eqref{dpdt} gives \eqref{eq:p1-B-goal}.
		
		\medskip 
		
		Finally, given $\varep$ sufficiently small, we may take $\varep_{1} = \frac{\varep}{10CN}$. This finishes the proof. 
	\end{proof}

	\subsection{Velocity on the borders} 
	As a consequence of \eqref{eq:p1-B}, we obtain smallness of the velocity everywhere between the two ``borders'' $L_{i}^{\pm}(t)$.   In particular, since such estimate also holds for $x_1=L_i(t)$, the smallness of velocity (as compared to $\dot L_i$) implies that it is only possible for a fluid particle to cross a ``border'' $L_i(t)$ from the right to the left, but never the other way. 
	\begin{lemma}\label{lem:u-border}
		If the bootstrap assumptions \eqref{eq:B1}--\eqref{eq:B3} hold for $[0,T]$, then for all $t\in [0,T]$ and all $1 \le i \le N-1$, we have \begin{equation}\label{eq:u-border}
			\begin{split}
				\sup_{ \{x_1\in [L_i^-(t), L_i^+(t)], x_{2} \ge 0\} }|u(t,x)| \le \varep^{1/2}\dot{L}_{i}.
			\end{split}
		\end{equation}  
	\end{lemma}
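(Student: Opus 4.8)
The plan is to estimate $u(t,x)=\sum_{j=1}^{N}u_{j}(t,x)$ at a point $x=(x_{1},x_{2})$ with $x_{1}\in[L_{i}^{-}(t),L_{i}^{+}(t)]$ and $x_{2}\ge0$, by treating separately the ``far'' pieces $j\notin\{i,i+1\}$ and the two ``adjacent'' pieces $j\in\{i,i+1\}$. Write $c_{V}:=\min_{1\le j\le N-1}(\bar V_{j}-\bar V_{j+1})>0$, and recall $\dot L_{i}=\tfrac12(\bar V_{i}+\bar V_{i+1})$ is an order-one constant (bounded below by $\bar V_{N}>0$) depending only on $\boldsymbol{\bar\kpp}$. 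For a far piece, $\supp\omg_{j}(t)\subset\{L_{j}(t)\le x_{1}<L_{j-1}(t)\}$; an elementary computation from \eqref{eq:L-def}, \eqref{eq:Lpm-def}, the hypothesis $\bar p_{j}>\bar p_{j+1}+D_{0}$, and the ordering $\bar V_{1}>\cdots>\bar V_{N}$ shows this set is separated from the strip $\{x_{1}\in[L_{i}^{-}(t),L_{i}^{+}(t)]\}$ by a distance $\gtrsim D_{0}+c_{V}t$. Combined with $\mu_{j}(t)\le\bar\mu_{j}+C\dlt\le C$ from \eqref{eq:params-dlt-close} and the decay bound \eqref{eq:u-decay}, this gives $\sum_{j\notin\{i,i+1\}}|u_{j}(t,x)|\le C(D_{0}+c_{V}t)^{-2}$.

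For the adjacent pieces $j\in\{i,i+1\}$ I would use Lemma~\ref{lem:approx-Lamb-dipoles} to split $\omg_{j}(t)=\bar\omg_{j,Lamb}(t)+\tld{\omg}_{j}(t)$, where $\bar\omg_{j,Lamb}(t)=\omg_{Lamb}^{\bar\kpp_{j},\bar\mu_{j}}(\cdot-p_{j}(t)\mathbf{e}_{1})$, $p_{j}$ obeys the shift estimate \eqref{eq:p1-B}, and $\nrm{\tld{\omg}_{j}(t)}_{L^{2}\cap L^1_*}\le\varepsilon/N$. The dipole part $\bar\omg_{j,Lamb}(t)$ is supported in the disk of the fixed radius $R_{j}$ about $(p_{j}(t),0)$; using that the border $L_{i}(t)$ lies at distance $\ge\tfrac12(D_{0}+c_{V}t)$ from the expected center $\bar p_{j}+\bar V_{j}t$ of the $j$-th dipole, together with $|p_{j}(t)-\bar p_{j}-\bar V_{j}t|\le C\varepsilon^{1/2}(1+t)$, one checks that for $\varepsilon$ small and $D_{0}$ large (so as to absorb the fixed $R_{j}$ and the $\varepsilon^{1/2}(1+t)$ error) this disk too is separated from the strip by $\gtrsim D_{0}+c_{V}t$, so \eqref{eq:u-decay} bounds its contribution by $C(D_{0}+c_{V}t)^{-2}$.

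The only genuinely delicate term is the remainder $\tld{\omg}_{j}(t)$: its support may touch the strip, so the decay estimate does not apply, and its $L^{2}\cap L^1_*$-smallness alone does not control the singular Biot--Savart kernel at a nearby point. Here I would instead use the logarithmic velocity bound \eqref{eq:vel-L-infty-alpha} with a subcritical exponent, say $\alpha=\tfrac12$ (any $\alpha\in(0,\tfrac34)$ works): since $\nrm{\tld{\omg}_{j}(t)}_{L^{2}},\nrm{\tld{\omg}_{j}(t)}_{L^1_*}\le\varepsilon$ while $\nrm{\tld{\omg}_{j}(t)}_{L^{\infty}}\le\nrm{\omg(t)}_{L^{\infty}}+\nrm{\omg_{Lamb}^{\bar\kpp_{j},\bar\mu_{j}}}_{L^{\infty}}\le C$ (the $L^{\infty}$ norm of $\omg$ being conserved under \eqref{eq:2D-Euler}), \eqref{eq:vel-L-infty-alpha} gives $\nrm{u[\tld{\omg}_{j}(t)]}_{L^{\infty}}\le C(\varepsilon+\varepsilon^{2/3})\le C\varepsilon^{2/3}$, a power strictly better than the target $\varepsilon^{1/2}$.

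Collecting the three contributions gives, uniformly on the strip and with $C$ independent of $T$,
\[
|u(t,x)| \le \frac{C}{(D_{0}+c_{V}t)^{2}}+C\varepsilon^{2/3} \le \frac{C}{D_{0}^{2}}+C\varepsilon^{2/3}.
\]
To conclude \eqref{eq:u-border} I would first shrink $\varepsilon_{0}$ (depending only on $N,M,\boldsymbol{\bar\kpp},\boldsymbol{\bar\mu}$) so that $C\varepsilon^{2/3}\le\tfrac12\varepsilon^{1/2}\dot L_{i}$ for every $i$, and then enlarge $D_{0}$ (depending on $\varepsilon$) so that $CD_{0}^{-2}\le\tfrac12\varepsilon^{1/2}\dot L_{i}$. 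The main obstacle is exactly the remainder term for the adjacent dipoles: one must extract from it a power of $\varepsilon$ strictly above $\varepsilon^{1/2}$ using only $L^{2}\cap L^1_*$ control, which is what the subcritical choice of $\alpha$ in \eqref{eq:vel-L-infty-alpha} delivers; everything else is the velocity-decay bookkeeping of the first two steps. Finally, as noted before the lemma, the same estimate holds on $\{x_{1}=L_{i}(t)\}\subset\{x_{1}\in[L_{i}^{-}(t),L_{i}^{+}(t)]\}$ and $\varepsilon^{1/2}<1$, which forces fluid particles to cross $L_{i}(t)$ only from right to left.
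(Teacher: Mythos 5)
Your proposal is correct and follows essentially the same route as the paper: decompose the velocity into the contribution of the (well-separated) Lamb dipole profiles, bounded by the decay estimate \eqref{eq:u-decay} once the shift estimate \eqref{eq:p1-B} places their supports at distance $\gtrsim D_0$ from the strip, plus a remainder of size $\varepsilon$ in $L^2\cap L^1_*$ whose velocity is bounded by $C\varepsilon^{2/3}$ via \eqref{eq:vel-L-infty-alpha} with $\alpha=1/2$, and then choose $\varepsilon_0$ small and $D_0$ large in that order. Your extra step of treating the far pieces $j\notin\{i,i+1\}$ separately by support separation is a harmless reorganization of what the paper absorbs into the single split $u=u_{Lamb}+u_{rem}$.
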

	
	\begin{proof} 
		From Lemma \ref{lem:approx-Lamb-dipoles}, we have that \begin{equation}\label{eq:vort-approx}
			\begin{split}
				\Big\|\omg(t,\cdot) - \sum_{j=1}^{N} \bar{\omg}_{j,Lamb}(t,\cdot)\Big\|_{L^{2}\cap L^1_* } \le \varep. 
			\end{split}
		\end{equation} We decompose the velocity $u =: u_{Lamb} + u_{rem}$, where $u_{Lamb}$ denotes the velocity generated by $\sum_{j=1}^{N} \bar{\omg}_{j,Lamb}(t,\cdot)$; here $\bar{\omg}_{j,Lamb}(t,\cdot)$ is defined by \eqref{eq:omg-1-Lamb-epsilon}. When $x = (x_1,x_2)$ satisfies $x_{1} \in [L_{i}^-(t), L_i^+(t)]$ for some $i$, the support of $\sum_{j=1}^{N} \bar{\omg}_{j,Lamb}(t,\cdot)$ is separated from $x$ by at least $D_{0}/10$ by \eqref{eq:p1-B}. Applying \eqref{eq:u-decay}, \begin{equation*}
			\begin{split}
				|u_{Lamb}(x)| \le \frac{C}{D_{0}^{2}}.
			\end{split}
		\end{equation*} Next, applying \eqref{eq:vel-L-infty-alpha} with $\alp=1/2$ and \eqref{eq:vort-approx}, we obtain \begin{equation*}
			\begin{split}
				|u_{rem}(x)| \le C(M^{1/3}+\varep^{1/3})\varep^{2/3} \le C\varep^{2/3} 
			\end{split}
		\end{equation*} where we let $C$ also depend on $M$. Taking $\varep_{0}$ smaller, if necessary, relative to all $\dot L_i$ ($1 \le i \le N-1$) and $M$ and then taking $D_{0}$ larger in a way depending on $\varep_{0}$, we obtain  that \eqref{eq:u-border} holds for all $x_{1} \in [L_{i}^-(t), L_i^+(t)]$.
	\end{proof}

	\subsection{Bounds on the gain} \label{subsec:gain-bounds}
	
	\begin{lemma}\label{lem:gain-bounds}
		If the bootstrap assumptions \eqref{eq:B1}--\eqref{eq:B3} hold for $[0,T]$, then for all $t\in [0,T]$ and all $2 \le i \le N$, we have the following bounds for $\omg_{\ge i, gain}$: 
		\begin{equation}\label{eq:gain-bounds}
			\begin{split}
				{\nrm{\omg_{\ge i,gain}(t,\cdot)}_{L^{2}} +  \nrm{\omg_{\ge i,gain}(t,\cdot)}_{L^{1}} } \le C  \dlt^{1/2}.
			\end{split}
		\end{equation} 
	\end{lemma}
	\begin{proof} 
		Recall that in {Lemma \ref{lem:u-border}}, we have shown that fluid particles can only cross a ``border'' $L_i(t)$ from right to left, but not the other way. Since $\omega$ is conserved along the particle trajectory, $\omega_{\geq i, gain}(t,\cdot)$ is purely contributed by the fluid particles who have crossed $L_i$ from the right to the left before time $t$ (i.e. the fluid particles who left $\sum_{j<i}\omega_j$ by time $t$). Using this fact together with \eqref{eq:B1},
		we immediately have the following bound on $\nrm{\omg_{\ge i,gain}}_{L^{2}}^{2}$: \begin{equation*}
			\begin{split}
				\nrm{\omg_{\ge i,gain}(t,\cdot)}_{L^{2}}^{2} { \le  \sum_{j = 1}^{N-1} } \left( K_{j}(0) - K_{j}(t) \right)  \le C  \dlt . 
			\end{split}
		\end{equation*} 
		The $L^1$ bound then follows from the Cauchy--Schwarz inequality and the fact that $\omega_0$ has a compact support, whose measure is conserved in time:
		\[
		\int_{\mathbb{R}^2_+} \omg_{\ge i,gain}(t,x) dx \leq \nrm{\omg_{\ge i,gain}(t,\cdot)}_{L^{2}} |\supp\omega_0|^{1/2} \leq C\delta^{1/2}. \qedhere 
		\]
	\end{proof}

	\subsection{Bounds on the errors} We recall from \eqref{eq:omg-err-def}--\eqref{eq:omg-cen-def} that we have defined \begin{equation*}
		\begin{split}
			\omg_{i,err}^{(\ell)} = \omg_{i}\,\mathbf{1}_{ \{  L_{i}(t) \le x_{1} < L_{i}^{+}(t) \} }, \,\, \omg_{i,cen} = \omg_{i}\,\mathbf{1}_{ \{  L_{i}^{+}(t) \le x_{1} < L_{i-1}^{-}(t) \}}, \,\, \omg_{i,err}^{(r)} = \omg_{i}\,\mathbf{1}_{ \{  L_{i-1}^{-}(t) \le x_{1} < L_{i-1}(t) \} } 
		\end{split}
	\end{equation*}  for all $1 \le i \le N$. We have $\omg^{(r)}_{1,err} = 0 = \omg^{(\ell)}_{N,err}$. We denote the corresponding stream functions by $\psi_{i,err}^{(\ell)}, \psi_{i,cen}$, and $\psi_{i,err}^{(r)}$. 
	
	\begin{lemma}[$\dlt$-smallness of errors]\label{lem:omg-err-small}
		If the bootstrap assumptions \eqref{eq:B1}--\eqref{eq:B3} hold for $[0,T]$, then for all $t\in [0,T]$, we have \begin{equation}\label{eq:omg-err-small}
			\begin{split}
				\nrm{ \omg_{i,err}^{(\ell)} }_{L^1_* } + \nrm{\omg_{i,err}^{(\ell)} }_{L^{2}}^{2} + \nrm{ \omg_{i,err}^{(r)} }_{L^1_* } + \nrm{\omg_{i,err}^{(r)} }_{L^{2}}^{2} \le C\dlt. 
			\end{split}
		\end{equation}
	\end{lemma}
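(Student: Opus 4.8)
The plan is to isolate a single scalar inequality by feeding the \emph{sharp} energy inequality \eqref{eq:energy-bound-sharp} together with the additivity of enstrophy and impulse into the estimate $E_i(t)\ge C_L\bar\kpp_i\bar\mu_i-C\dlt$ from \eqref{eq:params-dlt-close}, and then extracting the bounds on the error by absorption. I only treat the left error $\omg_{i,err}^{(\ell)}$; the right error $\omg_{i,err}^{(r)}$ is handled identically with the roles of $L_i(t)$ and $L_{i-1}(t)$ (and of $\bar V_i>\bar V_{i+1}$ resp.\ $\bar V_{i-1}>\bar V_i$) interchanged, and the cases $\omg_{1,err}^{(r)}=\omg_{N,err}^{(\ell)}=0$ are vacuous. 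Write $a:=\nrm{\omg_{i,err}^{(\ell)}}_{L^2}^2$, $b:=\nrm{\omg_{i,err}^{(\ell)}}_{L^1_*}$, $\omg_i^+:=\omg_i-\omg_{i,err}^{(\ell)}=\omg_i\,\mathbf{1}_{\{x_1\ge L_i^+(t)\}}\ge 0$, and $\rho:=\omg_i(t)-\bar\omg_{i,Lamb}(t)$, so that by additivity $\nrm{\omg_i^+}_{L^2}^2=K_i(t)-a$ and $\nrm{\omg_i^+}_{L^1_*}=\mu_i(t)-b$.

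First I would use the shift estimate \eqref{eq:p1-B} and $\bar V_i>\bar V_{i+1}$ to check that, after taking $\varep_0$ small and $D_0$ large, the support of $\bar\omg_{i,Lamb}(t)$ --- a half-disk of radius $R_i$ around $p_i(t)\mathbf{e}_1$, with $|p_i(t)-\bar p_i-\bar V_i t|\le C\varep^{1/2}(1+t)$ --- stays in $\{x_1\ge L_i^+(t)+\tfrac15 D_0\}$ for all $t\ge 0$. Consequently $\bar\omg_{i,Lamb}(t)$ vanishes on $\{L_i(t)\le x_1<L_i^+(t)\}\supset\supp\omg_{i,err}^{(\ell)}$, so $\omg_{i,err}^{(\ell)}=\rho\,\mathbf{1}_{\{L_i\le x_1<L_i^+\}}$, and \eqref{eq:omg-1-Lamb-epsilon} already gives $a\le\varep^2$ and $b\le\varep$ --- small, but not yet $\le C\dlt$, which is what we want.

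The crux is to show that $\omg_{i,err}^{(\ell)}$ lives where the stream function of $\omg_i$ is not merely bounded but genuinely small. Splitting $\psi_i=\bar\psi_{i,Lamb}+\psi_\rho$, the separation just established and the decay bound following \eqref{eq_alpha} give $|x_2^{-1}\bar\psi_{i,Lamb}|\le CD_0^{-2}$ on the error strip, while Remark~\ref{rmk:stream-Hardy-bound} together with \eqref{eq:vel-L-infty-alpha} for $\alpha=\tfrac12$ (using $\nrm{\rho}_{L^2\cap L^1_*}\le\varep$ and $\nrm{\rho}_{L^\infty}\le\nrm{\omg_i}_{L^\infty}+\nrm{\bar\omg_{i,Lamb}}_{L^\infty}\le C$) give $|x_2^{-1}\psi_\rho|\le\nrm{u_\rho}_{L^\infty}\le C\varep^{2/3}$ everywhere. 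Hence $\eta:=\nrm{x_2^{-1}\psi_i}_{L^\infty(\supp\omg_{i,err}^{(\ell)})}\le C(D_0^{-2}+\varep^{2/3})$, and $\eta$ can be forced below $\tfrac13 C_L\bar\kpp_i$ by first taking $\varep_0$ small and then $D_0$ large. I expect this to be the main obstacle: it is precisely here that the shift estimate (keeping the dipole far from the strip for \emph{all} time, which is where the ordering $\bar V_i>\bar V_{i+1}$ enters) and the $\varep$-closeness \eqref{eq:omg-1-Lamb-epsilon} are indispensable. A crude $L^2\cap L^1_*$ comparison with a single Lamb dipole only yields $\varep$-smallness of the error, never the required $\dlt$-smallness, and without $\eta$ small the coefficient of $b$ in the inequality below has the wrong sign.

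Finally, expanding the quadratic energy and bounding both cross terms by $E_{inter}[\omg_i,\omg_{i,err}^{(\ell)}]$ (monotonicity of $E_{inter}$ in each nonnegative argument) gives $E_i(t)\le E[\omg_i^+]+3E_{inter}[\omg_i,\omg_{i,err}^{(\ell)}]$. I would then apply \eqref{eq:energy-bound-sharp} to $\omg_i^+$, the bound $E_{inter}[\omg_i,\omg_{i,err}^{(\ell)}]\le\tfrac12\eta\,b$ from \eqref{eq:E-inter-unsymm} and the previous step, and the estimates $E_i(t)\ge C_L\bar\kpp_i\bar\mu_i-C\dlt$, $K_i(t)\le\bar\kpp_i^2+C\dlt$, $\mu_i(t)\le\bar\mu_i+C\dlt$ from \eqref{eq:params-dlt-close}; using $(K_i(t)-a)^{1/2}\le(\bar\kpp_i^2+C\dlt)^{1/2}\le\bar\kpp_i+C\dlt$ and $0\le\mu_i(t)-b\le\bar\mu_i+C\dlt$, everything collapses to
\[ C_L\bar\kpp_i\bar\mu_i-C\dlt\;\le\;C_L\bar\kpp_i\bar\mu_i+C\dlt-\big(C_L\bar\kpp_i-\tfrac32\eta\big)\,b , \]
so $b=\nrm{\omg_{i,err}^{(\ell)}}_{L^1_*}\le C\dlt$ once $\tfrac32\eta\le\tfrac12 C_L\bar\kpp_i$. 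Feeding $b\le C\dlt$ back into the same chain yields $(K_i(t)-a)^{1/2}\ge\bar\kpp_i-C\dlt$, hence $a=K_i(t)-(K_i(t)-a)\le C\dlt$. Running the identical argument for $\omg_{i,err}^{(r)}$ and summing over $i$ completes the proof.
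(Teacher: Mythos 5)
Your proposal is correct and follows essentially the same route as the paper: both arguments combine the lower bound $E_i(t)\ge C_L\bar\kpp_i\bar\mu_i-C\dlt$ from \eqref{eq:B3} with the sharp energy inequality \eqref{eq:energy-bound-sharp} applied to the part of $\omg_i$ away from the borders, control the cross term by showing $\psi_i/x_2$ is small on the error strip (using the shift estimate \eqref{eq:p1-B} to separate the Lamb dipole from the strip, plus the velocity bound on the $\varep$-small remainder), and then absorb the error impulse before feeding the result back to get the $L^2$ bound. The only differences are cosmetic: you treat the left and right errors separately rather than jointly, and you bound $x_2^{-1}\psi_i$ by a genuinely small quantity via Remark \ref{rmk:stream-Hardy-bound}, whereas the paper only needs it bounded by a fixed fraction of $C_L\bar\kpp_i$ (obtained from gradient bounds integrated up from the $x_1$-axis) -- either suffices for the absorption.
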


	\begin{proof}
		Let us denote $\omega_{i,err} := \omega_{i,err}^{(\ell)}+\omega_{i,err}^{(r)}$, and denote by $\psi_{i,err}$ its corresponding stream function. So \eqref{eq:omg-err-small} is equivalent with
		\[
		\nrm{ \omg_{i,err} }_{L^1_* } + \nrm{\omg_{i,err} }_{L^{2}}^{2}\le C\dlt. 
		\]
		For simplicity, we set $\omg_{i,rem}(t) := \omg_{i}(t) - \bar\omg_{i,Lamb}(t)$, which satisfies \eqref{eq:omg-1-Lamb-epsilon}. (Note that $\omg_{i,rem}$ contains the left and right errors $\omg_{i,err}$, as well as the error in the central part.)
		For any $x\in\supp \omega_{i,err}$, we have $x_1 \in (L_i(t), L_i^+(t))\cup (L_{i-1}^-, L_{i+1}(t))$, therefore
		\begin{equation*}
			\begin{split}
				|\nb \psi_{i}(t,x) | \le |\nb \psi_{i,rem}(t,x)| + |\nb\psi_{i,Lamb}(t,x)| < \frac{C_{L}}{10} \bar\kpp_{i} \quad\text{ for } x\in\supp \omega_{i,err}
			\end{split}
		\end{equation*} using that $\nrm{\omg_{i,rem}}_{L^2}<\varep<\varep_{0}$ by \eqref{eq:omg-1-Lamb-epsilon}, and that $\bar\omg_{i,Lamb}(t)$ is far from $L_{i}^{+}(t)$ and $L_{i-1}^{-}(t)$ by \eqref{eq:p1-B}. 
		
		Similarly, using $\|\omega_{i,err}\|_{L^2}\leq {\|\omega_{i,rem}\|_{L^2}} \leq \varep_0$, we also have
		$
		|\nb \psi_{i,err}(t,x) | <  \frac{C_{L}}{10} \bar\kpp_{i},
		$
		and subtracting the above two inequalities gives
		$
		|\nb \psi_{i,cen}(t,x) | <  \frac{C_{L}}{5} \bar\kpp_{i}$ for $x\in\supp \omega_{i,err}$. Since all these stream functions are zero on the $x_1$-axis, we have
		\begin{equation}
			\label{temp_psi}
			\psi_{i,err}(t,x) < \frac{C_{L}}{10} \bar\kpp_{i}x_2 \quad\text{ and }\psi_{i,cen}(t,x) < \frac{C_{L}}{5} \bar\kpp_{i}x_2 \quad \text{ for }x\in\supp \omega_{i,err}.
		\end{equation}
		
		We now decompose and estimate $E_{i}(t)$ as follows: writing $\mu_{i,cen} = \nrm{\omg_{i,cen}}_{L^1_* }, \kpp_{i,cen} = \nrm{\omg_{i,cen}}_{L^{2}}$, we use the above estimates to obtain \begin{equation}\label{eq:Ei-decomp}
			\begin{split}
				E_{i}(t) & = -\frac12\int \omg_{i,err}\left( \psi_{i,err}+ 2 {\psi}_{i,cen} \right) dx - \frac12\int {\omg}_{i,cen} {\psi}_{i,cen} dx  \\
				& \le   \frac1{4}C_{L}\bar\kpp_{i}\nrm{\omg_{i,err}(t)}_{L^1_* }  + C_{L} {\mu}_{i,cen}(t) {\kpp}_{i,cen}(t) \\
				& \le  \frac1{4}C_{L}\bar\kpp_{i}\nrm{\omg_{i,err}(t)}_{L^1_* }  + C_{L} (\bar\mu_{i} + C\dlt - \nrm{\omg_{i,err}(t)}_{L^1_* } ) (\bar{\kpp}_{i} + C\dlt) \\
				& \le C_{L} \bar\kpp_{i}\bar\mu_{i} - \frac34 C_{L}\bar{\kpp}_{i}  \nrm{\omg_{i,err}(t)}_{L^1_* } +   C\dlt. 
			\end{split}
		\end{equation}
		Now combining this with (B3)$_{i}$ gives \begin{equation*}
			\begin{split}
				C_{L} \bar\kpp_{i}\bar\mu_{i}  - C\dlt < E_{i}(0) - {C} \dlt < E_{i}(t) \le C_{L} \bar\kpp_{i}\bar\mu_{i} - \frac23 C_{L}\bar{\kpp}_{i}  \nrm{\omg^{(\ell)}_{i,err}(t)}_{L^1_* } +   C\dlt 
			\end{split}
		\end{equation*} This gives $\nrm{\omg_{i,err}(t)}_{L^1_* } \le C\dlt$. Plugging this back into the second inequality of \eqref{eq:Ei-decomp} and combining it with (B3)$_{i}$ again, we have
		\[
		C_{L} \bar\kpp_{i}\bar\mu_{i}  - C\dlt < E_i(t) \leq C_L \bar\mu_i \kpp_{i,cen}(t) + C\delta.
		\]
		This yields $\kpp_{i,cen}(t)\geq \bar\kpp_i - C\delta$, leading to
		$\nrm{\omg_{i,err}(t)}_{L^{2}}^{2} \le C\dlt$.   
	\end{proof}

	\subsection{Bounds on the interaction energy}\label{subsec:E-inter-small}
	
	\begin{lemma}\label{lem:E-inter-small}
		If the bootstrap assumptions \eqref{eq:B1}--\eqref{eq:B3} hold for $[0,T]$, then for all $t\in [0,T]$ we have  \begin{equation}\label{eq:E-inter-small}
			\begin{split}
				0 \le E[\omg] - \sum_{1 \le i \le N } E[\omg_{i}] \le C\dlt^{3/2} . 
			\end{split}
		\end{equation}
	\end{lemma}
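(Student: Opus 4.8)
The plan is to expand the total interaction energy into a finite sum of pairwise terms. Writing $\omg = \sum_{i=1}^{N} \omg_{i}$ and using the double-integral formula for the energy, one has
\[
E[\omg] - \sum_{i=1}^{N} E[\omg_{i}] = \sum_{1 \le i < j \le N} \iint_{\bbR^2_+ \times \bbR^2_+} \omg_{i}(x)\, \bfG(x,y)\, \omg_{j}(y)\, dx\, dy = 2 \sum_{1 \le i < j \le N} E_{inter}[\omg_{i}, \omg_{j}].
\]
Since $\bfG \ge 0$ on $\bbR^2_+ \times \bbR^2_+$ and each $\omg_{i} \ge 0$, the left-hand side is $\ge 0$, giving the lower bound. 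For the upper bound I would estimate each $E_{inter}[\omg_{i}, \omg_{j}]$ with $i<j$ separately, using geometric separation of supports wherever it is available and otherwise the norm-only bound \eqref{eq:Energy-interaction-X} together with the $\dlt$-smallness of the error pieces from Lemma \ref{lem:omg-err-small}. Throughout I would freely use $\mu_{k}(t) \le \bar\mu_{k} + C\dlt \le C$ and $\kpp_{k}(t) \le C$ from \eqref{eq:params-dlt-close}.

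\textbf{The separated contributions.} First, if $j \ge i+2$, then $\supp \omg_{i} \subset \{x_{1} \ge L_{i}(t)\}$ while $\supp \omg_{j} \subset \{x_{1} \le L_{j-1}(t) \le L_{i+1}(t)\}$, so the two supports are separated in $\bbR^2_+$ by at least $L_{i}(t) - L_{i+1}(t)$, which by \eqref{eq:L-def}, the hypothesis $\bar p_{k} > \bar p_{k+1} + D_{0}$, and $\bar V_{i} > \bar V_{i+2}$ equals $\tfrac12(\bar p_{i}-\bar p_{i+2}) + \tfrac12(\bar V_{i}-\bar V_{i+2})t > D_{0}$. Hence \eqref{eq:Energy-interaction-support-separation} gives $E_{inter}[\omg_{i}, \omg_{j}] \le C D_{0}^{-2}\mu_{i}(t)\mu_{j}(t) \le C D_{0}^{-2}$. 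Next, for a consecutive pair $j = i+1$, I would use the Eulerian decomposition \eqref{eq:omg-err-def}--\eqref{eq:omg-cen-def}, writing $\omg_{k,err} := \omg_{k,err}^{(\ell)} + \omg_{k,err}^{(r)}$ so that $\omg_{k} = \omg_{k,cen} + \omg_{k,err}$, and expand $E_{inter}[\omg_{i}, \omg_{i+1}]$ bilinearly into four pieces. By \eqref{eq:Lpm-def}, $\supp \omg_{i,cen} \subset \{x_{1} \ge L_{i}^{+}(t)\}$ is separated from all of $\supp \omg_{i+1} \subset \{x_{1} \le L_{i}(t)\}$ by $\ge \tfrac1{10}(D_{0} + (\bar V_{i} - \bar V_{i+1})t) \ge D_{0}/10$, and symmetrically $\supp \omg_{i+1,cen} \subset \{x_{1} \le L_{i}^{-}(t)\}$ is separated from all of $\supp \omg_{i} \subset \{x_{1} \ge L_{i}(t)\}$ by the same amount. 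Therefore each of the three bilinear pieces containing at least one ``central'' factor is bounded via \eqref{eq:Energy-interaction-support-separation}; using $\mu_{k}(t) \le C$ and $\nrm{\omg_{k,err}}_{L^1_*} \le C\dlt$ from Lemma \ref{lem:omg-err-small}, each of these three pieces is $\le C D_{0}^{-2}$.

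\textbf{The non-separated contribution and conclusion.} The only remaining piece is $E_{inter}[\omg_{i,err}, \omg_{i+1,err}]$, for which no support separation is available, since $\omg_{i,err}^{(\ell)}$ and $\omg_{i+1,err}^{(r)}$ sit on opposite sides of the border $L_{i}(t)$ and may touch. Here I would apply the separation-free bound \eqref{eq:Energy-interaction-X}, which controls $E_{inter}$ purely in terms of the $L^{2}$ and $L^1_*$ norms of the two factors, together with Lemma \ref{lem:omg-err-small}, which gives $\nrm{\omg_{k,err}}_{L^1_*} \le C\dlt$ and $\nrm{\omg_{k,err}}_{L^{2}} \le C\dlt^{1/2}$; this yields
\[
E_{inter}[\omg_{i,err}, \omg_{i+1,err}] \le C\big(\nrm{\omg_{i,err}}_{L^1_*}\nrm{\omg_{i,err}}_{L^{2}}\nrm{\omg_{i+1,err}}_{L^1_*}\nrm{\omg_{i+1,err}}_{L^{2}}\big)^{1/2} \le C\big(\dlt \cdot \dlt^{1/2} \cdot \dlt \cdot \dlt^{1/2}\big)^{1/2} = C\dlt^{3/2}.
\]
Summing over the at most $N^{2}$ pairs gives $E[\omg] - \sum_{i} E[\omg_{i}] \le C N^{2} D_{0}^{-2} + C N \dlt^{3/2}$. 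Since in the dependency chain $N, M, \boldsymbol{\bar\kpp}, \boldsymbol{\bar\mu} \to \varep_{0} \to \dlt \to D_{0}$ the constant $D_{0}$ is chosen last, we may require it large enough (depending only on $\dlt, N, M, \boldsymbol{\bar\kpp}, \boldsymbol{\bar\mu}$) that $C N^{2} D_{0}^{-2} \le \dlt^{3/2}$, and the asserted bound follows with $C = C(N, M, \boldsymbol{\bar\kpp}, \boldsymbol{\bar\mu})$. The one genuinely delicate point is the term $E_{inter}[\omg_{i,err}, \omg_{i+1,err}]$: because the two error regions abut, one cannot exploit support separation, and one must instead rely on the fact, encoded in Proposition \ref{prop:Energy-X}, that the interaction energy is dominated by $L^{2} \cap L^1_*$ norms alone, the $\dlt$-smallness of these norms for the error pieces (Lemma \ref{lem:omg-err-small}) then producing precisely the exponent $3/2$. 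Everything else is a bookkeeping of geometric separations forced by the definitions \eqref{eq:L-def} and \eqref{eq:Lpm-def}.
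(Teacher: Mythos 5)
Your proposal is correct and follows essentially the same route as the paper's proof: nonnegativity of $\bfG$ for the lower bound, support separation via \eqref{eq:Energy-interaction-support-separation} for all non-adjacent pairs and for the pieces of adjacent pairs involving a central (or far-side) factor, and the norm-only bound \eqref{eq:Energy-interaction-X} combined with Lemma \ref{lem:omg-err-small} for the abutting error--error interaction, which is exactly where the exponent $3/2$ arises. The only cosmetic difference is that the paper splits each adjacent pair into three terms using only $\omg_{i,err}^{(\ell)}$ and $\omg_{i+1,err}^{(r)}$ rather than your four-term central/error expansion, but the estimates are identical.
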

	
	\begin{proof}
		
		We take $D_{0} > 1/\dlt$ and decompose \begin{equation*}
			\begin{split}
				E[\omg] & = \sum_{1 \le i,j \le N} E_{inter}[\omg_{i},\omg_{j}]  = \sum_{1 \le i \le N } E[\omg_{i}] + 2\sum_{1 \le i \le N-1 }E_{inter}[\omg_{i},\omg_{i+1}] + \sum_{i,j : |i-j|>1} E_{inter}[\omg_{i},\omg_{j}]. 
			\end{split}
		\end{equation*} Each term on the right-hand side is nonnegative. Then, we may bound using \eqref{eq:Energy-interaction-support-separation} \begin{equation*}
			\begin{split}
				\sum_{i,j : |i-j|>1} E_{inter}[\omg_{i},\omg_{j}] \le \frac{C}{D_{0}^{2}} \le C\dlt^{2}. 
			\end{split}
		\end{equation*} Similarly, for $\sum_{1 \le i \le N-1 }E_{inter}[\omg_{i},\omg_{i+1}] $, we decompose $\omg_{i} = (\omg_{i} - \omg_{i,err}^{(\ell)}) + \omg_{i,err}^{(\ell)}$ and $\omg_{i+1} = (\omg_{i+1} - \omg_{i+1,err}^{(r)}) + \omg_{i+1,err}^{(r)}$ so that \begin{equation*}
			\begin{split}
				E_{inter}[\omg_{i},\omg_{i+1}] =  E_{inter}[\omg_{i} - \omg_{i,err}^{(\ell)},\omg_{i+1}] + E_{inter}[\omg_{i,err}^{(\ell)}, \omg_{i+1} - \omg_{i+1,err}^{(r)}] + E_{inter}[\omg_{i,err}^{(\ell)} ,\omg_{i+1,err}^{(r)} ]
			\end{split}
		\end{equation*} and bound again using \eqref{eq:Energy-interaction-support-separation} \begin{equation*}
			\begin{split}
				\sum_{1 \le i \le N-1} \left( E_{inter}[\omg_{i} - \omg_{i,err}^{(\ell)},\omg_{i+1}] + E_{inter}[\omg_{i,err}^{(\ell)}, \omg_{i+1} - \omg_{i+1,err}^{(r)}] \right) \le \frac{100C}{D_{0}^{2}} \le C\dlt^{2}. 
			\end{split}
		\end{equation*} Finally, we use \eqref{eq:Energy-interaction-X} to bound \begin{equation*}
			\begin{split}
				E_{inter}[\omg_{i,err}^{(\ell)} ,\omg_{i+1,err}^{(r)} ] \le C\dlt^{3/2},
			\end{split}
		\end{equation*} where we have applied \eqref{eq:omg-err-small} from Lemma \ref{lem:omg-err-small}. 
	\end{proof}  
	
	\section{Closing the Bootstrap Assumptions}\label{sec:boot}
	
	In this section, we keep working under the assumptions \eqref{eq:B1}--\eqref{eq:B3} on $[0,T]$ for some $T>0$ and prove that actually all the assumptions \eqref{eq:B1}--\eqref{eq:B3} hold on $[0,T]$ with $\dlt$ replaced by $\dlt/100$, using all of the lemmas established in \S \ref{sec:boot-con}. By continuity in time, this immediately concludes that \eqref{eq:B1}--\eqref{eq:B3} hold with $\dlt/100$ for all $t \in [0,\infty)$. In particular, Lemma \ref{lem:approx-Lamb-dipoles} is applicable for all nonnegative times, which establishes Theorem \ref{thm:N-Lamb-dipoles}. 
	
	\subsection{Enstrophy, impulse, and energy change}\label{subsec:formulae}
	
	Given a time-dependent function $g(t,x)$  that is transported by the velocity field $u$ on {$\mathbb{R}^2_+$}, it is convenient to introduce the notation $\mathbf{F}_{i}[g](t)$ to denote the flux of $g$ across the moving line $\{x_1=L_i(t)\}$ (with the leftward direction defined as positive flux): \begin{equation*}
		\begin{split}
			\mathbf{F}_{i}[g](t) := \int_{ \{ x_{1} = L_{i}(t), x_2\geq 0 \} } \left( \dot{L}_{i} - u_{1}(t,x) \right) g(t,x) dx_{2}. 
		\end{split}
	\end{equation*} This expression is interpreted as $0$ for $i = 0$ and $N$; $L_{0}=+\infty$ and $L_{N}=-\infty$ were introduced just for convenience. 
	
	\begin{proposition}
		We have the following formulae for the enstrophy, impulse, and energy change: \begin{equation}\label{eq:Ki-dt}
			\begin{split}
				\dot{K}_{i}(t) & = \mathbf{F}_{i-1}[\omg^{2}](t) - \mathbf{F}_{i}[\omg^{2}](t), 
			\end{split}
		\end{equation}\begin{equation}\label{eq:Ki-dt-sum}
			\begin{split}
				\dot{K}_{\le j}(t) & = - \mathbf{F}_{j}[\omg^{2}](t), 
			\end{split}
		\end{equation} \begin{equation}\label{eq:mui-dt}
			\begin{split}
				\dot{\mu}_{i}(t) & = \sum_{j \ne i} \int  u_{j,2}(t,x)\omg_{i}(t,x) dx + \mathbf{F}_{i-1}[x_{2}\omg](t) - \mathbf{F}_{i}[x_{2}\omg](t),
			\end{split}
		\end{equation} \begin{equation}\label{eq:mui-dt-sum}
			\begin{split}
				\dot{\mu}_{\le j}(t) & =\int   (u_{\ge j+1})_{2}(t,x)   \,  \omg_{\le j}(t,x)  dx - \mathbf{F}_{j}[x_{2}\omg](t),
			\end{split}
		\end{equation}  and \begin{equation}\label{eq:Ei-dt} 
			\begin{split}
				\dot{E}_{i}(t) = \sum_{j \ne i}  \int  -\nb\psi_{i}(t,x) \cdot (u_{j}\omg_{i})(t,x) \,  dx + \mathbf{F}_{i-1}[-\psi_{i}\omg](t) - \mathbf{F}_{i}[-\psi_{i}\omg](t). 
			\end{split}
		\end{equation}  
	\end{proposition}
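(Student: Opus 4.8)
The plan is to obtain all five identities by differentiating under the integral sign, using only the transport equation $\rd_t\omg + u\cdot\nb\omg = 0$, incompressibility $\nb\cdot u = 0$, and the distributional time-derivative of the characteristic function of the moving strip. Write $\chi_i(t,\cdot) := \mathbf{1}_{\{L_i(t)\le x_1 < L_{i-1}(t)\}}$, so $\omg_i = \omg\chi_i$ and, viewing $\chi_i$ as $H(x_1-L_i(t)) - H(x_1-L_{i-1}(t))$ in $x_1$, we have $\rd_t\chi_i = -\dot L_i\,\dlt(x_1-L_i(t)) + \dot L_{i-1}\,\dlt(x_1-L_{i-1}(t))$ (a signed one-dimensional measure carried by the two vertical lines), while $\nb\chi_i = [\dlt(x_1-L_i(t)) - \dlt(x_1-L_{i-1}(t))]\,\mathbf{e}_1$. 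A preliminary remark that will be used repeatedly: every integration by parts below produces no contribution from $\{x_2=0\}$ (where $u_2 = 0$ by impermeability, and $\psi_i = 0$ by the Dirichlet condition) nor from spatial infinity (since $\omg(t,\cdot)$ has compact support of fixed measure and the stream functions decay by the bounds of \S\ref{sec:prelim}).

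For the enstrophy I would write $\dot K_i = \int 2\omg(\rd_t\omg)\chi_i + \int\omg^2\,\rd_t\chi_i$, substitute $2\omg\,\rd_t\omg = -u\cdot\nb(\omg^2)$, integrate by parts using $\nb\cdot u = 0$ to convert the first term into $\int\omg^2\,u\cdot\nb\chi_i = \int_{\{x_1=L_i\}}\omg^2 u_1\,dx_2 - \int_{\{x_1=L_{i-1}\}}\omg^2 u_1\,dx_2$, and then combine with the two $\dot L$-terms coming from $\int\omg^2\,\rd_t\chi_i$; this collapses to $\mathbf{F}_{i-1}[\omg^2] - \mathbf{F}_i[\omg^2]$, which is \eqref{eq:Ki-dt}, and \eqref{eq:Ki-dt-sum} follows by telescoping (with $\mathbf{F}_0 = 0$). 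The impulse is identical except that $\int x_2(\rd_t\omg)\chi_i$, after the same integration by parts, generates an extra term $\int u_2\omg_i\,dx = \sum_j \int u_{j,2}\omg_i\,dx$ because $\nb x_2 = \mathbf{e}_2$; the self-term $\int u_{i,2}\omg_i$ vanishes since the kernel of $u_{i,2}$ in \eqref{eq_u2_temp} is antisymmetric under $x\leftrightarrow y$ (note $|x-\bar y| = |\bar x - y|$). This gives \eqref{eq:mui-dt}, and for \eqref{eq:mui-dt-sum} the flux terms telescope while $\sum_{i\le j}\sum_{j'\ne i}\int u_{j',2}\omg_i$ is split into $j'\le j$ and $j'>j$: the first piece is $\int (u_{\le j})_2\,\omg_{\le j}$, which vanishes by the same antisymmetry, and the second is $\int (u_{\ge j+1})_2\,\omg_{\le j}$.

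For the energy, I would first use self-adjointness of the Dirichlet inverse: since $\psi_i = \lap^{-1}\omg_i$ and $\rd_t\psi_i = \lap^{-1}\rd_t\omg_i$, one has $\int(\rd_t\psi_i)\omg_i = \int\psi_i\,\rd_t\omg_i$, hence $\dot E_i = -\tfrac12\tfrac{d}{dt}\int\psi_i\omg_i = -\int\psi_i\,\rd_t\omg_i$. Expanding $\rd_t\omg_i = -(u\cdot\nb\omg)\chi_i + \omg\,\rd_t\chi_i$ and integrating the transport term by parts once (moving the derivative off $\omg$ via $\nb\cdot u = 0$, with the $\{x_2=0\}$ boundary term killed by $\psi_i=0$) produces $-\int\omg_i\,u\cdot\nb\psi_i$, two line integrals $\int_{\{x_1=L_{i-1}\}}\omg\psi_i u_1 - \int_{\{x_1=L_i\}}\omg\psi_i u_1$, and the term $-\int\psi_i\omg\,\rd_t\chi_i$; the last two groups combine into $\mathbf{F}_{i-1}[-\psi_i\omg] - \mathbf{F}_i[-\psi_i\omg]$, while $-\int\omg_i\,u\cdot\nb\psi_i = -\sum_j\int\omg_i\,u_j\cdot\nb\psi_i$ and the $j=i$ term vanishes because $u_i = \nb^\perp\psi_i$ is pointwise orthogonal to $\nb\psi_i$. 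This is exactly \eqref{eq:Ei-dt}.

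The main obstacle is not the algebra but the regularity: $\omg(t,\cdot)$ is only of Yudovich class, so $\nb\omg$ is not classical and the traces of $\omg$ on the lines $\{x_1=L_i(t)\}$ require justification, and one must check that $t\mapsto K_i,\mu_i,E_i$ are absolutely continuous with the stated derivatives. I would resolve this by first proving the identities for smooth, rapidly decaying solutions — where the Reynolds transport theorem applies verbatim — and then passing to the limit along a mollification of $\omg_0$, using stability of the Yudovich flow together with the uniform compact-support and $L^1\cap L^\infty$ bounds; alternatively one may rewrite $K_{\le j}(t)$ and $\mu_{\le j}(t)$ Lagrangianly via the flow map $\Phi$ (as in \S\ref{subsec:decomp-N-vortices}) and differentiate those expressions directly. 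Once this approximation step is set up, all the computations above are routine.
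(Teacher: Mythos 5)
Your proposal is correct and follows essentially the same route as the paper: differentiate the moving-strip integrals (your distributional $\rd_t\chi_i$, $\nb\chi_i$ is just the Leibniz rule for the moving domain phrased differently), integrate by parts using $\nb\cdot u=0$ with the $\{x_2=0\}$ contributions killed by $u_2=0$ and $\psi_i=0$, cancel the self-terms by antisymmetry of the kernel (your pointwise orthogonality $\nb\psi_i\cdot\nb^\perp\psi_i=0$ for the energy is the same cancellation the paper invokes), and use self-adjointness of $\lap^{-1}$ — equivalently the $x\leftrightarrow y$ symmetry of the double integral — to remove the factor $\tfrac12$ in $\dot E_i$. The only addition is your closing remark on justifying the computation in the Yudovich class by mollification, which the paper leaves implicit.
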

	The rest of this section is devoted to the proof of this proposition.
	
	\subsubsection{Enstrophy} We compute 
	\begin{equation*}
		\begin{split}
			\dot K_{i}(t) & = \frac{d}{dt} \int_{ \{L_{i}(t) < x_{1} < L_{i-1}(t)\} } \omg^{2}(t,x) dx \\ 
			& = \int_{ \{ x_{1} = L_{i-1}(t) \} } \left( \dot{L}_{i-1} - u_{1}(t,x) \right) \omg^{2}(t,x) dx_{2} -  \int_{ \{ x_{1} = L_{i}(t) \} } \left( \dot{L}_{i} - u_{1}(t,x) \right) \omg^{2}(t,x) dx_{2} .
		\end{split}
	\end{equation*} This gives \eqref{eq:Ki-dt}. Summation in $i$ gives \eqref{eq:Ki-dt-sum}. 
	
	\subsubsection{Impulse}
	We compute 
	\begin{equation*}
		\begin{split}
			\dot  \mu_{i}(t) & = \frac{d}{dt} \int_{ \{L_{i}(t) < x_{1} < L_{i-1}(t)\} } x_{2}\omg(t,x) dx \\ 
			& = \int_{ \{L_{i}(t) < x_{1} < L_{i-1}(t)\} } u_{2}(t,x)\omg(t,x) dx \\
			& \qquad + \int_{ \{ x_{1} = L_{i-1}(t) \} } \left( \dot{L}_{i-1} - u_{1}(t,x) \right) x_{2}\omg(t,x) dx_{2} -  \int_{ \{ x_{1} = L_{i}(t) \} } \left( \dot{L}_{i} - u_{1}(t,x) \right) x_{2}\omg(t,x) dx_{2} \\
			& = \sum_{j \ne i} \int_{ \{L_{i}(t) < x_{1} < L_{i-1}(t)\} } u_{j,2}(t,x)\omg_{i}(t,x) dx \\
			& \qquad + \int_{ \{ x_{1} = L_{i-1}(t) \} } \left( \dot{L}_{i-1} - u_{1}(t,x) \right) x_{2}\omg(t,x) dx_{2} -  \int_{ \{ x_{1} = L_{i}(t) \} } \left( \dot{L}_{i} - u_{1}(t,x) \right) x_{2}\omg(t,x) dx_{2} 
		\end{split}
	\end{equation*} where we used that \begin{equation*}
		\begin{split}
			\int_{ \{L_{i}(t) < x_{1} < L_{i-1}(t)\} } u_{i,2}(t,x)\omg_{i}(t,x) dx = 0 
		\end{split}
	\end{equation*} by anti-symmetry of the Biot--Savart kernel. Therefore, we obtain \eqref{eq:mui-dt}.  Summation in $i$ gives \eqref{eq:mui-dt-sum}, with the observation \begin{equation*}
		\begin{split}
			\int  u_{j,2}(t,x)\omg_{i}(t,x) dx = \int (\rd_{x_1}\lap^{-1} \omg_{j})(t,x)\omg_{i}(t,x) =  -  \int  u_{i,2}(t,x)\omg_{j}(t,x) dx . 
		\end{split}
	\end{equation*}
	
	\subsubsection{Energy} 
	Recall that the energy $E_i(t)$ is given by
	\[
	E_i(t) = E[\omega_i(t)] =  \frac{1}{2}\int_{ \{L_{i}(t) < x_{1} < L_{i-1}(t)\} } \int_{ \{L_{i}(t) < y_{1} < L_{i-1}(t)\} } \omega(t,x)\omega(t,y)\mathbf{G}(x,y)dxdy.
	\]
	When taking its time derivative, due to the symmetry between $x$ and $y$, we can apply the derivative only to $\omega(t,x)$ and the integral limits of $x$, and remove the coefficient $1/2$. This yields the following, where we also use the definition $\psi_i(t,x) := -\int_{ \{L_{i}(t) < y_{1} < L_{i-1}(t)\} } \omega(t,y)\mathbf{G}(x,y)dy$:
	\begin{equation}\label{temp_E}
		\begin{split}
			\dot{E}_i(t) &= \int_{ \{L_{i}(t) < x_{1} < L_{i-1}(t)\} } -\psi_{i}(t,x) (\rd_{t}\omg)(t,x) dx + \\
			&\qquad + \dot{L}_{i}(t) \int_{ \{ x_{1} = L_{i}(t) \} } \psi_{i}(t,x) \omg(t,x) dx_{2} -  \dot{L}_{i-1}(t) \int_{ \{ x_{1} = L_{i-1}(t) \} } \psi_{i}(t,x) \omg(t,x) dx_{2} . \\
		\end{split}
	\end{equation} For the first integral on the right-hand side, we have \begin{equation*}
		\begin{split}
			&\int_{ \{L_{i}(t) < x_{1} < L_{i-1}(t)\} } -\psi_{i}(t,x) (\rd_{t}\omg)(t,x) dx = \int_{ \{L_{i}(t) < x_{1} < L_{i-1}(t)\} } \psi_{i}(t,x) \nb\cdot((u\omg)(t,x)) \,  dx \\
			& \quad =  \int_{ \{L_{i}(t) < x_{1} < L_{i-1}(t)\} } -\nb\psi_{i}(t,x) \cdot (u \omg_{i})(t,x) \,  dx \\
			&\qquad - \int_{ \{ x_{1} = L_{i}(t) \} } \psi_{i}(t,x)(u_{1}\omg)(t,x)dx + \int_{ \{ x_{1} = L_{i-1}(t) \} }\psi_{i}(t,x)(u_{1}\omg)(t,x)dx 
		\end{split}
	\end{equation*} and we further note that, \begin{equation*}
		\begin{split}
			\int_{ \{L_{i}(t) < x_{1} < L_{i-1}(t)\} } -\nb\psi_{i}(t,x) \cdot (u\omg_{i})(t,x) \,  dx  = \sum_{j \ne i}  \int_{ \{L_{i}(t) < x_{1} < L_{i-1}(t)\} } -\nb\psi_{i}(t,x) \cdot (u_{j}\omg_{i})(t,x) \,  dx 
		\end{split}
	\end{equation*} holds, using a similar cancellation to the impulse case.

	Plugging the above into \eqref{temp_E} yields \begin{equation*}
		\begin{split}
			&	\dot  E_{i}(t)  = \sum_{j \ne i}  \int_{ \{L_{i}(t) < x_{1} < L_{i-1}(t)\} } -\nb\psi_{i}(t,x) \cdot (u_{j}\omg_{i})(t,x) \,  dx \\
			&\qquad +  \int_{ \{ x_{1} = L_{i-1}(t) \} }  (\dot{L}_{i-1}(t) - u(t,x))(-\psi_{i}\omg)(t,x) dx_{2}  -  \int_{ \{ x_{1} = L_{i}(t) \} }  (\dot{L}_{i}(t) - u(t,x))(-\psi_{i}\omg)(t,x) dx_{2}  .
		\end{split}
	\end{equation*} This gives \eqref{eq:Ei-dt}.  
	\subsection{Closing the assumption on enstrophy}
	
	\begin{proposition}\label{prop:enstrophy-upper-bound}
		We have for all $1 \le i \le N-1$ that  \begin{equation}\label{eq:enstrophy-upper-bound}
			\begin{split}
				\sup_{t \in [0,T]}   K_{\le i}(t) \le  K_{\le i}(0) .%< \dlt + \sum_{1 \le j \le i}  \bar{K}_{j}. 
			\end{split}
		\end{equation}
	\end{proposition}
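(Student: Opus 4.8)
The plan is to integrate the exact evolution identity \eqref{eq:Ki-dt-sum} and show that the flux term appearing there has a favorable sign throughout $[0,T]$. Recall from the proposition in \S\ref{subsec:formulae} that for $1 \le i \le N-1$,
\begin{equation*}
\dot{K}_{\le i}(t) = -\mathbf{F}_{i}[\omg^{2}](t) = -\int_{ \{ x_{1} = L_{i}(t),\, x_2 \ge 0 \} } \left( \dot{L}_{i} - u_{1}(t,x) \right) \omg^{2}(t,x)\, dx_{2},
\end{equation*}
so it suffices to verify that $\dot{L}_{i} - u_{1}(t,x) \ge 0$ on the line $\{ x_{1} = L_{i}(t) \}$ for all $t \in [0,T]$. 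Indeed, once this is known, the integrand above is nonnegative (the weight $\omg^{2}$ is obviously nonnegative), hence $\mathbf{F}_{i}[\omg^{2}](t) \ge 0$ and $\dot{K}_{\le i}(t) \le 0$ on $[0,T]$; integrating from $0$ to $t$ yields $K_{\le i}(t) \le K_{\le i}(0)$, which is \eqref{eq:enstrophy-upper-bound}.

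To establish the sign of $\dot{L}_{i} - u_{1}$, I would use \eqref{eq:L-def} and Lemma \ref{lem:u-border}. From \eqref{eq:L-def}, $\dot{L}_{i} = \tfrac12(\bar{V}_{i} + \bar{V}_{i+1})$, which is a strictly positive constant since all the dipole speeds $\bar{V}_{j}$ are positive. On the other hand, the line $\{ x_{1} = L_{i}(t) \}$ is contained in the strip $\{ x_{1} \in [L_{i}^{-}(t), L_{i}^{+}(t)] \}$, so Lemma \ref{lem:u-border} applies and gives $|u_{1}(t,x)| \le |u(t,x)| \le \varep^{1/2}\dot{L}_{i}$ there. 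Therefore $\dot{L}_{i} - u_{1}(t,x) \ge (1 - \varep^{1/2})\dot{L}_{i} > 0$, provided $\varep_{0} < 1$, which we may certainly assume.

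Combining the two steps gives $\mathbf{F}_{i}[\omg^{2}](t) \ge 0$ for all $t \in [0,T]$ and all $1 \le i \le N-1$, so $K_{\le i}$ is nonincreasing on $[0,T]$ and \eqref{eq:enstrophy-upper-bound} follows. There is no real obstacle here: the only input beyond the algebraic identity \eqref{eq:Ki-dt-sum} is the border velocity bound of Lemma \ref{lem:u-border}, and that is precisely where the hypothesis of large separation (encoded through $D_{0}$) is doing its work, by forcing the velocity near the midpoint borders to be negligible relative to the border speed $\dot{L}_{i}$. As a byproduct this re-derives, quantitatively, the fact that fluid particles can only cross each $L_{i}(t)$ from right to left, which was invoked in Lemmas \ref{lem:gain-bounds} and \ref{lem:omg-err-small}.
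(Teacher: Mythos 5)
Your proof is correct and is essentially identical to the paper's own argument: both rely on the identity \eqref{eq:Ki-dt-sum} together with the border velocity bound \eqref{eq:u-border} from Lemma \ref{lem:u-border} to conclude that $\mathbf{F}_{i}[\omg^{2}](t) \ge 0$, hence $K_{\le i}$ is nonincreasing on $[0,T]$. The only difference is that you spell out the sign check $\dot{L}_{i} - u_{1} \ge (1-\varep^{1/2})\dot{L}_{i} > 0$ explicitly, which the paper leaves implicit.
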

	
	\begin{proof}
		Using \eqref{eq:u-border}, we have that $\bfF_{i}[\omg^{2}](t) \ge 0$ for any $1 \le i \le N-1$. Therefore,   \begin{equation*}
			\begin{split}
				\dot K_{\le i}(t) = - \bfF_{i}[\omg^{2}](t) \le 0, 
			\end{split}
		\end{equation*} for any $1 \le i \le N-1$. Integrating in $t \in [0,T]$ gives \eqref{eq:enstrophy-upper-bound}. 
	\end{proof}
	
	\subsection{Closing the assumption on impulse}\label{subsec:impulse}
	
	The following proposition is at the heart of the proof, where we aim to control the impulse change for all times. The proof starts with an explicit expression of the time derivative of impulse, where we decompose the vorticity left to border into the ``gained'' part and the original part, and decompose the vorticity right to border into the ``error'' that is close to border, and the centered part that is further away. 
	
	Out of these combinations, the most difficult case is the interaction between the ``gained'' part on the left and the ``error'' part on the right, since their support are not separated apart -- both parts can be arbitrarily close to the border. Note that the smallness of these two terms themselves is not sufficient: we need to show the \emph{time integral} (in an arbitrarily long time interval) contributed by the interaction is small. This key estimate is done in {Lemma~\ref{lem:gain-error}}, where we switch to a Lagrangian viewpoint to control the time integral of the contribution from a ``gained'' particle. 
	\begin{proposition}\label{prop:impulse-upper-bound} We have for all $1 \le i \le N-1$ that 
		\begin{equation}\label{eq:impulse-upper-bound}
			\begin{split}
				\sup_{t \in [0,T]} \mu_{\le i}(t) \le \mu_{\le i}(0) + \frac{\dlt}{100}.
			\end{split}
		\end{equation} 
	\end{proposition}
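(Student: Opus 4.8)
The plan is to run the argument off the impulse identity \eqref{eq:mui-dt-sum} and use the (nonnegative) flux term to absorb the one genuinely dangerous contribution. Since the bootstrap hypotheses give $\dot L_i > |u_1|$ on $\{x_1 = L_i(t)\}$ (Lemma~\ref{lem:u-border}), we have $\mathbf{F}_i[x_2\omg](t) \ge 0$, and integrating \eqref{eq:mui-dt-sum} yields
\[
\mu_{\le i}(t) - \mu_{\le i}(0) \;=\; \int_0^t\!\!\int_{\bbR^2_+} (u_{\ge i+1})_2(s,x)\,\omg_{\le i}(s,x)\,dx\,ds \;-\; \int_0^t \mathbf{F}_i[x_2\omg](s)\,ds .
\]
The first (``lift-up'') term is nonnegative because $(u_{\ge i+1})_2>0$ on $\supp\omg_{\le i}$, so the whole problem reduces to showing that the time-integrated lift-up is at most $\int_0^t \mathbf{F}_i[x_2\omg](s)\,ds + \frac{\dlt}{200}$.

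First I would peel off all the pieces of the lift-up with well-separated supports. Using $\omg_{\ge i+1} = \omg_{\ge i+1,*} + \omg_{\ge i+1,gain}$ from \eqref{eq:omg-gain} together with $\omg_{\le i} = (\omg_{\le i} - \omg_{i,err}^{(\ell)}) + \omg_{i,err}^{(\ell)}$, and the fact (Lemma~\ref{lem:u-border}) that fluid cannot cross $L_i$ rightward, one checks that the only pairing whose two supports are not separated by $\frac{1}{10}\big(D_0 + (\bar V_i - \bar V_{i+1})t\big)$ is $\int (u_{\ge i+1,gain})_2\,\omg_{i,err}^{(\ell)}$ restricted to the buffer $[L_i^-(t),L_i(t))$ — up to the contribution of the buffer fluid that was never in $\omg_{\ge i+1}$ at $t=0$, which (again by Lemma~\ref{lem:u-border}) must have originated in the slab $[L_i(0) - \tfrac{1}{10}D_0,\, L_i(0)]$; for $D_0$ large this slab is disjoint from the supports of the reference Lamb dipoles, so this fluid carries $L^1$ and $L^2$ mass $\aleq \dlt_0$, and moreover it is overtaken by the receding border $L_i$ within a time $\aleq D_0$, after which it too becomes separated. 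For every separated piece I bound the integrand by $\aleq M^2 \big(D_0 + (\bar V_i - \bar V_{i+1})t\big)^{-2}$ using the velocity decay \eqref{eq:u2-decay2} and the uniform bounds $\|\omg\|_{L^1}\le M$, $\mu_{\le i},\mu_{\ge i+1}\le C$ (from the hypotheses, \eqref{eq:B2}, and conservation of impulse); on the short non-separated window the $\dlt_0$-small slab fluid is handled by the crude bound $\|u\|_{L^\infty}\|\omg_{i,err}^{(\ell)}\|_{L^1}$ via \eqref{eq:vel-L-infty-alpha}. Integrating in time, the total of all these contributions is $\aleq M^2/D_0 + \dlt_0 \le \frac{\dlt}{1000}$, once $D_0$ is large depending on $\dlt$ and $\dlt_0 = \dlt/C_0$ is correspondingly small.

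The remaining term, $\int_0^t\!\int (u_{\ge i+1,gain})_2\,\omg_{i,err}^{(\ell)}\,dx\,ds$ on the buffer zone, is the crux: $\omg_{\ge i+1,gain}$ (on $\{x_1 < L_i(t)\}$) and $\omg_{i,err}^{(\ell)}$ (on $[L_i(t),L_i^+(t))$) both touch $\{x_1 = L_i(t)\}$, so there is no spatial separation, and since the Biot--Savart kernel is singular at coincident points the interaction cannot be bounded by enstrophy or impulse alone; worse, a crude $L^\infty\times L^1$ estimate, though small pointwise in time, carries \emph{no time decay} and is therefore not summable over the a priori unbounded interval $[0,T]$. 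Here I pass to Lagrangian coordinates. By Lemma~\ref{lem:u-border}, $\omg_{\ge i+1,gain}(t)$ consists precisely of the fluid particles that have crossed $L_i$ from the right, and by Lemma~\ref{lem:gain-bounds} their total circulation is $\aleq \dlt^{1/2}$. Following each such gain particle along its trajectory (Proposition~\ref{lem:key-Lagrangian}), one shows that the drift advects it away from the buffer, so it can make a non-negligible contribution to the lift-up of $\omg_{i,err}^{(\ell)}$ only during a limited stretch of its lifetime — afterwards it is separated and falls under the decay regime. Summing these contributions over all gain particles, which is legitimate because their total circulation is small, bounds $\int_0^t\!\int (u_{\ge i+1,gain})_2\,\omg_{i,err}^{(\ell)}\,dx\,ds$ by the filamentation loss $\int_0^t \mathbf{F}_i[x_2\omg](s)\,ds$ up to an additive error $\aleq \dlt^{3/2}$.

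Combining the last two paragraphs, the time-integrated lift-up is at most $\frac{\dlt}{1000} + \int_0^t \mathbf{F}_i[x_2\omg](s)\,ds + C\dlt^{3/2}$, whence
\[
\mu_{\le i}(t) - \mu_{\le i}(0) \;\le\; \frac{\dlt}{1000} + C\dlt^{3/2} \;\le\; \frac{\dlt}{100}
\]
for $\dlt$ sufficiently small, uniformly in $t\in[0,T]$; this is \eqref{eq:impulse-upper-bound}. The main obstacle, as flagged, is the crux term and hence Proposition~\ref{lem:key-Lagrangian}: combining a non-decaying, kernel-singular interaction with a time interval of a priori unbounded length is exactly what forces the trajectory-by-trajectory analysis and the comparison of the gained impulse against the impulse lost to filamentation.
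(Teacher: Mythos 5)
Your proposal follows the paper's route essentially step for step: integrate \eqref{eq:mui-dt-sum}, discard the nonnegative flux $\mathbf{F}_{i}[x_{2}\omg]$, split the lift-up term via $\omg_{\ge i+1}=\omg_{\ge i+1,*}+\omg_{\ge i+1,gain}$ and $\omg_{\le i}=(\omg_{\le i}-\omg_{i,err}^{(\ell)})+\omg_{i,err}^{(\ell)}$, dispose of the spatially separated pairings by the velocity decay of Lemma \ref{lem:vel-decay} together with the linear-in-time growth of the separation, and reduce the crux to the Gain--Error pairing, which you correctly identify and treat by the trajectory-by-trajectory mechanism of Lemma \ref{lem:key-Lagrangian} (small total circulation of the gain, times a per-particle time integral controlled by linear escape from the buffer). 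One correct observation you add that the paper does not use is that the non-gain fluid occupying the left buffer must originate in the initial slab $[L_{i}(0)-D_{0}/10,\,L_{i}(0)]$ and hence carries only $O(\dlt_{0})$ circulation and enstrophy. (Minor point: this is the fluid that \emph{was} in $\omg_{\ge i+1}$ at $t=0$, i.e.\ the $*$ part, not the fluid that was ``never in $\omg_{\ge i+1}$ at $t=0$.'')

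However, your treatment of precisely that pairing --- $\omg_{i,err}^{(\ell)}$ against the $*$-fluid in $[L_{i}^{-}(t),L_{i}(t))$, the term $I_{*,3}$ in the paper --- does not close as written. You bound the non-separated window by a crude product estimate and take the window to have length $\aleq D_{0}$ (the time for the buffer's left edge to overtake this fluid). But every available time-independent pointwise bound on this interaction is of size at most $O(\dlt^{1+\sigma})$ with $\sigma<1/2$ (e.g.\ $\nrm{u_{i,err}^{(\ell)}}_{L^{\infty}}\nrm{\omg_{slab}}_{L^{1}}\aleq \dlt^{3/8}\dlt_{0}$, or $\nrm{u_{slab}}_{L^{\infty}}\nrm{\omg_{i,err}^{(\ell)}}_{L^{1}}\aleq \dlt_{0}^{2/3}\dlt^{1/2}$ via \eqref{eq:vel-L-infty-alpha}), while elsewhere in the argument one needs $D_{0}>1/\dlt$; with $\dlt_{0}=\dlt/C_{0}$ and $C_{0}$ independent of $D_{0}$, the window contribution is then at least a power of $\dlt$ strictly below $1$, so your claimed total $\aleq M^{2}/D_{0}+\dlt_{0}$ does not follow. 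This is repairable --- either let $\dlt_{0}$ degenerate with $D_{0}$ (permitted by the hierarchy $\dlt\to D_{0}\to\dlt_{0}$, though it abandons $\dlt_{0}=\dlt/C_{0}$ with $C_{0}$ a fixed constant), or, as the paper does, note that this fluid was already in the buffer at $t=0$ and moves with horizontal speed at most $\varep^{1/2}\dot{L}_{i}$ while the border advances at speed $\dot{L}_{i}$, so its distance to $\{x_{1}=L_{i}(t)\}$ is at least $(1-\varep^{1/2})\dot{L}_{i}t$ for \emph{all} $t$; the decay regime then starts already at $T_{\dlt}=\dlt^{5/12}$, the brute-force window is $[0,T_{\dlt}]$ rather than $[0,CD_{0}]$, and the total is $C\dlt^{7/6}$ with no additional constraint on $\dlt_{0}$.
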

	\begin{proof}	We consider the case $i = 1$, and begin by recalling \eqref{eq:mui-dt}: \begin{equation}\label{eq:def-I}
			\begin{split}
				\dot \mu_{1}(t) +  \mathbf{F}_{1}[x_{2}\omg](t) & = \sum_{j>1} \int  (u_{j})_{2}(t,x)\omg_{1}(t,x) dx =: I(t). 
			\end{split}
		\end{equation}  The  term $ \mathbf{F}_{1}[x_{2}\omg](t)$ is nonnegative by Lemma \ref{lem:u-border}. {The right-hand side (denoted by $I(t)$) can be rewritten as follows, with \begin{equation*}
				\begin{split}
					(u_{1})_{2}(t,x) = \int_{ \{ y_{1} \ge L_{1}(t) \} } K_{2}(x,y) \omg_{1}(t,y) \, dy , \qquad K_{2}(x,y) :=  -\frac{1}{2\pi} \frac{x_1 - y_1}{|x-y|^{2}} \, \frac{4x_2y_2}{|x-\bar{y}|^{2}}
				\end{split}
			\end{equation*} denoting the second component of $u_1$: \begin{equation*}
				\begin{split}
					I(t) & := \int_{ \bbR_+^2 } \left( \int_{\{ y_{1} < L_{1}(t) \}} K_{2}(x,y) \omg(t,y) \, dy  \right)  \omg_{1}(t,x) dx \\
					& = - \int_{\{ y_{1} < L_{1}(t) \}}  \left( \int_{ \{ x_{1} \ge L_{1}(t) \} } K_{2}(y,x) \omg_{1}(t,x) dx \right)  \omg(t,y) dy \\
					& %= \int_{\{ y_{1} \ge L_{1}(t) \}}  \left( \int_{ \bbR_+^2 } K_{2}(y,x) \omg_{1}(t,x) dx \right)  \omg(t,y) dy 
					= \int_{ \{ x_{1} < L_{1}(t) \} } \omg(t,x) (u_{1})_{2} (t,x) dx, 
				\end{split}
			\end{equation*} where we have switched from $y$ and $x$ used the cancellations \begin{equation*}
				\begin{split}
					\int_{ \bbR_+^2 } \int_{ \bbR_+^2 } K_{2}(x,y) \omg(t,x)\omg(t,y) \, dx dy = 0  = \int_{ \bbR_+^2 } \int_{ \bbR_+^2 } K_{2}(x,y) \omg_{1}(t,x)\omg_{1}(t,y) \, dx dy
				\end{split}
			\end{equation*} (by antisymmetry in $x$ and $y$) in the last equality.} We recall the decomposition \eqref{eq:omg-gain}--\eqref{eq:omg-gain-def}: \begin{equation*}
			\begin{split}
				\omg(t,x) \, \mathbf{1}_{ \{ x_{1} < L_{1}(t)  \} } = \omg_{\ge2}(t,x) = \omg_{\ge2,*}(t,x) + {\omg}_{\ge 2,gain}(t,x),
			\end{split}
		\end{equation*} where ${\omg}_{\ge 2, gain}(t,\cdot)$ is the part of $\omg_{\ge 2}$ which have crossed the line $L_{1}(t)$ from right to left during the time interval $[0,t]$. Correspondingly, we decompose $I(t) = I_*(t) + I_{gain}(t)$, where \begin{equation*}
			\begin{split}
				I_{*}(t) &=  \int_{ \{ x_{1} < L_{1}(t) \} } \omg_{\ge2, *}(t,x) (u_{1})_{2}(t,x) \, dx \\
				& = -\frac1{2\pi} \int_{ \{ x_{1} < L_{1}(t) \} }  \int_{ \{ y_{1} \ge L_{1}(t) \} }  \frac{x_1 - y_1}{|x-y|^{2}} \, \frac{4x_2y_2}{|x-\bar{y}|^{2}}  \,  \omg_{\ge2, *}(t,x)  \omg_{1}(t,y)     \, dy dx . 
			\end{split}
		\end{equation*} We further decompose $I_*(t) = I_{*, 1}(t) + I_{*, 2}(t) + I_{*, 3}(t)$, where \begin{equation*}
			\begin{split}
				I_{*, 1}(t) = -\frac1{2\pi} \int_{ \{ x_{1} < L_{1}(t) \} }  \int_{ \{ y_{1} \ge L_{1}^{+}(t) \} }  \frac{x_1 - y_1}{|x-y|^{2}} \, \frac{4x_2y_2}{|x-\bar{y}|^{2}}  \,  \omg_{\ge2, *}(t,x)  \omg_{1}(t,y)     \, dy dx , 
			\end{split}
		\end{equation*}   \begin{equation*}
			\begin{split}
				I_{*, 2}(t) = -\frac1{2\pi} \int_{ \{ x_{1} < L_{1}^{-}(t) \} }  \int_{ \{ L_{1}(t) \le y_{1} < L_{1}^{+}(t) \} }  \frac{x_1 - y_1}{|x-y|^{2}} \, \frac{4x_2y_2}{|x-\bar{y}|^{2}}  \,  \omg_{\ge2, *}(t,x)  \omg_{1}(t,y)     \, dy dx , 
			\end{split}
		\end{equation*}
		\begin{equation*}
			\begin{split}
				I_{*, 3}(t) = -\frac1{2\pi} \int_{ \{ L_{1}(t) \ge x_{1} \ge L_{1}^{-}(t) \} }  \int_{ \{ L_{1}(t) \le y_{1} < L_{1}^{+}(t) \} }  \frac{x_1 - y_1}{|x-y|^{2}} \, \frac{4x_2y_2}{|x-\bar{y}|^{2}}  \,  \omg_{\ge2, *}(t,x)  \omg_{1}(t,y)     \, dy dx.  
			\end{split}
		\end{equation*} We can estimate $I_{*, 1}(t)$ and $I_{*, 2}(t)$ similarly: note that on their integral domains we have \begin{equation*}
			\begin{split}
				|x-y|, |x-\bar{y}| \ge |x_1 -y_1| \ge \frac{1}{10}\left(D_{0} +  (\bar{V}_{1} - \bar{V}_{2} )t \right) 
			\end{split}
		\end{equation*} and we can therefore bound the absolute value of the kernel by \begin{equation*}
			\begin{split}
				\frac{C x_{2} y_{2}}{\left(D_{0} +  (\bar{V}_{1} - \bar{V}_{2} )t \right)^{3} }. 
			\end{split}
		\end{equation*} Using simply that the impulse of $\omg_{1}$ and $\omg_{\ge 2, *}$ is bounded by a time-independent constant,  we have \begin{equation}\label{eq:I-star}
			\begin{split}
				\int_0^T\left|I_{*,1}(t) \right| + \left|I_{*,2}(t) \right|  dt \le C\int_0^{T}  \frac{1}{\left(D_{0} +  (\bar{V}_{1} - \bar{V}_{2} )t \right)^{3} } \, dt \le C\dlt^{2} \le \dlt^{3/2}, 
			\end{split}
		\end{equation} by taking $D_{0}>1/\dlt$ and $\dlt$ smaller if necessary. Note that the constants are independent of $T$. 
		
		In the case of $I_{*,3}(t)$, we observe that by Lemma~\ref{lem:u-border} and the definition of $\omg_{\ge 2, *}$, the distance between $\supp( \omg_{\ge 2, *}(t,\cdot) ) \cap \{x_1 \in [L_1^-(t), L_1(t)]\}$ and the border $\{x_1=L_1(t)\}$ is at least $(1-\varep^{1/2})\dot L_1 t \geq \frac{1}{4}(\bar V_1 + \bar V_2) t$. Therefore, for all $x \in \supp( \omg_{\ge 2, *}(t,\cdot) ) \cap \{x_1 \in [L_1^-(t), L_1(t)]\}$ and $y \in \supp( \omg_{1}(t,\cdot) )$, we have  \begin{equation*}
			\begin{split}
				|x-y|, |x-\bar{y}| \ge |x_1 -y_1| \ge L_1(t)-x_1 \ge \frac14 (\bar V_1 + \bar V_2) t.
			\end{split}
		\end{equation*} We use this bound just for $t \ge T_{\dlt}$, for some $T_{\dlt}>0$ to be determined: proceeding similarly as before, we get this time \begin{equation*}
			\begin{split}
				\int_{T_{\dlt}}^T\left|I_{*,3}(t) \right|  dt \le C\dlt^{2}\int_{T_\dlt}^{T}  \frac{1}{\left( (\bar{V}_{1} + \bar{V}_{2} )t \right)^{3} } \, dt \le \frac{C \dlt^{2}}{T_{\dlt}^{2}},
			\end{split}
		\end{equation*} where we have used that \begin{equation*}
			\begin{split}
				\int_{ \{ L_{1}(t) \ge x_{1} \ge L_{1}^{-}(t) \} } x_{2}\omg_{\ge 2, *}(t,x) dx \le C\dlt , \quad   \int_{ \{ L_{1}(t) \le y_{1} < L_{1}^{+}(t) \} } y_{2} \omg_{1}(t,y) dy \le C\dlt. 
			\end{split}
		\end{equation*} To treat $0<t\le T_{\dlt}$, we just do the brute force bound  
		\begin{equation}
			\begin{split}
				|I_{*, 3}(t)| &\le  C\int_{ \{ L_{1}(t) \ge x_{1} \ge L_{1}^{-}(t) \} }  \int_{ \{ L_{1}(t) \le y_{1} < L_{1}^{+}(t) \} }  \frac{1}{|x-y|}  \,  \omg_{\ge2, *}(t,x)  \omg_{1}(t,y)     \, dy dx \\
				& \le C \dlt^{1/4} \nrm{\omg_{1}(t,y)   \mathbf{1}_{ \{ L_{1}(t) \le y_{1} < L_{1}^{+}(t) \} }}_{L^1} \le \dlt^{3/4}, 
			\end{split}
		\end{equation}  where we used \eqref{eq:vel-L-infty-alpha} with $\alp = 1/2$ for $\omg = \omg_{\ge 2, *}(t,x) \mathbf{1}_{\{ L_{1}(t) \ge x_{1} \ge L_{1}^{-}(t) \}}$ which has $L^{2}$ norm bounded by $C\dlt$. Hence, combining two bounds,  \begin{equation}\label{I3}
			\begin{split} \int_{0}^T\left|I_{*,3}(t) \right|  dt  \le \frac{C \dlt^{2}}{T_{\dlt}^{2}} +C T_{\dlt} \dlt^{3/4} \le C\dlt^{7/6},
			\end{split}
		\end{equation} by selecting $T_{\dlt} = \dlt^{5/12}$. 
		
		\medskip 
		
		To deal with $I_{gain}(t)$, %the ${\omg}_{\ge2, gain}$ part, 
		we now use the decomposition\footnote{For $i \ge 2$, we need to also take into account $u_{i,err}^{(r)}$, but it can be handled together with $u_{i,cen}$.}  
		\begin{equation*}
			\begin{split}
				u_{1}(t,x) = u_{1,err}^{(\ell)}(t,x) + u_{1,cen}(t,x) 
			\end{split}
		\end{equation*} where we recall that \begin{equation*}
			\begin{split}
				u_{1,err}^{(\ell)}(t,x) = -\frac1{2\pi} \int_{ \{ L_{1}(t) \le y_{1} < L_{1}^{+}(t) \} } \frac{x_1 - y_1}{|x-y|^{2}} \, \frac{4x_2y_2}{|x-\bar{y}|^{2}}  \omg(t,y)       \, dy. 
			\end{split}
		\end{equation*} 
		Then similarly as in the above, we can derive \begin{equation}\label{eq:I-gain}
			\begin{split}
				\int_0^{T}  \left| \int_{ \{ x_{1} < L_{1}(t) \} }  {\omg}_{\ge2,gain}(t,x) (u_{1,cen})_{2}(t,x) \, dx\right| \, dt \le \dlt^{2}
			\end{split}
		\end{equation}  using separation of support: if $x \in \supp(\omg_{\ge2,gain})$ and $L_{1}^{+}(t) < y_{1}$ then \begin{equation*}
			\begin{split}
				|x-y| \ge \frac1{10}\left( D_{0} + (\bar{V}_{1} - \bar{V}_{2})t \right). 
			\end{split}
		\end{equation*}
		We now claim the following key estimate. 
		\begin{lemma}[Gain--Error interaction]\label{lem:gain-error} We have 
			\begin{equation}\label{eq:key-Lagrangian}
				\begin{split}
					\int_0^{T}  \int_{ \{ x_{1} < L_{1}(t) \} } {\omg}_{\ge2, gain}(t,x) (u_{1,err}^{(\ell)})_{2}(t,x) \, dx  \, dt  \le \dlt^{4/3}. 
				\end{split}
			\end{equation} 
		\end{lemma}
		Assuming \eqref{eq:key-Lagrangian}, the proof of \eqref{eq:impulse-upper-bound} for $i = 1$ is complete, by combining it with \eqref{eq:I-star}, \eqref{I3} and \eqref{eq:I-gain}, and taking $\dlt$ smaller than $10^{-10}$, if necessary. The proof for $1 < i \le N - 1$ is completely parallel with this case, using the decomposition $\omg_{\ge i+1} = \omg_{\ge i+1, *} + \omg_{\ge i+1, gain}$. 
	\end{proof}
	
	\begin{proof}[Proof of Lemma \ref{lem:gain-error}]
		To begin with, we write $F := (u_{1,err}^{(\ell)})_{2}$ and make an area preserving change of variables $x \mapsto \Phi(t,x)$ to rewrite the integral as \begin{equation*}
			\begin{split}
				\int_0^{T} \int_{ \{ x_{1} \ge L_{1}(0) \} \cap \{ \Phi_{1}(t,x) < L_{1}(t) \} } \omg_{0}(x) \, F(t,\Phi(t,x)) \, dx \, dt ,
			\end{split}
		\end{equation*}  where we use the definition of ${\omg}_{\ge2, gain}(t,\cdot)$: recall that every fluid particle in its support originates on the right of $L_1(0)$ at time $0$, and has crossed to the left of $L_1(t)$ by time $t$. Using Fubini's theorem, the integral equals \begin{equation*}
			\begin{split}
				\int_{ \{ x_{1} \ge L_{1}(0) \} } \omg_{0}(x) \, \mathbf{1}_{ \{ \tau(x) \le T \} }(x) \left[  \int_{ \min\{\tau(x),T\}}^{T} F(t,\Phi(t,x)) \,dt  \right]   dx, \quad \tau(x) := \inf\{ t \ge 0:  \Phi_{1}(t,x) < L_{1}(t) \}. 
			\end{split}
		\end{equation*} We set $\tau(x) = +\infty$ if $\Phi_{1}(t,x) \ge L_{1}(t)$ for all $t\ge0$. 
		
		We have a uniform in time bound for $F$, applying \eqref{eq:vel-L-infty-alpha} with $\alp = 3/4$ together with \eqref{eq:omg-err-small}:  \begin{equation}\label{eq:F1}
			\begin{split}
				\nrm{F(t,\cdot)}_{L^{\infty}} \le C \dlt^{3/8} .  
			\end{split}
		\end{equation}

		Furthermore, we observe that once a ``fluid particle'' starting initially at $x$ crosses the line $\{ x_{1} = L_{1}(t) \}$ from the right to left at time $t=\tau(x)$ (i.e. $\Phi_{1}(\tau(x),x) = L_{1}(\tau(x))$, by Lemma~\ref{lem:u-border}, its distance to the borderline $L_{1}(t)$ increases linearly:
		we have that \begin{equation*}
			\begin{split}
				|L_{1}(t) - \Phi_{1}(t,x) | = L_{1}(t) - \Phi_{1}(t,x)  \ge \frac1{10}|\bar{V}_{1} - \bar{V}_{2}|(t - \tau(x)) \qquad \text{ for all } t \ge \tau(x).
			\end{split}
		\end{equation*}  Hence, for $T > t > \tau(x)$, we have a pointwise estimate from \eqref{eq:u-decay}: \begin{equation}\label{eq:F2}
			\begin{split}
				F(t,\Phi(t,x)) \le C\nrm{ \omg_{1,err}^{(\ell)} }_{L^1_* } |t - \tau(x)|^{-2} \le C\dlt |t - \tau(x)|^{-2} , \qquad t \in (\tau(x), T). 
			\end{split}
		\end{equation}  
		
		Equipped with two bounds \eqref{eq:F1} and \eqref{eq:F2}, we can now integrate for each $x$: \begin{equation*}
			\begin{split}
				\left|\int_{\min\{ \tau(x), T \} }^{T} F(t,\Phi(t,x)) \,dt\right| \le C\int_{\min\{ \tau(x), T \} }^{T}  \min\left\{ \dlt^{3/8} , \dlt|t-\tau(x)|^{-2}  \right\} dt \le C\dlt^{11/16}. 
			\end{split}
		\end{equation*} Finally, we recall the $L^{1}$ bound \eqref{eq:gain-bounds} from Lemma \ref{lem:gain-bounds}: \begin{equation*}
			\begin{split}
				\int_{ \{ x \ge L_{1}(0) \} } \omg_{0}(x) \, \mathbf{1}_{ \{ \tau(x) < T \} }(x) dx =  \nrm{\omg_{\ge2, gain}(T,\cdot)}_{L^{1}} \le C \dlt^{ 1/2 }. 
			\end{split}
		\end{equation*} Therefore, \begin{equation*}
			\begin{split}
				\int_{ \{ x \ge L_{1}(0) \} } \omg_{0}(x) \, \mathbf{1}_{ \{ \tau(x) < T \} }(x) \,  \left[\int_{\min\{ \tau(x), T \} }^{T} F(t,\Phi(t,x)) \,dt \right] \, dx \le C\dlt^{5/8+1/2} \le \dlt^{9/8}, 
			\end{split}
		\end{equation*} which finishes the proof. 
	\end{proof}
	
	\subsection{Closing the assumption on energy}\label{subsec:energy}
	\begin{proposition}\label{prop:energy-upper-bound} For all $1 \le i \le N$, we have 
		\begin{equation}\label{eq:energy-upper-bound}
			\begin{split}
				\inf_{t \in [0,T]} E_{i}(t) \ge E_{i}(0) - \frac{\dlt}{100}.
			\end{split}
		\end{equation} 
	\end{proposition}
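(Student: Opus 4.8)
The plan is to integrate the identity \eqref{eq:Ei-dt} on $[0,T]$ and show that the two potentially negative contributions to $E_{i}(t)-E_{i}(0)$ --- the boundary flux $-\mathbf{F}_{i}[-\psi_{i}\omg](t)$ through the left border $L_{i}(t)$, and the interaction term $\sum_{j\ne i}\int-\nb\psi_{i}\cdot(u_{j}\omg_{i})\,dx$ --- have time integrals bounded by $\dlt/200$ each, which immediately gives $\inf_{t\in[0,T]}E_{i}(t)\ge E_{i}(0)-\dlt/100$. The remaining flux $\mathbf{F}_{i-1}[-\psi_{i}\omg](t)$ is nonnegative and is simply discarded: $-\psi_{i}\ge0$ on $\bbR^{2}_{+}$ since $\omg_{i}\ge0$, and $\dot L_{i-1}-u_{1}>0$ on $\{x_{1}=L_{i-1}(t)\}$ by Lemma \ref{lem:u-border}. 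Throughout I use that Propositions \ref{prop:enstrophy-upper-bound} and \ref{prop:impulse-upper-bound} are already available.

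For the flux loss I exploit the heuristic that $E_{i}$ varies more slowly than $\mu_{i}$ near the borders. Decomposing $\psi_{i}=\psi_{i,cen}+\psi_{i,err}^{(\ell)}+\psi_{i,err}^{(r)}$ and evaluating on $\{x_{1}=L_{i}(t)\}$: the central and right-error parts have sources separated from this line by at least $\tfrac{9}{10}D_{0}$ (by \eqref{eq:Lpm-def} and $\bar p_{i-1}>\bar p_{i+1}+2D_{0}$), so \eqref{eq_alpha} with $\alpha=1$ and Lemma \ref{lem:omg-err-small} bound their contribution to $-\psi_{i}$ by $CD_{0}^{-2}x_{2}$; the left-error part contributes at most $\nrm{u_{i,err}^{(\ell)}}_{L^{\infty}}x_{2}\le C\dlt^{3/8}x_{2}$ by Remark \ref{rmk:stream-Hardy-bound}, \eqref{eq:vel-L-infty-alpha} with $\alpha=3/4$, and Lemma \ref{lem:omg-err-small}. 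Since $\dot L_{i}-u_{1}\ge0$ on this line, we get the pointwise-in-time comparison
\begin{equation*}
	\mathbf{F}_{i}[-\psi_{i}\omg](t)\le C\dlt^{3/8}\,\mathbf{F}_{i}[x_{2}\omg](t).
\end{equation*}
Integrating \eqref{eq:mui-dt-sum} yields $\int_{0}^{T}\mathbf{F}_{i}[x_{2}\omg]\,dt=\mu_{\le i}(0)-\mu_{\le i}(T)+\int_{0}^{T}\!\int(u_{\ge i+1})_{2}\,\omg_{\le i}\,dx\,dt$, where the last integral is $\le C\dlt$ by the proof of Proposition \ref{prop:impulse-upper-bound} and $\mu_{\le i}(0)-\mu_{\le i}(T)\le C\dlt$ by Lemma \ref{lem:ini-close} and \eqref{eq:params-dlt-close}; hence $\int_{0}^{T}\mathbf{F}_{i}[-\psi_{i}\omg]\,dt\le C\dlt^{11/8}$.

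For the interaction term, write $\sum_{j\ne i}\int-\nb\psi_{i}\cdot(u_{j}\omg_{i})\,dx=\int-\nb\psi_{i}\cdot\big((u_{\le i-1}+u_{\ge i+1})\,\omg_{i}\big)\,dx$ and split $\omg_{i}$, $\psi_{i}$ via the Eulerian decomposition $(\omg_{i,cen},\omg_{i,err}^{(\ell)},\omg_{i,err}^{(r)})$ and the neighbouring velocities via the Lagrangian decomposition into a ``gain'' part and a ``rest'' part as in Lemma \ref{lem:gain-bounds}. In every combination where the relevant supports are separated --- all interactions with $\omg_{i,cen}$, all interactions mediated by $\nb\psi_{i,cen}$, and all interactions of a ``rest'' velocity with an error part (after separating an initial window $[0,T_{\dlt}]$ exactly as for $I_{*,3}$ in the proof of Proposition \ref{prop:impulse-upper-bound}) --- the velocity kernel decays quadratically in a separation that grows at least linearly in $t$, so by Lemma \ref{lem:vel-decay} the time integral is $\lesssim D_{0}^{-1}+T_{\dlt}\dlt^{c}$, which is $\le\dlt/400$ upon taking $D_{0}$ large and $T_{\dlt}=\dlt^{c'}$.

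The main obstacle --- the only step requiring genuine work --- is the residual ``gain--error'' interaction, namely $\int_{0}^{T}\!\int-\nb\psi_{i,err}^{(\ell)}\cdot\big(u_{\ge i+1,gain}\,\omg_{i,err}^{(\ell)}\big)\,dx\,dt$ together with its mirror at the right border, where the error vorticity and the gained vorticity both touch the moving border $L_{i}(t)$ and are not separated, so no decay is available from support separation alone. I would expand $u_{\ge i+1,gain}(t,\cdot)=\int\mathbf{K}(\cdot,y)\,\omg_{\ge i+1,gain}(t,y)\,dy$, change variables $y=\Phi(t,z)$ to parametrize the gained vorticity by its initial position $z$ and crossing time $\tau(z)$, and apply Fubini --- exactly the strategy of Lemma \ref{lem:key-Lagrangian}. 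For each fixed gained particle, Lemma \ref{lem:u-border} gives $L_{i}(t)-\Phi_{1}(t,z)\gtrsim(\bar V_{i}-\bar V_{i+1})(t-\tau(z))$, so it recedes linearly from $\supp\omg_{i,err}^{(\ell)}\subset\{L_{i}(t)\le x_{1}<L_{i}^{+}(t)\}$; combining the kernel bound \eqref{eq:K-decay} with $\nrm{u_{i,err}^{(\ell)}}_{L^{\infty}}\le C\dlt^{3/8}$ (Lemma \ref{lem:omg-err-small}, \eqref{eq:vel-L-infty-alpha}) and $\nrm{\omg_{i,err}^{(\ell)}}_{L^{1}}\le C\dlt^{1/2}$ (Lemma \ref{lem:omg-err-small} and conservation of $|\supp\omg_{0}|$) yields, for that particle, both a uniform bound and a bound decaying in $t-\tau(z)$ on its contribution. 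Integrating in $t$ gives a small quantity per unit of circulation, and since the total $L^{1}$ mass of the gained vorticity is $\lesssim\dlt^{1/2}$ by Lemma \ref{lem:gain-bounds}, the whole gain--error contribution is $\lesssim\dlt^{5/4}\ll\dlt/400$. The underlying mechanism --- linear separation destroying the interaction after integration in time --- is the same one used in Lemmas \ref{lem:vel-decay} and \ref{lem:key-Lagrangian}; I expect the only delicate points to be bookkeeping of the various gain/rest/error/central combinations and making the exponents close.
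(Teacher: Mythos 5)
Your handling of the flux term is essentially the paper's argument: both reduce $\mathbf{F}_{i}[-\psi_{i}\omg]$ to a small multiple of $\mathbf{F}_{i}[x_{2}\omg]$, whose time integral is $O(\dlt)$ by \eqref{eq:mui-dt-sum}, \eqref{eq:params-dlt-close} and the impulse estimates; the paper obtains the small factor ($C\varep^{1/2}$) from the Lamb-dipole approximation as in Lemma \ref{lem:u-border}, while you obtain $C(\dlt^{3/8}+D_{0}^{-2})$ from the Eulerian error decomposition, and either works. The one genuinely different route is $i=N$: the paper first proves the two-sided bound \eqref{eq:E-N} for $i\le N-1$ and then recovers $E_{N}$ from conservation of the total energy together with Lemma \ref{lem:E-inter-small} and the initial closeness \eqref{eq:ini-smallness}, whereas you run the differential identity \eqref{eq:Ei-dt} directly for $i=N$ (using $\mathbf{F}_{N}\equiv 0$ and discarding the nonnegative influx $\mathbf{F}_{N-1}[-\psi_{N}\omg]$). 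Your route is admissible but forces you to carry out the interaction estimate at the right border of the leftmost piece as well; the paper's detour through energy conservation avoids that step entirely and costs nothing, since \eqref{eq:E-N} is needed anyway.

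The concrete gap is in your gain--error estimate. After Fubini you fix a gained particle $y=\Phi(t,z)$ and claim a per-particle bound that decays like $(t-\tau(z))^{-2}$ from \eqref{eq:K-decay}. But in your term the gained particle is the \emph{source} of the velocity and the error vorticity sits at the field point $x$, so \eqref{eq:K-decay} gives $|\bfK(x,y)|\le Cy_{2}/(|x-y||x-\bar y|)$ with $y_{2}=\Phi_{2}(t,z)$ in the numerator --- the height of the gained particle, which is not uniformly bounded in time (only a $(t\log t)^{1/3}$ growth bound is known, and $|\supp \omg_{0}|\le M$ controls measure, not diameter). Your per-particle time integral therefore carries an uncontrolled factor $\Phi_{2}(t,z)$ and the claimed $\lesssim\dlt^{5/4}$ does not follow. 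This is precisely the asymmetry that makes Lemma \ref{lem:key-Lagrangian} work and your variant not: there the error part is the source, so the decay bound \eqref{eq:F2} is paid for by $\nrm{\omg_{1,err}^{(\ell)}}_{L^1_*}\le C\dlt$ and no gain height appears. Treating $-\nb\psi_{i,err}^{(\ell)}$ as a scalar $O(\dlt^{3/8})$ factor throws away the structure needed to repair this: one must work componentwise, noting that $-\rd_{2}\psi_{i}\,(u_{j})_{2}$ pairs the small factor $|\rd_{2}\psi_{i}|=|(u_{i})_{1}|$ with the kernel of $(u_{j})_{2}$ in \eqref{eq_u2_temp}, which is symmetric in $x_{2},y_{2}$ and so admits an impulse-controlled decay bound, while $-\rd_{1}\psi_{i}\,(u_{j})_{1}$ must exploit that $\rd_{1}\psi_{i}=(u_{i})_{2}$ itself vanishes linearly in $x_{2}$ so as to recombine with the $y_{2}$ of the $(u_{j})_{1}$-kernel into a symmetric weight. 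This is the content hiding behind the paper's one-sentence remark that ``$-\nb\psi_{i}$ gives an additional smallness,'' and it is the step your exponent bookkeeping does not yet close.
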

	\begin{proof}
		We recall the formula \eqref{eq:Ei-dt}: \begin{equation*}
			\begin{split}
				\dot E_{i}(t) = \sum_{j \ne i}  \int  -\nb\psi_{i}(t,x) \cdot (u_{j}\omg_{i})(t,x) \,  dx + \mathbf{F}_{i-1}[-\psi_{i}\omg](t) - \mathbf{F}_{i}[-\psi_{i}\omg](t). 
			\end{split}
		\end{equation*}  
		
		We first take $1 \le i \le N-1$, and let us prove a stronger statement, namely \begin{equation}\label{eq:E-N}
			\begin{split}
				\sup_{t \in [0,T]} \left| E_{i}(t) - E_{i}(0) \right| \le \frac{\dlt}{1000N}, \qquad 1 \le i \le N-1. 
			\end{split}
		\end{equation} Note that from \eqref{eq:params-dlt-close} and \eqref{eq:impulse-upper-bound}, we actually have for all $1 \le i \le N-1$ that \begin{equation*}
			\begin{split}
				0 \le \int_0^{T} \bfF_{i}[x_{2}\omg](t) dt \le C\dlt. 
			\end{split}
		\end{equation*} Now the key observation is that \begin{equation*}
			\begin{split}
				\left| \mathbf{F}_{i}[-\psi_{i}\omg](t)\right| \le \sup_{ x_{1} = L_{i}(t) }\left| \frac{\psi_{i}(t,x)}{x_{2}} \right| \bfF_{i}[x_{2}\omg](t) \le C\varep^{1/2} \bfF_{i}[x_{2}\omg](t) ,
			\end{split}
		\end{equation*} where the smallness estimate on $\frac{\psi_{i}(t,x)}{x_{2}}$ follows along the same lines as the proof of Lemma \ref{lem:u-border}. In particular, by taking $\varep_{0}$ smaller, we can guarantee that \begin{equation*}
			\begin{split}
				\int_0^T 	\left| \mathbf{F}_{i}[-\psi_{i}\omg](t)\right| dt \le \frac{\dlt}{10000N}, \qquad 1 \le i \le N-1.
			\end{split}
		\end{equation*} Therefore, it suffices to obtain a similar bound for \begin{equation}\label{eq:E-ij}
			\begin{split}
				E_{i,j}(T) := \int_{0}^{T} \int  -\nb\psi_{i}(t,x) \cdot (u_{j}\omg_{i})(t,x) \,  dx dt , \qquad i \ne j, 1 \le i \le N-1. 
			\end{split}
		\end{equation} Bounding this in absolute value by $o(\dlt)$ is somewhat simpler than the proof of Proposition \ref{prop:impulse-upper-bound}, the factor $-\nb\psi_{i}$ giving an additional smallness compared to the corresponding integral for the impulse change. {We provide some details of the proof as a separate proposition; see Proposition \ref{prop:E-ij} below.}
		
		Finally, when $i = N$, we use \eqref{eq:E-N}, conservation of energy $E[\omg(t,\cdot)] = E[\omg_{0}]$, together with \eqref{eq:E-inter-small} and initial closeness \eqref{eq:ini-smallness} to obtain \eqref{eq:energy-upper-bound} for $i = N$, by taking $C_{0}$ larger if necessary ($\dlt_{0} = \dlt/C_{0}$). This finishes the proof. 
	\end{proof}

	\begin{proposition}\label{prop:E-ij} 
		For each $i, j$ satisfying $i \ne j, 1 \le i \le N-1$, we have \begin{equation}\label{eq:E-ij-bound}
			\begin{split}
				E_{i,j}(T) \le C\left(D_{0}^{-1} + \dlt^{9/8}\right). 
			\end{split}
		\end{equation}
	\end{proposition}
	\begin{proof}
		We only consider the case $i = 1, j = 2$, since the other cases are similar. To ease notation, we also denote $u_{j} = v$, and denote its two components by $v_1$ and $v_2$. Then we note that $E_{1,2}(T)$ consists of two terms: \begin{equation*}
			\begin{split}
				E_{1,2}(T) = -\int_{0}^{T} \int  \rd_{1}\psi_{1}(t,x)   (v_{1}\omg_{1})(t,x) \,  dx dt  - \int_{0}^{T} \int  \rd_{2}\psi_{1}(t,x)   (v_{2}\omg_{1})(t,x) \,  dx dt  . 
			\end{split}
		\end{equation*} We further observe that the second term is bounded in absolute value by \begin{equation*}
			\begin{split}
				\nrm{ \rd_{2}\psi_{1} }_{L^{\infty}([0,T]\times\bbR^2_+)}\int_{0}^{T} \int   (v_{2}\omg_{1})(t,x) \,  dx dt 
			\end{split}
		\end{equation*} since $v_{2}\omg_{1} \ge 0$ pointwise in time and space. Since $\nrm{ \rd_{2}\psi_{1} }_{L^{\infty}([0,T]\times\bbR^2_+)}  \le C$ for some $C$ independent of $T$ and the integral is bounded by $I(t)$ defined in \eqref{eq:def-I}, we conclude  that \begin{equation*}
			\begin{split}
				\left|  - \int_{0}^{T} \int  \rd_{2}\psi_{1}(t,x)   (v_{2}\omg_{1})(t,x) \,  dx dt  \right| \lesssim \dlt^{9/8}. 
			\end{split}
		\end{equation*} To handle the other term, which we denote by $II$ from now on, we first write out $v_{1}(t,x)$ as an integral against $\omg_{2}$ using \eqref{eq_u1}: \begin{equation*}
			\begin{split}
				II = -\int_{0}^{T} \int_{ \{ y_{1} \le L_{1}(t) \} }  \int_{ \{ x_{1} > L_{1}(t) \} } \frac{({(x_1-y_1)^2 + }(x_2^2-y_2^2)) y_2\, \omg_{2}(t,y) }{\pi |x-y|^2 |x-\bar{y}|^2} \rd_{1}\psi_{1}(t,x)    \omg_{1}(t,x) \,  dx dy  dt .
			\end{split}
		\end{equation*} Taking absolute values and using $|x_2^2-y_2^2| \le |x-y||x-\bar{y}|$, we have the bound \begin{equation}
			\begin{split}\label{II_estimate}
				|II| \le \int_{0}^{T} \int_{ \{ y_{1} \le L_{1}(t) \} } \int_{ \{ x_{1} > L_{1}(t) \} } \frac{y_{2}\omg_{2}(t,y) }{\pi |x-y| |x-\bar{y}|} |\rd_{1}\psi_{1}(t,x)|    \omg_{1}(t,x) \,  dx dy  dt .
			\end{split}
		\end{equation} 
		
		To bound the integral on the right hand side, we split its integral domains of $y, x$ into sub-domains $\{y_1 \leq L_1^-(t)\} \cup \{y_1 \in [ L_1^-(t), L_1(t)]\}$ and $ \{x_1 \in ( L(t), \tilde{L}_1^+(t)]\} \cup \{x_1 \geq \tilde{L}_1^+(t)\}$ respectively. Here $\tilde{L}_1^+ := \frac{1}{2}(L_1(t)+L_1^+(t))$, and the reason to introduce it will be made clear soon. 
		
		With such split, the right hand side of \eqref{II_estimate} can be split into a sum of four integrals. Three of them have the integration domains of $x$, $y$ separated from each other by a distance of order $D_0 + t$. These three integrals can be handled similarly as in \eqref{eq:I-star} in the proof of Proposition \ref{prop:impulse-upper-bound}, using the uniform-in-time boundedness of $\nrm{y_{2}\omg_{2}(t,y) }_{L^1}$, $\nrm{\rd_{1}\psi_{1}(t,x)}_{L^\infty}$, $\nrm{ \omg_{1}(t,x)}_{L^1}$ and decay of $|x-y| |x-\bar{y}|$. For instance, for the integral in $\{x_{1} > L_{1}^{+}(t), \, y_{1} \in [L_1^-(t), L_{1}(t)]\}$,  we have $|x-\bar{y}| \ge |x-y| \gtrsim D_{0} + t$, which gives the bound \begin{equation*}
			\begin{split}
				&\int_{0}^{T} \int_{ \{ y_{1} \in [L_1^-(t), L_{1}(t)] \} } \int_{ \{ x_{1} > \tilde{L}_{1}^{+}(t) \} } \frac{y_{2}\omg_{2}(t,y) }{\pi |x-y| |x-\bar{y}|} |\rd_{1}\psi_{1}(t,x)|    \omg_{1}(t,x) \,  dx dy  dt \\
				&\qquad \le C\nrm{y_{2}\omg_{2}(t,y) }_{L^1} \nrm{\rd_{1}\psi_{1}(t,x)}_{L^\infty} \nrm{ \omg_{1}(t,x)}_{L^1} \int_{0}^{T} \frac{1}{(D_{0}+t)^{2}} dt \le C D_{0}^{-1}. 
			\end{split}
		\end{equation*} 
		However, the remaining integral in the region $\{y_1 \in [L_1^-(t), L_1(t)], \,x_1\in (L_1(t), \tilde{L}_1^+(t)]\}$ is more delicate to control, since $x_1-y_1$ can be arbitrarily small. To treat this integral \begin{equation*}
			\begin{split}
				II_{err}:=\int_{0}^{T} \int_{ \{ L_{1}^{-}(t) <  y_{1} \le L_{1}(t) \} } \int_{ \{ L_{1}(t) < x_{1} \le \tilde{L}_{1}^{+}(t) \} } \frac{y_{2}\omg_{2}(t,y) }{\pi |x-y| |x-\bar{y}|} |\rd_{1}\psi_{1}(t,x)|    \omg_{1}(t,x) \,  dx dy  dt, 
			\end{split}
		\end{equation*} we first prove that \begin{equation*}
			\begin{split}
				|\rd_{1}\psi_{1}(t,x)| \leq |u_1(t,x)| \le C(\dlt^{7/16} + D_0^{-1}) \quad\text{ for all }t\in [0,T], x\in [L_1(t), \tilde{L}_1^+(t)].
			\end{split}
		\end{equation*} 
		This is done by decomposing $u_1$ into the contribution from $\omega_{1,err}^{(l)}$ and $\omega_{1,cen}$. The estimate \eqref{eq:omg-err-small} on $\omega_{1,err}^{(l)}$ allows us to apply \eqref{eq:vel-L-infty-alpha} to bound its contribution to $u_1$ by  $C\dlt^{7/16}$ (the power can be made arbitrarily close to $1/2$, but $7/16$ is sufficient for us).  The contribution from $\omega_{1,cen}$ can be easily controlled by $C D_0^{-1}$, since there is an order $D_0$ distance between $\tilde{L}_1^+$ and $L_1^+$; this is the motivation for introducing $\tilde{L}_1^+$.
		
		Next, we follow a similar idea as in the bound of the ``gain--error interaction'' Lemma~\ref{lem:gain-error} to rewrite $II_{err}$ using a Lagrangian viewpoint. However, we now follow the trajectory of the vorticity in $\{x_1 \in (L_1(t), \tilde L_1^+(t)]\}$, instead of trajectory of ``gain particles'' in $y_1 \in [L_1^-(t), L_1(t)]$.\footnote{If we try to control $II_{err}$ in the same way as Lemma~\ref{lem:gain-error},  the spatial integral will only decay like $1/t$, which is not integrable in time if $T\gg 1$.} Namely, through the change of variables $x = \Phi(t,z)$, we rewrite $II_{err}$ as \begin{equation*}
			\begin{split}
				II_{err} & \le C(\dlt^{7/16} + D_0^{-1})\int_{0}^{T} \int_{ \{ L_{1}(t) < \Phi_{1}(t,z) \le \tilde{L}_{1}^{+}(t) \} } \underbrace{\int_{ \{ L_{1}^{-}(t) <  y_{1} \le L_{1}(t) \} }  \frac{y_{2}\omg_{2}(t,y) }{|\Phi(t,z)-y| |\Phi(t,z)-\bar{y}|}   \,  dy}_{=: G(t,z)} \omg_{0}(z)  dz  dt.
			\end{split}
		\end{equation*} 
		We then define
		\begin{equation*}
			\begin{split}
				A(T) := \left\{ z \in \supp(\omg_{0}) : \, \mbox{there exists } \tau \in [0,T] \mbox{ such that } \Phi_{1}(\tau,z) \in [L_{1}(\tau), \tilde{L}_{1}^{+}(\tau)]  \right\},
			\end{split}
		\end{equation*} and for $z \in A(T)$, we define two time moments $\tau_{in}$ and $\tau_{out}$ by \begin{equation*}
			\begin{split}
				\tau_{in}(z) := \inf \left\{ \tau \in [0,T] :   \Phi_{1}(\tau,z) \in [L_{1}(\tau), \tilde{L}_{1}^{+}(\tau)] \right\}, \quad \tau_{out}(z) := \sup \left\{ \tau \in [0,T]  : \Phi_{1}(\tau,z) \in [L_{1}(\tau), \tilde{L}_{1}^{+}(\tau)] \right\}.
			\end{split}
		\end{equation*} 
		Note that for $z\in A(T)$, we have $ \Phi_1(t,z) \in [L_1(t), \tilde L_1^+(t)]$ in the whole time interval $[\tau_{in}(z), \tau_{out}(z)]$: this is because fluid particles can only cross the borders from right to left, due to Lemma~\ref{lem:u-border}.
		
		With the above definition, we have
		\[
		II_{err} \leq  C(\dlt^{7/16} + D_0^{-1}) \int_{ \bbR_+^2 } \left[ \int_{\tau_{in}(z)}^{  \tau_{out}(z) } G(t,z)  dt \right]  \omg_{0}(z)\mathbf{1}_{A(T)}(z)    dz .
		\]
		To control this, on the one hand, we have the uniform-in-time bound for $G(t,z)$: for any $0<r<R$, \begin{equation*}
			\begin{split}
				G(t,z) &\le \int_{ \mathbb{R}^2_+ }  \frac{ \omg_{2,err}^{(r)}(t,y) }{|\Phi(t,z)-y|  } \, dy \lesssim \|\omg_{2,err}^{(r)}\|_{L^1} r + \|\omg_{2,err}^{(r)}\|_{L^2} (\log\frac{R}{r})^{1/2} + \|\omg_{2,err}^{(r)}\|_{L^\infty} R^{-1} \\
				&\leq  C\dlt^{1/2}|\log\delta|^{1/2} \leq C\dlt^{7/16},
			\end{split}
		\end{equation*}
		where the third inequality is obtained by setting $r=1$, $R=\delta^{-1/2}$, and applying the bounds $\|\omg_{2,err}^{(r)}\|_{L^2}<\delta^{1/2}$ and $\|\omg_{2,err}^{(r)}\|_{L^1}\leq \delta^{1/2}$ (the latter one follows from applying Cauchy--Schwarz and using the boundedness of support).
		
		On the other hand, for any $z\in A(T)$ and $t\in [\tau_{in}(z), \tau_{out}(z)]$, we have $\Phi_1(t,z) - L_1(t) \gtrsim \tau_{out}(z)-t$ from the linear-in-time separation by Lemma~\ref{lem:u-border}\footnote{Lemma~\ref{lem:u-border} says that any Lagrangian particle in the region $[L_1(t),L_1^+(t)]$ has velocity $O(\varepsilon^{1/2})$. Since $L_1(t)$ moves to the right with constant speed, the horizontal distance between the particle to the vertical line $\{x_1=L_1(t)\}$ has to decrease linearly in time, until the moment $\tau_{out}$ when it hits the line. As a result, if we go backwards in time from time $t_{out}(z)$, the particle needs to separate from $\{x_1=L_1(t)\}$ with linear separation.}. As a result, for any $y\in [L_1^-(t), L_1(t)]$, we have
		$|\Phi(t,z)-y| > |\Phi_1(t,z) - L_1(t) |  \gtrsim \tau_{out}(z)-t$. Combining this with Lemma~\ref{lem:omg-err-small}, we have the following time-dependent bound for all $z\in A(T), t\in [\tau_{in}(z), \tau_{out}(z)]$: \begin{equation*}
			\begin{split}
				G(t,z) &\le C \int_{ \{ L_{1}^{-}(t) <  y_{1} \le L_{1}(t) \} }  \frac{y_{2}\omg_{2}(t,y) }{ |\tau_{out}(z) - t |^{2} } \, dy \le \frac{C\|\omega_{2,err}^{(r)}\|_{L^1_*}}{|\tau_{out}(z) - t |^{2}} \leq  \frac{C\dlt}{|\tau_{out}(z) - t |^{2}} .
			\end{split}
		\end{equation*} 
		%which follows from the observation that at exactly $t^* = \tau_{out}(z)$, we have $|\Phi(t^*,z)-\bar{y}| \ge |\Phi(t^*,z)-y| \ge L_{1}(t^*)-y \ge 0$ and then for all $t < t^*$ we have $|\Phi(t,z)-\bar{y}| \ge |\Phi(t,z)-y| \gtrsim t^* - t $ from linear-in-time separation. 
		These two bounds allow us to estimate\begin{equation*}
			\begin{split}
				\int_{\tau_{in}(z)}^{  \tau_{out}(z) } G(t,z)  dt \le C \int_{\tau_{in}(z) - \tau_{out}(z)}^{ 0 } \min\left\{ \dlt^{7/16}, \frac{\dlt}{t^2} \right\} \, dt \le C \dlt^{23/32}. 
			\end{split}
		\end{equation*} This gives \begin{equation*}
			\begin{split}
				II_{err} \le C( \delta^{7/16}+ D_0^{-1}) \dlt^{23/32 } \int_{ \bbR_+^2 }  \omg_{0}(z) \mathbf{1}_{A(T)}(z)    dz \le C \dlt^{  7/16 + 23/32 } + D_0^{-1}. 
			\end{split}
		\end{equation*} 
		This finishes the proof, since $7/16 + 23/32 > 9/8$.
	\end{proof} 
	
	\begin{proof}[Proof of Theorem \ref{thm:N-Lamb-dipoles}]
		Since \eqref{eq:B1}--\eqref{eq:B3} hold on $[0,T]$ with $\dlt$ replaced by $\dlt/100$, by continuity in time, this concludes that \eqref{eq:B1}--\eqref{eq:B3} hold with $\dlt/100$ for all $t \in [0,\infty)$. In particular, Lemma \ref{lem:approx-Lamb-dipoles} is applicable for all nonnegative times, which establishes Theorem \ref{thm:N-Lamb-dipoles}. The proof is complete.
	\end{proof} 
	
	\section{Separating out a Lamb dipole}\label{sec:B} 
	
	In this section, we sketch the proof of Theorem \ref{thm:B}, which is largely parallel to the one of Theorem \ref{thm:N-Lamb-dipoles}. For convenience, let us recall the notation and assumptions from the statement. The initial data $\omg_{0} \ge 0$ on $\bbR^2_+$ is decomposed into $\omg_{0} = \omg_{0 l} + \omg_{0 r}$: \begin{itemize}
		\item $\omega_{0r}$ satisfies $\nrm{ \omg_{0 r} - \bar\omega_{Lamb}(\cdot-D\mathbf{e}_1)}_{L^2 \cap L^1_*  } \leq \delta_0$ and $\|\omega_{0r}\|_{L^1\cap L^\infty} \le M$,
		\item $\omega_{0l}$ satisfies $\supp(\omega_{0l}) \subset\{x_1 < x_2 \cot\alpha\}$ and $V_{max}[\omega_{0l}] < \sin\alpha$.
	\end{itemize}
	We have defined  \begin{equation*}
		\begin{split}
			V_{max}[\omg_{0 l}] = \sup_{g \in \calR[\omg_{0 l}]} \nrm{ \nb^\perp\lap^{-1}(g) }_{L^{\infty}}, \quad V_{avr} = \frac{1}{2} \left(1+\frac{ V_{max}[\omega_{0l}]  }{\sin\alpha} \right)<1, 
		\end{split}
	\end{equation*} where $\calR[\omg_{0 l}]$ is the set of all possible rearrangements of $\omg_{0 l}$ \eqref{eq:rearrange}.
	
	\subsection{Decomposition and bootstrap assumptions}
	
	Let $\omg(t,\cdot)$ be the solution corresponding to $\omg_{0}$, and let us decompose \begin{equation}\label{eq:decompose-B}
		\begin{split}
			\omg_{r}(t,\cdot) := \omg(t,\cdot)\mathbf{1}_{\{x_1 > L(t,x_{2}) \}} , \quad L(t,x_{2}) := x_2 \cot\alpha + V_{avr} t ,  \quad \omg_{l}(t,\cdot) := \omg(t,\cdot) - \omg_{r}(t,\cdot) 
		\end{split}
	\end{equation} and set \begin{equation*}
		\begin{split}
			K_{r}(t) = \kpp_{r}^2(t) = \nrm{ \omg_{r}(t,\cdot) }_{L^{2}}^{2}, \quad \mu_{r}(t) = \nrm{ \omg_{r}(t,\cdot) }_{L^1_* }, \quad E_{r}(t) = E[\omg_{r}(t,\cdot)]. 
		\end{split}
	\end{equation*} Similarly, we define $K_{l}, \kpp_{l}, \mu_{l}$, and $E_{l}$.  	We now introduce the bootstrap assumptions: for a sufficiently small $\dlt > 0$ depending only on $\varep, \alp, M$, we take 
	\begin{itemize}
		\item (B1') $\sup_{ \{ x_{1} = L(t,x_{2}) \} }( u_{1}(t,x) - \cot \alp \, u_{2}(t,x) )  \le V_{avr}$. 
		
		\item (B2') $\mu_{r}(t) \le \mu_{r}(0) + \dlt$.

		\item (B3') $E_{r}(t) \ge E_{r}(0) - \dlt$.
		
		%\item (B4) $E_{l}(t) \ge E_{l}(0) - \dlt$. 
	\end{itemize}

	Note that (B2') is exactly identical to \eqref{eq:B2} for the $N=2$ case. (B3') is identical to \eqref{eq:B3} (for $i=1$ only). Here we do not need to control $E_l(t)$, since the bootstrap assumption \eqref{eq:B3} for $i=N$ is only needed to show that $\omega_N$ stays close to a Lamb dipole; whereas in Theorem~\ref{thm:B} we do not need to prove anything on $\omega_l$.

	The condition (B1') says that the fluid particles can cross $\{x_{1} = L(t,x_{2})\}$ only from right to left, and compared to the case of Theorem \ref{thm:N-Lamb-dipoles}, this is the only difference in the set of bootstrap assumptions. We shall see that (B1') implies the enstrophy bound $K_{r}(t) \le K_{r}(0)$ and that (B1') is propagated in time by the assumption on $\omg_{0 l}$ involving the rearrangements. Given a sufficiently small $\dlt>0$, we are going to choose $\dlt_{0} = \dlt/C_{0}$ with some large $C_{0}  = C_{0}(\varep,\alp,M)>0$, and prove that the bootstrap assumptions propagate for all $t \ge 0$. 
	
	\subsection{Consequences of the bootstrap assumptions}
	
	We assume that the bootstrap assumptions hold for a time interval $[0,T]$ and obtain their consequences, during the same time interval.
	
	\subsubsection{Upper bound on enstrophy}
	To begin with, we compute the enstrophy change: \begin{equation*}
		\begin{split}
			\dot{K}_{r}(t) & = \frac{d}{dt} \int_{ \{x_1 > L(t,x_{2})\}} \omg^{2}(t,x) dx = \int_{0}^{\infty} \left(\left( u_{1} - \cot\alp u_{2} - V_{avr} \right)\omg^{2}\right)(t, L(t,x_{2}) , x_{2} )  \, dx_{2}
		\end{split}
	\end{equation*}
	and this gives $K_{r}(t) \le K_{r}(0)$ from (B1'). 
	
	\subsubsection{Approximating by Lamb dipole}  Using  $K_{r}(t) \le K_{r}(0)$ with (B2') and (B3'), we may apply Proposition \ref{prop:Lamb-energy-estimate} to the case of normalized Lamb dipole: \begin{equation*}
		\begin{split}
			\inf_{\tau\in\bbR} \nrm{ \omg_{r}(t,\cdot) - \bar{\omg}_{Lamb}(\cdot - \tau \mathbf{e}_{1}) }_{L^2 \cap L^1_* } < \varep .
		\end{split}
	\end{equation*} As in \S \ref{subsec:approx-Lamb-dipoles}, we can show that the shift $\tau(t)$ can be chosen to satisfy  \begin{equation*}
		\begin{split}
			|\tau(t) - t - D| \le C\varep^{1/2}(1+t). 
		\end{split}
	\end{equation*} For convenience, we take $\omg_{r, Lamb}(t,\cdot) =  \bar{\omg}_{Lamb}(\cdot - \tau(t) \mathbf{e}_{1})$ and define $\omg_{r,rem}(t,\cdot) = \omg_{r}(t,\cdot) - \omg_{r,Lamb}(t,\cdot)$.  Furthermore, by combining the upper bounds on $K_{r}(t)$, $\mu_{r}(t)$ with \begin{equation}\label{eq:E-r-UB}
		\begin{split}
			C_{L}\mu_{r}(0)\kpp_{r}(0) - C\dlt < E_{r}(t) \le C_{L}\mu_{r}(t)\kpp_{r}(t) , 
		\end{split}
	\end{equation} we deduce lower bounds \begin{equation}\label{eq:mu-K-r-LB}
		\begin{split}
			\mu_{r}(t) > \mu_{r}(0) - C\dlt , \qquad K_{r}(t) > K_{r}(0) - C\dlt . 
		\end{split}
	\end{equation}

	\subsubsection{Smallness of the gain and error} As in \eqref{eq:omg-gain}--\eqref{eq:omg-gain-def}, we define $\omg_{l, gain}(t,\cdot)$ to be the part of $\omg_{l}(t,\cdot)$ which came from $\omg_{r}$ during the time interval $[0,t]$: \begin{equation}\label{eq:omg-l-gain}
		\begin{split}
			\omg_{l,gain}(t,\cdot) := \omg_{l}(t,\cdot) \mathbf{1}_{ \{\Phi^{-1}_{1}(t,x) \ge L(0, \Phi^{-1}_{2}(t,x) )\} }(\cdot) , \qquad \omg_{l, *}(t,\cdot) := \omg_{l}(t,\cdot) - \omg_{l, gain}(t,\cdot). 
		\end{split}
	\end{equation} Here, $\Phi^{-1}(t,x)=(\Phi^{-1}_1(t,x),\Phi^{-1}_2(t,x))$ is the inverse of $\Phi$, where $\Phi$ is the flow map associated with $u(t,x)$; $\Phi(t,\Phi^{-1}(t,x)) = x = \Phi^{-1}(t,\Phi(t,x))$. 
	
	\begin{lemma}\label{lem:gain-l-bounds} 
		We have \begin{equation*}
			\begin{split}
				\nrm{ \omg_{l, gain}(t,\cdot) }_{L^{2}}^{2} \le C \dlt, \qquad \nrm{ \omg_{l, gain}(t,\cdot) }_{L^{1}} \le C \dlt^{1/2}.
			\end{split}
		\end{equation*} 
	\end{lemma}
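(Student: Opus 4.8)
The plan is to follow the proof of Lemma~\ref{lem:gain-bounds} almost verbatim, with the bootstrap assumption (B1') playing the role that Lemma~\ref{lem:u-border} (equivalently, (B1)) played in the multi-dipole case. The structural input is that (B1') forces fluid particles to cross the moving curve $\{x_1 = L(t,x_2)\}$ only from right to left, so that $\omg_r(t,\cdot)$ can only lose mass, never gain it. Consequently, the vorticity lying to the right of the curve at time $t$ consists precisely of those fluid particles that started in $\supp\omg_r(0,\cdot) \subseteq \{x_1 \ge x_2\cot\alp\}$ and have not yet crossed, while $\omg_{l,gain}(t,\cdot)$, as defined in \eqref{eq:omg-l-gain}, consists exactly of the fluid particles that started to the right of $\{x_1 = x_2\cot\alp\} = \{x_1 = L(0,x_2)\}$ and have crossed the curve by time $t$.

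First I would record that, since $\omg$ is transported by the area-preserving flow $\Phi$ and the $L^2$ norm is conserved along trajectories, partitioning the fluid particles initially to the right of $\{x_1 = x_2\cot\alp\}$ into ``crossed by time $t$'' and ``not yet crossed'' gives the identity $K_r(0) = K_r(t) + \nrm{\omg_{l,gain}(t,\cdot)}_{L^2}^2$. Combining this with the lower bound $K_r(t) > K_r(0) - C\dlt$ from \eqref{eq:mu-K-r-LB} immediately yields $\nrm{\omg_{l,gain}(t,\cdot)}_{L^2}^2 \le C\dlt$. For the $L^1$ bound, I would note that every fluid particle contributing to $\omg_{l,gain}$ starts from a point where $\omg_{0l}$ vanishes --- because $\supp(\omg_{0l}) \subset \{x_1 < x_2\cot\alp\}$ --- hence from a point where $\omg_0 = \omg_{0r}$; so $\supp\omg_{l,gain}(t,\cdot) \subseteq \Phi(t,\supp\omg_{0r})$, which has measure at most $|\supp\omg_{0r}| \le M$ since $\Phi(t,\cdot)$ is measure preserving. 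Cauchy--Schwarz then gives $\nrm{\omg_{l,gain}(t,\cdot)}_{L^1} \le \nrm{\omg_{l,gain}(t,\cdot)}_{L^2} M^{1/2} \le C\dlt^{1/2}$.

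I do not expect a serious obstacle here: the only point needing care is the identification of $\omg_{l,gain}(t,\cdot)$ with ``the part of $\omg_r(0,\cdot)$ that has moved to the left of the moving curve,'' which relies on (B1') ruling out any crossing in the opposite direction --- precisely the analogue of the monotone crossing exploited in Lemma~\ref{lem:gain-bounds}. Note also that the upper bound $K_r(t) \le K_r(0)$ feeding into \eqref{eq:mu-K-r-LB} is itself a consequence of (B1'), so the whole estimate rests only on the propagation of (B1'), which is established separately.
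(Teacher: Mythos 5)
Your proposal is correct and follows essentially the same route as the paper, which simply states that the proof is almost identical to that of Lemma \ref{lem:gain-bounds} using \eqref{eq:mu-K-r-LB}: the one-directional crossing from (B1') gives the $L^2$ identity $K_r(0)=K_r(t)+\nrm{\omg_{l,gain}(t,\cdot)}_{L^2}^2$, which combines with \eqref{eq:mu-K-r-LB}, and the $L^1$ bound follows by Cauchy--Schwarz against the conserved support measure. Your observation that the gain particles originate where $\omg_0=\omg_{0r}$, so that only $|\supp(\omg_{0r})|\le M$ is needed, is a detail the paper leaves implicit and is handled correctly here.
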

	\begin{proof}
		The proof is almost identical to that of Lemma \ref{lem:gain-bounds}, using \eqref{eq:mu-K-r-LB}. 
	\end{proof}

	We then decompose $\omg_{r}$ in a similar way as in the Eulerian decomposition in \S\ref{sec_euler}, into the ``error term'' which is relatively close to the borderline and the rest: \begin{equation}\label{eq:omg-r-err}
		\begin{split}
			\omg_{r,err}(t,\cdot) :=  \omg_{r}(t,\cdot)\mathbf{1}_{\{x_1 \le L^+(t,x_{2}) \}} , \quad L^+(t,x_{2}) := x_2 \cot\alpha + \big(V_{avr} + \frac1{10} \big) t + \frac{D}{2}    
		\end{split}
	\end{equation} and we take $ \omg_{r, cen}(t,\cdot) := \omg_{r}(t,\cdot) - \omg_{r, err}(t,\cdot) $. These two further decomposition of $\omega_l$ and $\omega_r$ are illustrated in Figure~\ref{fig_thmB_decompose}.

	\begin{figure}[htbp]
		\includegraphics[scale=1]{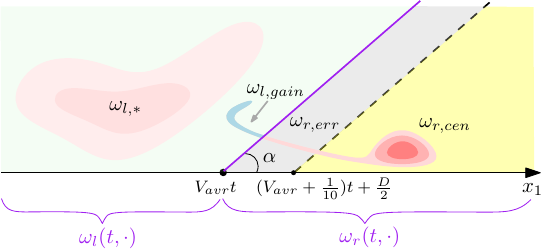} 
		\caption{\label{fig_thmB_decompose} Illustration of the decomposition of $\omega_l$ (vorticity to the left of the slanted purple line) into $\omega_{l,*}$ (red color) and $\omega_{l,gain}$ (blue color), and the decomposition of $\omega_r$ (vorticity to the right of the slanted purple line) into $\omega_{r,err}$ (on the gray background) and $\omega_{r,cen}$ (on the yellow background).} 
	\end{figure}
	
	\begin{lemma}\label{lem:omg-r-err-small}
		We have \begin{equation}\label{eq:omg-r-err-small}
			\begin{split}
				\nrm{  \omg_{r, err}(t,\cdot) }_{L^1_* }  + \nrm{  \omg_{r, err}(t,\cdot) }_{L^{2}}^{2} \le C\dlt.  
			\end{split}
		\end{equation}
	\end{lemma}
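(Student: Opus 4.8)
The plan is to transcribe the proof of Lemma~\ref{lem:omg-err-small} almost verbatim, with $\omg_{r}$, $\omg_{r,err}$, $\omg_{r,cen}$ playing the roles of $\omg_{i}$, $\omg_{i,err}$, $\omg_{i,cen}$, and the shifted normalized Lamb dipole $\omg_{r,Lamb}(t,\cdot)=\bar\omg_{Lamb}(\cdot-\tau(t)\mathbf{e}_{1})$ playing the role of $\bar\omg_{i,Lamb}(t)$. Since $C_{L}\,\sqrt{\pi}c_{L}=1$ and $E[\bar\omg_{Lamb}]=\pi$, every occurrence of $C_{L}\bar\kpp_{i}$ and $C_{L}\bar\kpp_{i}\bar\mu_{i}$ in that proof is simply replaced by $1$ and $\pi$. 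The one genuinely new ingredient is to record, using the shift estimate $|\tau(t)-t-D|\le C\varep^{1/2}(1+t)$ established above together with the choice of $D_{0}$ (taken large relative to $\alp$ so as to absorb the $\cot\alp$ in the definition of $L^{+}$), that the support of $\omg_{r,Lamb}(t,\cdot)$ lies strictly to the right of the slanted line $\{x_{1}=L^{+}(t,x_{2})\}$, with the distance between $\supp\omg_{r,Lamb}(t,\cdot)$ and $\{x_{1}\le L^{+}(t,x_{2})\}$ bounded below by $c\,(D_{0}+t)$ for some $c>0$ (here one uses that $L^{+}(t,\cdot)$ propagates strictly more slowly than the Lamb dipole). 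In particular $\omg_{r,Lamb}$ vanishes on $\supp\omg_{r,err}(t,\cdot)$, so $\omg_{r,err}=\omg_{r,rem}\,\mathbf{1}_{\{x_{1}\le L^{+}(t,x_{2})\}}$ and hence $\nrm{\omg_{r,err}(t,\cdot)}_{L^{2}}\le\nrm{\omg_{r,rem}(t,\cdot)}_{L^{2}}<\varep$.

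Next I would bound $\nb\psi_{r}=\nb\psi_{r,rem}+\nb\psi_{r,Lamb}$ pointwise on $\supp\omg_{r,err}(t,\cdot)$: the remainder part is $O(\varep^{2/3})$ by \eqref{eq:vel-L-infty-alpha} with $\alp=1/2$ (using $\nrm{\omg_{r,rem}}_{L^{2}\cap L^1_*}<\varep$ and $\nrm{\omg_{r,rem}}_{L^{\infty}}\le M+C$, the latter because (B1') forces $\nrm{\omg_{r}(t,\cdot)}_{L^{\infty}}\le\nrm{\omg_{0r}}_{L^{\infty}}\le M$), while the Lamb part is controlled through \eqref{eq:u-decay} by the separation above; for $\varep_{0}$ small and $D_{0}$ large this gives $|\nb\psi_{r}|<\tfrac1{10}$ there. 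The same estimates give $|\nb\psi_{r,err}|<\tfrac1{10}$ on $\supp\omg_{r,err}$, whence $|\nb\psi_{r,cen}|<\tfrac15$ there by subtraction. Since these three stream functions vanish on $\{x_{2}=0\}$ and the corresponding bounds persist on the vertical segments below $\supp\omg_{r,err}$, integrating in $x_{2}$ as in Remark~\ref{rmk:stream-Hardy-bound} yields $|\psi_{r,err}(t,x)|<\tfrac1{10}x_{2}$ and $|\psi_{r,cen}(t,x)|<\tfrac15 x_{2}$ for $x\in\supp\omg_{r,err}(t,\cdot)$.

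Then I would insert these bounds into the analogue of \eqref{eq:Ei-decomp},
\begin{equation*}
E_{r}(t)=-\tfrac12\int\omg_{r,err}\,(\psi_{r,err}+2\psi_{r,cen})\,dx-\tfrac12\int\omg_{r,cen}\,\psi_{r,cen}\,dx,
\end{equation*}
estimating the first integral by $\tfrac14\nrm{\omg_{r,err}(t,\cdot)}_{L^1_*}$ and the second by $C_{L}\nrm{\omg_{r,cen}(t,\cdot)}_{L^1_*}\nrm{\omg_{r,cen}(t,\cdot)}_{L^{2}}$ via the sharp energy inequality \eqref{eq:energy-bound-sharp}. Using additivity of impulse and enstrophy, (B2'), the bound $K_{r}(t)\le K_{r}(0)$, the lower bounds \eqref{eq:mu-K-r-LB}, and the initial closeness $|\mu_{r}(0)-\pi|+|K_{r}(0)-\pi c_{L}^{2}|\le C\dlt_{0}$, this reduces to $E_{r}(t)\le \pi-\tfrac34\nrm{\omg_{r,err}(t,\cdot)}_{L^1_*}+C\dlt$. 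Comparing with (B3') and the lower bound $E_{r}(0)\ge \pi-C\dlt_{0}\ge\pi-C\dlt$ (from \eqref{eq:Energy-difference-X}, since horizontal shifts preserve the energy) gives $\nrm{\omg_{r,err}(t,\cdot)}_{L^1_*}\le C\dlt$. Feeding this back into the decomposition and arguing exactly as in the final lines of the proof of Lemma~\ref{lem:omg-err-small} forces $\nrm{\omg_{r,cen}(t,\cdot)}_{L^{2}}\ge\sqrt{\pi}c_{L}-C\dlt$, and then $\nrm{\omg_{r,err}(t,\cdot)}_{L^{2}}^{2}=K_{r}(t)-\nrm{\omg_{r,cen}(t,\cdot)}_{L^{2}}^{2}\le K_{r}(0)-(\pi c_{L}^{2}-C\dlt)\le C\dlt$, which is \eqref{eq:omg-r-err-small}.

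The step I expect to be the main obstacle is the separation claim in the first paragraph; everything downstream of it is a faithful transcription of Lemma~\ref{lem:omg-err-small}. Establishing it requires genuinely checking the geometry of the slanted border — that $\{x_{1}=L^{+}(t,x_{2})\}$ stays uniformly (indeed linearly in $t$) behind $\supp\omg_{r,Lamb}(t,\cdot)$, which is where one uses that $L^{+}(t,\cdot)$ propagates strictly more slowly than the Lamb dipole and that $D_{0}$ is large relative to $\alp$ — and combining it with the shift estimate for $\tau(t)$.
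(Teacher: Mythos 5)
Your proposal is correct and is exactly the argument the paper intends: its own proof of this lemma is the single sentence ``The proof is parallel to that of Lemma~\ref{lem:omg-err-small},'' and what you have written is that transcription carried out in detail, including the one genuinely new geometric input (the linear-in-$t$ separation of $\supp\omg_{r,Lamb}(t,\cdot)$ from the slanted region $\{x_1\le L^+(t,x_2)\}$), which you correctly identify as the only step not already present in the $N$-dipole case. The sole caveat is that with the literal definition \eqref{eq:omg-r-err} the line $L^+$ moves at speed $V_{avr}+\tfrac1{10}$, which exceeds the dipole speed $1$ when $V_{max}[\omg_{0l}]/\sin\alp>\tfrac45$, so your separation claim requires reading the increment as $\tfrac1{10}\big(D+(1-V_{avr})t\big)$ (the exact analogue of \eqref{eq:Lpm-def}) rather than $\tfrac1{10}t$ --- a defect of the paper's stated constant, not of your argument.
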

	\begin{proof}
		The proof is parallel to that of Lemma \ref{lem:omg-err-small}. 
	\end{proof}

	\subsection{Closing the bootstrap assumptions}
	
	We now close the bootstrap assumptions: on $[0,T]$ where they are valid, we will prove that (B1') holds with the strict inequality, and (B2') and (B3') hold with $\dlt/100$ instead of $\dlt$. This would complete the proof of Theorem \ref{thm:B}. 
	
	\medskip
	\noindent \textbf{Closing (B1')}. 
	We show that on $[0,T]$, we actually have the strict inequality in (B1'). We estimate the velocity at time $t$ on $\{x_{1} = L(t,x_{2})\}$ by the sum of velocities coming from the following decomposition: \begin{equation*}
		\begin{split}
			\omg = \omg_{l, *} + \omg_{l, gain} + \omg_{r, cen} + \omg_{r, err}. 
		\end{split}
	\end{equation*}
	We denote the corresponding velocities by $u = u_{l, *} + u_{l, gain} + u_{r, cen} + u_{r, err}. $ To begin with, smallness of $u_{l, gain}$ and $u_{r, err}$ comes from Lemma \ref{lem:gain-l-bounds} and \ref{lem:omg-r-err-small}, respectively. Next, the smallness of $u_{r, cen}$ in $\{x_{1} = L(t,x_{2})\}$ follows from \eqref{eq:u-decay} in Lemma \ref{lem:vel-decay}, using the fact that the support of $\omg_{r, cen}$ is separated from this line by $c D$ with some $c>0$. Lastly, smallness of $u_{l, *}$ comes from the observation that $\omg_{l, *}(t,\cdot) \in \calR[\omg_{0 l}]$ and in particular $\nrm{ u_{l, *} (t, \cdot) }_{L^\infty} \le V_{max}[\omg_{0 l}] < \sin \alp$. More precisely, for $v(x):= u_{l, *}(t,x)$, we have for every $x$ the inequality \begin{equation*}
		\begin{split}
			|v_{1}(x) - \cot \alp v_{2}(x)| \le \frac1{\sin\alp} |v(x)| \le \frac1{\sin\alp} V_{max}[\omg_{0 l}] < V_{avr}.  
		\end{split}
	\end{equation*} Since the last inequality is strict, we may take $\varep$ smaller and $D$ larger if necessary, to make sure that (B1') holds again with the strict inequality.

	\medskip
	\noindent \textbf{Closing (B2') and (B3')}. 
	The proofs for replacing $\dlt$ by $\dlt/100$ in (B2') and (B3') are parallel to the case of Theorem \ref{thm:N-Lamb-dipoles} for the rightmost dipole. That is, the arguments to close (B2') and (B3') are almost identical to the proofs of case $i=1$ in Proposition~\ref{prop:impulse-upper-bound} and Proposition~\ref{prop:energy-upper-bound}.

	\bibliographystyle{plain}

\end{document}